\newtheorem{theorem}{Theorem}
\newtheorem{mainconstruction}[theorem]{Main Construction}
\newtheorem{proposition}[theorem]{Proposition}
\newtheorem{lemma}[theorem]{Lemma}
\newtheorem{corollary}[theorem]{Corollary}
\newtheorem{defin}[theorem]{Definition}
\newtheorem{example}[theorem]{Example}
\theoremstyle{remark}
\newtheorem{rem}[theorem]{Remark}
\renewcommand\sout{\bgroup\markoverwith%
{\textcolor{red}{\rule[0.7ex]{3pt}{1.4pt}}}\ULon}
\def\Mod{\mathop{\text{$\mathcal{M}$\kern-1.5pt\textit{od}}}\nolimits}
\def\Ric{\mathop{\mathrm{Ric}}\nolimits}
\def\End{\mathop{\mathrm{End}}\nolimits}
\def\Spin{\mathop{\mathrm{Spin}}\nolimits}
\def\spin{{\mathop{\mathfrak{spin}}}}
\def\SO{\mathop{\mathrm{SO}}\nolimits}
\def\so{{\mathop{\mathfrak{so}}}}
\def\U{\mathop{\mathrm{U}}\nolimits}
\def\SU{\mathop{\mathrm{SU}}\nolimits}
\def\Sp{\mathop{\mathrm{Sp}}\nolimits}
\def\GL{\mathop{\mathrm{GL}}\nolimits}
\def\divv{\mathop{\mathrm{div}}\nolimits}
\def\Diff{\mathop{\mathrm{Diff}}\nolimits}
\def\id{\mathop{\mathrm{id}}\nolimits}
\def\cl{\mathop{\mathrm{cl}}\nolimits}
\def\diag{\operatorname{diag}}    
\def\platz{\,\cdot\,}
\def\Cl{\mathop{\mathrm{Cl}}\nolimits}
\def\CCl{\mathop{\mathbb{C}\mathrm{l}}\nolimits}
\def\pSigma{\Sigma^{\#}}
\def\ppSigma#1{\Sigma_{#1}^{(\#)}}
\let\Si\Sigma
\let\phi\varphi
\let\pa\partial
\newcommand{\univ}{\mathrm{univ}}
\let\ti\tilde   
\let\ol\overline
\let\ep\epsilon
\let\om\omega
\let\si\sigma
\let\theta\vartheta
\let\na\nabla
\def\N{\mathbb{N}}
\def\Z{\mathbb{Z}}
\def\R{\mathbb{R}}
\def\C{\mathbb{C}}
\def\grad{{\rm grad}}
\def\cE{\mathcal{E}}
\def\cF{\mathcal{F}}
\def\cH{\mathcal{H}}
\def\cM{\mathcal{M}}
\def\cW{\mathcal{W}} % For the Wang map
\newcommand{\definedas}{\mathrel{\raise.095ex\hbox{\rm :}\mkern-5.2mu=}}
\newcommand\predash{(pre\nobreakdash-)}
\newcommand\premoduli{\predash mo\-du\-li}
\def\Joker{J^{(\#),(\pm)}}
\newcommand{\II}{\mathrm{I}\hskip-.3mm\mathrm{I}} %Zweite Fundamentalform, einfachere Version
\newcommand{\Or}{\mathcal{O}}
\newcommand{\bean}{\begin{eqnarray*}}
\newcommand{\eean}{\end{eqnarray*}}
\newcommand{\benu}{\begin{enumerate}}
\newcommand{\eenu}{\end{enumerate}}
\newcommand{\eea}{\end{eqnarray}}
\newcommand{\bea}{\begin{eqnarray}}
\renewcommand{\Re}{\mathop{\mathrm{Re}}}
\renewcommand{\Im}{\mathop{\mathrm{Im}}}
\let\mR\R
\let\mC\C
\let\mZ\Z
\let\mN\N
\let\ol\overline
\let\witi\widetilde
\newcounter{mnotecount}[section]
\newcommand{\arxiv}[1]{\href{https://arxiv.org/abs/#1}{arXiv:#1}}
\def\cF{\mathcal{F}}
\title{Construction of initial data sets for Lorentzian manifolds with lightlike parallel spinors}
\author{Bernd Ammann\thanks{B.A. has been partially supported by SFB 1085 Higher Invariants,
Regensburg, funded by the DFG.} 
\and Klaus Kr\"oncke \and Olaf Müller}
\begin{document}

\maketitle

\begin{abstract}
Lorentzian manifolds with parallel spinors are important objects of study in several branches of geometry, analysis and mathematical physics. Their Cauchy problem has recently been discussed by Baum, Leistner and Lischewski, who proved that the problem locally has a unique solution up to diffeomorphisms, 
provided that the intial data given on a space-like hypersurface satisfy some constraint equations. In this article we provide a method to solve these constraint equations. In particular, any curve (resp.\ closed curve) in the moduli space of Riemannian metrics on $M$ with a parallel spinor gives rise to a solution of the constraint equations on $M\times (a,b)$ (resp.\ $M\times S^1$).
\end{abstract}

{\bf Keywords:}
Parallel spinor, Lorentzian manifolds, Riemannian manifolds, special holonomy, moduli spaces

%{\bf MSC:}

%%%%%%%%%%%%%%%%%%%%%%%%%%%%%%%%%% 
%%%%%%%%%%%%%%%%%%%%%%%%%%%%%%%%%%
\section{Introduction}
%%%%%%%%%%%%%%%%%%%%%%%%%%%%%%%%%%
%%%%%%%%%%%%%%%%%%%%%%%%%%%%%%%%%%

%%%%%%%%%%%%%%%%%%%%%%%%%%%%%%%%%%%%%%%%%%%%%%%%%%%%%%%%%%%%%%%%%
\subsection{Parallel spinors}
%%%%%%%%%%%%%%%%%%%%%%%%%%%%%%%%%%%%%%%%%%%%%%%%%%%%%%%%%%%%%%%%%

Let $(\overline N,\bar h)$ be a connected  $(n+1)$-dimensional oriented and time-oriented Lo\-ren\-tzian manifold with a fixed spin structure.
The bundle of complex spinors will be denoted by $\Sigma \overline N$. The spinor bundle carries a natural Hermitian form 
$\llangle\,\cdot\,,\,\cdot\,\rrangle$ of split signature, a compatible connection, and Clifford multiplication. In the present article we search for manifolds with a parallel spinor, i.e.\ a (non-trivial) parallel section of  $\Sigma \overline N$.
Understanding  Lorentzian manifolds with parallel spinors is interesting for several reasons. 

The first reason is that Riemannian manifolds with parallel spinors 
provide interesting structures. We want to briefly sketch some of them:
Parallel spinors provide a powerful technique to obtain Ricci-flat metrics on compact manifolds: All known closed Ricci-flat manifolds carry a non-vanishing parallel spinor on a finite covering.
Parallel spinors are also linked to ``stability'', defined in the sense that the given compact Ricci-flat metric cannot be deformed to a positive scalar curvature metric. This condition in turn is linked to dynamical stability of a Ricci-flat metric under Ricci flow: A compact Ricci-flat metric is dynamically stable under the Ricci flow if and only if it cannot be deformed to a positive scalar curvature metric (\cite{haslhofer.mueller.2014} and \cite[Theorem 1.1]{kroencke.2013}).

 Infinitesimal stability was proven for metrics with a parallel spinor in \cite{wang_m:91} and local stability (for manifolds with irreducible holonomy) in \cite{dai.wang.wei:05}.
% The general case of local stability was proven in \cite{ammann.kroencke.weiss.witt:18}. 
Manifolds with parallel spinors provide interesting moduli spaces, see \cite{ammann.kroencke.weiss.witt:18}. Furthermore parallel spinors help to understand the space of metrics with non-negative scalar curvature. The stability property explained above implies that every homotopy class in the space of positive scalar curvature metrics which is known to be non-trivial also remains non-trivial
in the space of metrics with non-negative scalar curvature, see \cite{schick.wraith:2021}.  %\cite{wraith.2016}.
It is interesting and challenging to see to which extent it is possible to find Lorentzian analogues to these results. 

A second reason to be interested in parallel spinors on arbitrary semi-Rie\-mann\-ian manifolds is that their existence implies that the holonomy is special \cite{leistner.diss,leistner.jdg2007}, \cite{berard-bergery.ikemakhen:93}, \cite{ikemakhen:04}, \cite{boubel:07}, \cite{baum.laerz.leistner:2014}. 
Thus the construction of Lorentzian manifolds with parallel spinors provides examples of manifolds with special holonomy.

A third reason is that parallel spinors are relevant in many fields of theoretical physics. For example parallel spinors on Lorentzian manifolds are often viewed as generators for the odd symmetries in a supersymmetric theory (see e.g. \cite{Farril.2000}). Parallel spinors are also important in several physical theories of Kaluza-Klein type, i.e.\ involving additional compactified dimensions. Mathematically we model them by a semi-Riemannian submersion $\pi:T\to B$ 
from a high-dimensional
Lorentzian manifold $T$ to a  macroscopically visible 3+1-dimensional space-time $B$. In this context one has to make sure that spectral and other analytic properties of Dirac operators on $T$ are comparable to the corresponding properties
on $B$. This requires the existence of harmonic spinors on the fibers $M:=\pi^{-1}(b)$, $b\in B$.
If the scalar curvature assumed to be non-negative, these harmonic spinors are parallel which in turn implies that the fibers are Ricci-flat.
Varying $b$ yields a family of metrics on $M$ with parallel spinors. One thus obtains a map $B\to \Mod_\parallel(M)$, where $\Mod_\parallel(M)$ is the \premoduli{} space of metrics with a parallel spinor on some covering of $M$, which is also a central ingredient in this article.

%%%%%%%%%%%%%%%%%%%%%%%%%%%%%%%%%%%%%%%%%%%%%%%%%%%%%%%%%%%%%%%%%
\subsection{The Cauchy problem for parallel light-like spinors}
%%%%%%%%%%%%%%%%%%%%%%%%%%%%%%%%%%%%%%%%%%%%%%%%%%%%%%%%%%%%%%%%%

Important progress about Lorentzian manifolds with parallel spinors was recently achieved by H.~Baum, T.~Leist\-ner and A.~Li\-schew\-ski \cite{baum.leistner.lischewski:16,lischewski:15-preprint,leistner.lischewski:17}, see also~\cite{baum.leistner.lecture.notes:HH} for associated lecture notes.
In particular, these authors showed the well-posedness of an associated
Cauchy problem which we will now describe in more detail and which will be the main topic of the present article.

Let again $(\ol N,\bar h)$ be a time- and space-oriented Lorentzian spin manifold, and let $\llangle\,\cdot\,,\,\cdot\,\rrangle$ be the Hermitian product on $\Sigma \ol N$ with split signature. The Clifford multiplication on  $(\ol N,\bar h)$ will be denoted by $\bullet$.

Note that for any spinor $\phi$
on a Lorentzian manifold one defines the \emph{Lorentzian Dirac current}
$V_\phi\in \Gamma(T\overline N)$ of $\phi$ 
 by requiring that 
  $$\bar h(X,V_\phi)=-\llangle X\bullet \phi,\phi\rrangle$$
holds for all $X\in T\overline N$. 
Recall that on Lorentzian manifolds 
the Clifford action of vector fields on spinors is symmetric, 
in contrast to the Riemannian case, where it is anti-symmetric.

If $\phi$ is parallel, then $V_\phi$ is 
parallel as well. One can show that~$V_\phi$ is a future oriented causal vector
\cite[Sec.~1.4.2, Prop.~2]{baum.leistner.lecture.notes:HH}. Thus $V_\phi$
is either time-like or light-like everywhere. In the time-like case, the Lorentzian manifold locally splits as a product $(N,h)\times (\mR,-dt^2)$, 
where $(N,h)$ is a Riemannian manifold with a parallel spinor. So with respect to a suitable Cauchy hypersurface the understanding of such metrics directly relies on the corresponding results on Riemannian manifolds. 

In this article we are concerned with the case, that $\phi$ is a parallel spinor with a light-like Dirac-current $V_\phi$. 
This problem was studied in \cite{baum.leistner.lischewski:16}, \cite{lischewski:15-preprint}, and \cite{leistner.lischewski:17}.

\begin{figure}
\begin{center}
\begin{tabular}{|c|c|c|l|c|c|c}
\hline
Manifold & Metric & Dimension & Type & Clifford& Scalar product\\
         &        &           &      & multiplication& on spinors\\
\hline
\hline
%\rule{0pt}{3ex}   
$\overline N$ &$\bar h$ & $n+1$ & Lorentzian &$\bullet$& $\llangle\platz,\platz\rrangle$\rule{0pt}{2.6ex}\\
$N$           & $h$ & $n$   & Riemannian &$\star$ & $\langle\platz,\platz\rangle$ \\
$M$           & $g$ & $m=n-1$ & Riemannian &$\cdot$& $\langle\platz,\platz\rangle$ \\
\hline
\end{tabular}\\[3mm]
\begin{tabular}{|c|c|}
\hline
Submanifold & Weingarten map \\
\hline
\hline
%\rule{0pt}{3ex}   
$N\subset \overline N$ &$\overline{W}$\rule{0pt}{2.6ex}\\
$M\subset N$           & $W$\\
\hline
\end{tabular}
\caption{Notation for manifolds, Clifford multiplications and Weingarten maps}
\end{center}
\end{figure}

Let $N$ be a spacelike hypersurface in this
Lorentzian manifold with induced metric $h$.
For a future-oriented (time-like) normal vector field $T$ with 
$\bar h(T,T)=-1$ we define the Weingarten map $\ol W:=-\bar\nabla T$. We use the symbol~$\ol W$ instead of~$W$, as the latter one 
will be used for the Weingarten map of hypersurfaces $M$ in $N$, 
which will play a central role in our article.
The Hermitian product $\llangle \,\cdot\,,\,\cdot\,\rrangle$ and the Clifford multiplication $\bullet$ on $\Sigma \ol N$ induce a positive definite Hermitian product and a Clifford multiplication $\star$ on $\Sigma \ol N|_N$ 
which can be characterized by the formulas
  $$\<\phi,\psi\>=\llangle T\bullet \phi,\psi\rrangle,\qquad X\star \phi= iT\bullet X \bullet \phi$$
for all $x\in N$, $\phi,\psi\in \Sigma_x \ol N$, $X\in T_xN$.
The spin structure on $\ol N$ induces a spin structure on the 
spacelike hypersurface $N$. Let $\Si N$ be the associated spinor bundle of $N$. 
With standard tools about Clifford modules, one sees 
that there is a bundle monomorphism $\iota:\Sigma N\to \Sigma \ol N|_N$ 
over the identity of $N$ such that the Clifford multiplication and the Hermitian product on $\Si N$ are mapped to $\star$ and $\<\,\cdot\,,\,\cdot\,\>$ on $\Si \ol N|_N$. The bundle monomorphism $\iota$ is a bundle isomorphism if and only if $n$ is even. However note that $\iota$ is not parallel, i.e.\ the connection is not preserved under~$\iota$. More precisely $\nabla \iota$ is a linear pointwise 
expression in $\ol W$. From now on we identify $\Sigma N$ with its image in  $\Si \ol N|_N$ under~$\iota$, taking the connection from~$\Sigma N$. As already mentioned above we will also consider hypersurfaces $M$ of $N$ and the spinor bundle $\Sigma M$ of $M$. However the relation between spinors on~$M$ and on~$N$ is a bit easier than between spinors on~$\ol N$ and on~$N$. One can work with an embedding $\Sigma M$ into $\Sigma N|_M$ which preserves the scalar product $\langle\platz,\platz\rangle$. We can even choose the embedding such that the Clifford multiplications coincide, however in the literature another embedding is often used which does no longer preserve the Clifford multiplication. In our application in Section~\ref{sec.constr.sol} it depends on the parity of $n$ which embedding is more convenient for us, see also Subsection~\ref{subsec.hyp.surf} and Appendix~\ref{sec.hypersurfaces} for further information about hypersurfaces and spinors. Thus we want to use $\cdot$ for the Clifford multiplication on $M$ in contrast to $\star$ which is used for the Clifford multiplication on $N$.
 
For any spinor $\phi\in\Gamma(\Sigma N)$ we associate --- see e.g.\ \cite[(1.7)]{leistner.lischewski:17} --- its Riemannian Dirac current $U_\phi\in \Gamma(TN)$ by requiring 
\begin{equation}\label{def.dirac.current}
h(U_\phi,X)=-i \<X\star \phi,\phi\>,\qquad \forall X\in TN.
\end{equation}
By Lemma~\ref{dirac.curr.prop.equiv} in Appendix~\ref{appendix.indep} we see, that the spinor $\phi$ 
satisfies $h(U_\phi,U_\phi) =\|\phi\|^4$ if and only if we have for any $p\in N$ 
\begin{equation}\label{char.quad.cond}
i\phi(p)\in \{V\star\phi\mid V\in T_pN\}.
\end{equation}
Now, if we assume that 
$(\ol N,\bar h)$ carries a non-vanishing parallel spinor, then this spinor induces a spinor $\phi$ on 
$(N,h)$ such that 
\begin{align}
  \nabla^N_X\phi& = \frac{i}2 \ol W(X)\star \phi,\qquad \forall X\in TN,\label{eq.imag.killing}\\
  U_\phi \star \phi & = i u_\phi\phi,\label{eq.Uphi}
\end{align}
where $U_\phi\in \Gamma(TN)$ is defined as above and where 
\begin{equation}\label{def.small.uphi}
 u_\phi:=\sqrt{h(U_\phi,U_\phi)},
\end{equation} 
see \cite[(4) and following]{baum.leistner.lischewski:16} or \cite[(1.6) and (1.8)]{leistner.lischewski:17}. 

Note that the constraint equations \eqref{def.dirac.current}--\eqref{def.small.uphi} are not independent from each others. We comment on this in Appendix~\ref{appendix.indep}.

Conversely, if $(N,h)$ is given as an abstract Riemannian spin manifold, and 
if $\ol W\in \Gamma(\End(TN))$, and $\phi\in \Gamma(\Sigma N)$ satisfy 
\eqref{char.quad.cond}, \eqref{eq.imag.killing} and \eqref{eq.Uphi} with~$U_\phi$ and~$u_\phi$ defined
by \eqref{def.dirac.current} and \eqref{def.small.uphi},
then there is a Lorentzian manifold $(\ol N,\bar h)$ with $N$ as a Cauchy hypersurface and a parallel spinor, such that $h$ is the induced Riemannian metric, $\ol W$ the Weingarten map and $\phi$ is induced by a parallel spinor on  $(\ol N,\bar h)$, see \cite[Consequence of Theorems~2 and~3]{leistner.lischewski:17}.
This was proven by solving the associated wave equations by using the technique of symmetric hyperbolic systems. A simpler approach, going back to a remark 
by P. Chrusciel was later given in \cite[Chap.~4]{seipel:19}.

The question arises on how to solve these constraint equations. In the present article 
we will describe a new method to obtain solutions of these constraint equations. 
We will see how any smooth curve in the \premoduli{} space of closed $m=(n-1)$-dimensional Riemannian manifolds with a parallel spinor together with a scaling function yields a solution to the constraint equations, see our Main Construction~\ref{mainconstrone} in Section~\ref{sec.curves.moduli}. And thus the well-definedness of the Cauchy problem implies the existence of an associated $(n+1)$-dimensional Lorentzian with a parallel spinor.  
Such a relation between families of metrics with special holonomy and solutions of the constraint equations was already conjectured by Leistner and Lischewski, see \cite{leistner.lischewski:17}. We essentially show that the conditions in 
\cite[Table 1]{leistner.lischewski:17} is satisfied if and only if the divergence condition \eqref{div.equation} in our Appendix~\ref{app.div.free} is satisfied.

We also derive versions of this construction to obtain initial data on a compact Cauchy hypersurface (without boundary). A first idea is to use a smooth closed
curve in the \premoduli{} space of closed $m$-dimensional Riemannian manifolds with a parallel spinor together with a scaling function. However this will in general not lead to a solution of the original constraint equation, but to a twisted version thereof, see Main Construction~\ref{mainconstrthree}.  This will lead to a Lorentzian manifold with a parallel twisted spinor. 
Here the twist bundle is always a complex line bundle with a flat connection.

In special cases -- more precisely assuming the ``fitting condition'' introduced in Section~\ref{sec.curves.moduli} we however obtain solutions of the constraint equations in the original (i.e. untwisted) sense, see Main Construction~\ref{mainconsttwo}.

We consider it as remarkable that compared to other classical diffeomorphism invariant Cauchy problems in Lorentzian geometry, e.g. the Cauchy problem for Ricci-flat metrics, we get a large quantity of solutions to the constraint equations. Furthermore it is amazing that the solutions in our situation correspond to curves in the moduli space, while the set of solutions of the constraints in classical problems have no similar description. An important input for our article was the smooth manifold structure of the premoduli space $\Mod_\parallel(M)$ and the fact that the BBGM connection preserves parallel spinors along divergence free Ricci-flat families of metrics. Ricci-flat deformations of a metric thus preserve the dimension of the space of parallel spinors. The infinitesimal version of this should be seen as some kind of Hodge theory: infinitesimal Ricci-flat deformations can be viewed as elements of $\ker (D^{T^*M})^2$ while infinitesimal deformations with parallel spinors can be viewed as elements of $\ker D^{T^*M}$ --- and on compact manifolds Hodge theory tells us that $\ker (D^{T^*M})^2=\ker D^{T^*M}$. 
Let us also compare the results of this paper to the tightly related recent preprint \cite{ammann.gloeckle:21p}. While the current paper constructs initial data for Lorentzian manifolds with a parallel spinor, one of the major themes in \cite{ammann.gloeckle:21p} is to get topological obstructions for a closed spin manifold to be a spacelike hypersurface in a Lorentzian manifold with a parallel spinor. We also mentioned that the special case of $(3+1)$-dimensional Lorentzian manifolds (i.e.\ initial data on $3$-dimensional manifolds) was studied in the recent preprint \cite{murcia.shabazi:20p}.

The structure of the article is as follows. In 
Section~\ref{sec.prelim} we fix some conventions, recall and extend known facts about Clifford modules and about spinors on hypersurfaces. We also explain how to differentiate a spinorial expression such as the Levi-Civita derivative $\na^g \phi$ of a spinor $\phi$ with respect to the Riemannian metric $g$.
The known results for defining this differential along a path of metrics is recalled in Subsection~\ref{subseq.bbgm}, this includes the BBGM parallel transport and the associated connection which arises from the universal spinor bundle construction; the concrete calculations are a central tool of the article and are 
carried out in Section~\ref{var.par.spinor.eq}. In Section~\ref{sec.par.preserved} we show that for divergence free Ricci-flat deformations, the BBGMconnection preserves parallel spinors, a result which we consider an interesting outcome of the article independently of the application to the constraint equations, mainly discussed in the article. In Section~\ref{sec.constr.sol} we use this construction to obtain solutions of the constraint equations \eqref{def.dirac.current} -- \eqref{def.small.uphi}. In Section~\ref{sec.curves.moduli} we establish the connection to curves in \premoduli{} spaces and we also discuss on how to obtain solutions on compact Cauchy hypersurfaces. The article ends with several appendices where we provide some details about facts which are already well-known, but where adequate literature was not available. We hope that these appendices help to make the article sufficiently self-contained.

{\bf Acknowledgements.} We thank Helga Baum, Thomas Leistner and Andree Lischewski for bringing our attention to this problem and for enlightening talks and discussions. 
Our special thank goes to Andree Lischewski for sharing the above mentioned conjecture with us, already at an early stage. We also thank both referees for many good and substantial comments.

%%%%%%%%%%%%%%%%%%%%%%%%%%%%%%%%%%
%%%%%%%%%%%%%%%%%%%%%%%%%%%%%%%%%%
\section{Preliminaries}\label{sec.prelim}
%%%%%%%%%%%%%%%%%%%%%%%%%%%%%%%%%%
%%%%%%%%%%%%%%%%%%%%%%%%%%%%%%%%%%

%%%%%%%%%%%%%%%%%%%%%%%%%%%%%%%%%%
\subsection{Conventions}\label{conventions}
%%%%%%%%%%%%%%%%%%%%%%%%%%%%%%%%%%

All Hermitian scalar products in this article are complex
linear in the first entry and complex anti-linear in the second one.
Let $E$ be a Riemannian vector bundle over a compact Riemannian manifold $(M,g)$ equipped with a metric connection $\nabla$ and $k\in \N_0$. Then the Sobolev norm $H^k$ of a section $s\in \Gamma(E)$ is
\begin{align*}
\left\|s\right\|_{H^k}=\left(\sum_{l=0}^k\int_M |\nabla^ls|^2dV\right)^{1/2}.
\end{align*}
where $dV$ is the volume element of $g$. As usual, we denote $L^2=H^0$. We will write $H^k(g)$ if we wish to emphasise the dependence of the norm on $g$.

We use the symbol $\bigodot^2$ for the symmetrized tensor product, i.e.\ 
for a finite-dimensional real vector space $V$ the space of symmetric bilinear
forms $V\times V\to\R$ is denoted by $\bigodot^2V^*$.
Let $S V^*\subset  \bigodot^2V^*$ be the subset of positive definite symmetric forms on $V$. Applying this fiberwise to the tangent bundle $TM$ we obtain the vector bundle $\bigodot^2 T^*M$ and the bundle $ST^*M=: S M$.
 
\begin{defin}
On a Riemannian manifold $M$ the Einstein operator is the elliptic 
differential operator 
$\Delta_E:\Gamma(\bigodot^2 T^*M) \to \Gamma(\bigodot^2 T^*M)$, given by
\begin{align*}
\Delta_Eh=\nabla^*\nabla h-2\mathring{R}h,\qquad \mathring{R}h(X,Y):=h(R_{e_i,X}Y,e_i)
\end{align*}
where $\left\{e_1,\ldots e_n\right\}$ is a local orthonormal frame.
Here, the curvature tensor is defined with the sign convention such that $R_{X,Y}Z=\nabla^2_{X,Y}Z-\nabla^2_{Y,X}Z$.
\end{defin}
The Einstein operator is linked to the deformation theory of Ricci-flat metrics as follows: Let $g$ be a Ricci-flat metric and $g_s$ a smooth family of Ricci-flat metrics through $g_0=g$, then
\begin{align*}
\frac{d}{ds}\Big|_{s=0}g_s=h+\mathcal{L}_Xg
\end{align*}
where $X$ is a vector field and $h\in \ker({\divv_g})\cap\ker(\Delta_E)$, i.e.\ the essential part of a Ricci-flat deformation is an element in $\ker(\Delta_E)$. In particular, if $g_s$ is a family of metrics with a parallel spinor, the essential part of its $s$-derivative is contained in $\ker(\Delta_E)$.
For more details on the deformation theory of Ricci-flat metrics, see \cite[Chapter 12 D]{besse:87}.

%%%%%%%%%%%%%%%%%%%%%%%%%%%%%%%%%%%%%%%%%%%%%%%%%%%%%%%%%%
\subsection{Some facts about Clifford modules}\label{sec.Clifford.modules}
%%%%%%%%%%%%%%%%%%%%%%%%%%%%%%%%%%%%%%%%%%%%%%%%%%%%%%%

In this subsection we briefly summarize some facts about representations
of Clifford algebras. The interested reader might consult the first Chapter of the book by Lawson and Michelsohn \cite{lawson.michelsohn:89} for further details.

Let $(E_1,\ldots,E_n)$ be the canonical basis of $\R^n$. The complexified Clifford algebra for  $\R^n$ with the Euclidean scalar product will be denoted by $\CCl_n$. In this section all Clifford multiplications are written as $\cdot$, independently on $n$; we use the convention that $\cdot$ does not depend on $n$.
In the following section we will then explain that the associated bundle construction then turns this Clifford multiplication both into the Clifford multiplication $\cdot$ on $M$ and the Clifford multiplication $\star$ on $N$.

We define
the complex volume element $\omega^\C_n$ as
$$\omega^\C_n:= \epsilon_n E_1\cdot E_2 \cdot \ldots \cdot E_n\in \CCl_n,$$
where $\epsilon_{4k}=\epsilon_{4k+3}=1$, $\ep_{4k+1}=-i$ and $\ep_{4k+2}=i$.
%Old sign $$\omega^\C_n:= (-i)^{n(n+1)/2} e_1\cdot e_2 \cdots e_n\in \CCl_n,$$
%% Very old sign $$i^{n(n+1)/2} e_1\cdot e_2 \cdots e_n\in \CCl_n,$$
The choices imply $(\omega^\C_n)^2=1$. The choice of sign for $\epsilon_n$ in the literature varies between different sources, our particular and unconventional 
choice yields an easy formulation of Lemma~\ref{lem.algeb.repr}.

For~$n$ even, there is a unique irreducible complex represention of $\CCl_n$. 
Here and in the following ``unique'' will always ``unique up to isomorphism of representation''. 
This unique representation will be denoted by $\Sigma_n$. 
It comes with a grading $\Sigma_n=\Sigma_n^+ \oplus \Sigma_n^-$ 
given by the eigenvalues $\pm 1$ of $\omega^\C_n$.

For~$n$ odd there are two irreducible representations, and as
$\omega^\C_n$ is central, Schur's Lemma implies that $\omega^\C_n$ acts either as the identity or minus the identity which allows us to distinguish
the two representations. 
For $n$ odd,
we will assume that $\Sigma_n$ is the irreducible representation on which 
$\omega^\C_n$ acts as the identity.
The other representation is called $\pSigma_n$. 
In the following $\ppSigma{n}$ denotes either  $\Sigma_n$ or $\pSigma_n$ 
for odd $n$ and  $\Sigma_n$ for even $n$.

The standard inclusion $\R^{n-1}\to \R^n$, $x\mapsto (x,0)$
induces an inclusion $\CCl_{n-1}\to \CCl_n$. This turns $\Sigma_n$
into a $\CCl_{n-1}$-module. In particular, the Clifford action of $X\in \R^{n-1}$
is the same if $X$ is viewed as an element of $\CCl_{n-1}$ or $\CCl_n$.
\footnote{Here we do not follow the convention chosen by Bär,
Gauduchon and Moroianu \cite{baer.gauduchon.moroianu:05}, where vectors
in $\R^{n-1}$ have two different Clifford actions on  $\Sigma_n$, depending on whether it is seen as an element in  $\CCl_{n-1}$ or $\CCl_n$-action.}

Thus, if $n$ is odd,  we can choose (and fix from now on) 
isometric isomorphisms of $\CCl_{n-1}$-modules $J_n:\Sigma_{n-1}\to \Sigma_n$
and $J_n^\#:\Sigma_{n-1}\to \pSigma_n$.
If $n$ is even, then we can choose (and fix from now on) 
an isometric isomorphism of $\CCl_{n-1}$-modules 
$I_n:\Sigma_{n-1}\oplus\pSigma_{n-1}\to\Sigma_n$.

\begin{lemma}\label{lem.algeb.repr}
For $n$ odd, $J_n:\Sigma_{n-1}\to \Sigma_n$ and $J_n^{\#}:\Sigma_{n-1}\to \pSigma_n$
are $\CCl_{n-1}$-linear isomorphisms of complex $\Spin(n-1)$-representations and they satisfy
\begin{eqnarray}
 e_n\cdot J_n(\phi) &=& i J_n(\omega_{n-1}^{\C}\phi),\label{J.id}\\
 e_n\cdot J_n^{\#}(\phi) &=& - i J_n^{\#}(\omega_{n-1}^{\C}\phi).\label{J.id.mod}
\end{eqnarray}
for all $\phi\in \Sigma_{n-1}$. 
For $n$ even there are isometric 
monomorphisms of complex $\Spin(n-1)$-representations
$J_n^{\pm}:\Sigma_{n-1}\to \Sigma_n$ and $J_n^{\#,\pm}:\pSigma_{n-1}\to \Sigma_n$, 
satisying
\begin{eqnarray}
 e_n\cdot J_n^{\pm}(\phi) &=& \pm i J_n^{\pm}(\phi),\label{J.id.even.0}\\
 e_n\cdot J_n^{\#,\pm}(\phi) &=& \pm i J_n^{\#,\pm}(\phi).\label{J.id.mod.even.0}
\end{eqnarray}
for all $\phi\in \Sigma_{n-1}^{(\#)}$  and for any $\pm\in\{+,-\}$.
\end{lemma}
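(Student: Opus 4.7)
The plan is to treat the odd and even cases separately, since their proofs rest on quite different algebraic mechanisms.

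For $n$ odd I would first establish, directly from the definitions of $\omega_n^\C$ and $\ep_n$, the identity
\begin{equation*}
\omega_n^\C \;=\; -i\,\omega_{n-1}^\C\cdot e_n\qquad\text{in }\CCl_n.
\end{equation*}
Writing $\omega_n^\C=(\ep_n/\ep_{n-1})\omega_{n-1}^\C\cdot e_n$ and checking the two odd residues $n=4k+1$ (where $\ep_n/\ep_{n-1}=-i/1$) and $n=4k+3$ (where $\ep_n/\ep_{n-1}=1/i=-i$) shows that the prefactor is uniformly $-i$. Since $n-1$ is even, $\omega_{n-1}^\C$ is a product of an even number of generators and commutes with $e_n$. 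Now $\omega_n^\C$ acts as $+1$ on $\Sigma_n$, so for $\phi\in\Sigma_{n-1}$ the $\CCl_{n-1}$-linearity of $J_n$ gives $J_n(\phi)=\omega_n^\C J_n(\phi)=-i\,e_n\cdot J_n(\omega_{n-1}^\C\phi)$; substituting $\phi\mapsto\omega_{n-1}^\C\phi$ and using $(\omega_{n-1}^\C)^2=1$ yields \eqref{J.id}. Repeating the argument with $\omega_n^\C$ acting as $-1$ on $\pSigma_n$ gives \eqref{J.id.mod}.

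For $n$ even I would define the four maps by projecting onto the eigenspaces of $e_n$: since $e_n^2=-1$, the operators $p^\pm:=\tfrac12(1\mp i\,e_n)$ are the projections onto the $(\pm i)$-eigenspaces of $e_n$, and one sets
\begin{equation*}
J_n^\pm(\phi):=\sqrt 2\,p^\pm I_n(\phi,0),\qquad J_n^{\#,\pm}(\psi):=\sqrt 2\,p^\pm I_n(0,\psi),
\end{equation*}
using the previously fixed $I_n$. The eigenvalue identities \eqref{J.id.even.0} and \eqref{J.id.mod.even.0} are then built into the definition. The two algebraic facts needed for everything else both rest on the parity of the number of generators in $\omega_{n-1}^\C$ (odd for $n$ even): first, $e_n$ anti-commutes with $\omega_{n-1}^\C$, so $e_n$ swaps the $(\pm 1)$-eigenspaces of $\omega_{n-1}^\C$ inside $\Sigma_n$, which are precisely $I_n(\Sigma_{n-1}\oplus 0)$ and $I_n(0\oplus\pSigma_{n-1})$; second, $e_n$ commutes with every product $E_iE_j$ with $i,j<n$, and hence so does $p^\pm$.

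Injectivity and isometry then follow from the decomposition
\begin{equation*}
p^\pm I_n(\phi,0)\;=\;\tfrac12 I_n(\phi,0)\;\mp\;\tfrac{i}{2}\,e_n\cdot I_n(\phi,0),
\end{equation*}
whose two summands lie in orthogonal $\omega_{n-1}^\C$-eigenspaces and have equal norm (because $e_n$ acts unitarily); this gives $\|J_n^\pm(\phi)\|=\|\phi\|$, and vanishing of $J_n^\pm(\phi)$ forces both summands to be zero, hence $\phi=0$. The same argument works for $J_n^{\#,\pm}$. Finally, $\Spin(n-1)$-equivariance drops out of the second algebraic observation above, because $\Spin(n-1)$ is generated by the products $E_iE_j$, $i,j<n$, and these commute with $p^\pm$.

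The only step that I expect to require genuine care is the sign bookkeeping of the identity $\omega_n^\C=-i\,\omega_{n-1}^\C\cdot e_n$: this is precisely where the somewhat unconventional choice of $\ep_n$ fixed at the beginning of the subsection pays off, producing a coefficient that is independent of the residue of $n$ modulo $4$ and thus yielding identities \eqref{J.id}--\eqref{J.id.mod.even.0} in the clean form stated.
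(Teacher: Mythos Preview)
Your proof is correct and follows essentially the same route as the paper. For $n$ odd you derive the same key identity $\omega_n^\C=-i\,\omega_{n-1}^\C\cdot e_n$ and use it exactly as the paper does; for $n$ even your projection formula $J_n^\pm(\phi)=\sqrt 2\,p^\pm I_n(\phi)$ unwinds to the paper's definition $\tfrac{1}{\sqrt 2}\bigl(I_n(\phi)\mp i\,e_n\cdot I_n(\phi)\bigr)$, and your orthogonality argument via the $\omega_{n-1}^\C$-eigenspaces and your $\Spin(n-1)$-equivariance argument via commutation with the even part of $\CCl_{n-1}$ match the paper's reasoning word for word in substance.
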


\begin{proof}
If $n$ is odd, then we see that $\epsilon_n=-i\epsilon_{n-1}$, thus
$\omega_n^\C=-i\omega_{n-1}^\C\cdot e_n=-ie_n\cdot \omega_{n-1}^\C$. This implies 
$e_n\cdot \omega_n^\C=i\omega_{n-1}^\C$. Hence, we obtain \eqref{J.id}:
 $$e_n\cdot J_n(\phi)= e_n\cdot \omega_n^\C\cdot J_n(\phi)=i\omega_{n-1}^\C J_n(\phi)=J_n(i\omega_{n-1}^\C \phi).$$
The verification of \eqref{J.id.mod} is analogous.
As these morphisms are $\CCl_{n-1}$-linear, and as $\Spin(n-1)$ is contained in  $\CCl_{n-1}$, the morphisms are also $\Spin(n-1)$-equivariant.

If $n$ is even, then  we define
  $$J_n^{(\#),\pm}(\phi):= \frac{1}{\sqrt 2} \left(I_n(\phi)\mp i  e_n\cdot I_n(\phi)\right).$$
Equations~\eqref{J.id.even.0} and \eqref{J.id.mod.even.0} are obvious.
The morphisms are no longer $\CCl_{n-1}$-linear. However $e_n$ commutes with 
$(\CCl_{n-1})_0$ which is defined as the even part of $\CCl_{n-1}$, i.e.\ the sub-algebra generated by elements of the form $X\cdot Y$ with $X,Y\in \R^{n-1}$.
As a consequence the morphisms are $(\CCl_{n-1})_0$-linear.  
As $\Spin(n-1)$ is contained in  
$(\CCl_{n-1})_0$, the morphisms are $\Spin(n-1)$-equivariant.

To show that the morphisms $J_n^{(\#),\pm}$ are isometric 
(and thus also injective) 
it suffices to show $I_n(\phi) \perp i  e_n\cdot I_n(\phi)$, where $\phi\in \Sigma_{n-1}$ or  $\phi\in \pSigma_{n-1}$. We argue only for $\phi\in \Sigma_{n-1}$, 
as the other case is completely analogous. As $\omega_{n-1}^\C$ and $e_n$ 
anticommute, $i e_n\cdot I_n(\phi)$ is an eigenvector of $\omega_{n-1}^\C$
to the eigenvalue $-1$, while $I_n(\phi)$ is an eigenvector to the eigenvalue 
$1$. As $\omega_{n-1}^\C$ is self-adjoint, orthogonality follows.
\end{proof}

\begin{rem}
Let $n$ be even. 
Clifford multiplication with $e_n$ anticommutes with all odd elements 
in  $\CCl_{n-1}$, in particular with $\omega_{n-1}^{\C}$ and vectors 
$\R^{n-1}\subset \CCl_{n-1}$.
Thus, the map $\phi\mapsto I_n^{-1}(e_n\cdot I_n(\phi))$
restricts to a vector space isomorphism $A:\Sigma_{n-1}\to \pSigma_{n-1}$
which anticommutes with Clifford multiplication by vectors in~$\R^{n-1}$.
\end{rem}

%%%%%%%%%%%%%%%%%%%%%%%%%%%%%%%%%%%%%%%%%%%%%%%%%%%%%%%%%% 
\subsection{Spinors on hypersurfaces}\label{subsec.hyp.surf}
%%%%%%%%%%%%%%%%%%%%%%%%%%%%%%%%%%%%%%%%%%%%%%%%%%%%%%%%%%%

In this section we want to describe how one can restrict a
spinor on an $n$-dimensional Riemannian spin manifold $(N,h)$ to a spinor on an oriented hypersurface $M$ carrying the induced metric $g$. As, this
restriction is local, we can assume --- by restricting to a tubular neighborhood of $M$ and using Fermi coordinates, i.e.\ normal coordinates in normal directions --- that  $N= M\times (a,b)$ and  $h=g_s+ds^2$ where $s\in (a,b)$.

Conversely, given a family of metrics $g_s$ and spinors $\phi_s\in \Gamma(\Sigma^{g_s}M)$, $s\in (a,b)$, we want to obtain a spinor on $N:= M\times (a,b)$, $h=g_s+ds^2$.

Note that this description 
does not include the passage from a Lorentzian manifold to a spacelike hypersurface and vice versa, described in the introduction, which plays an important
role in the work of Baum, Leistner and Li\-schew\-ski. For this Lorentzian version 
analogous techniques are well presented in the lecture notes \cite[Sec.~1.5]{baum.leistner.lecture.notes:HH}.

For the Riemannian case, which is the goal of this section, a standard reference
is \cite{baer.gauduchon.moroianu:05}. However for the purpose of our article
it seems to be more efficient to choose some other convention, following
e.g.~\cite[Sec.~5.3]{ammann:habil}. To have a fluently readable summary, we do not include detailed proofs in this section. For self-containedness 
we include them in Appendix~\ref{sec.hypersurfaces}.

In the following let $T_MN$ be the vertical bundle of the projection $N\to (a,b)$, i.e.\
$T_MN=\coprod_{s\in (a,b)}T(M\times\{s\})$, viewed as a bundle over $N$. 

For each $s\in I:=(a,b)$ let $P_{\SO}(M,g_s)$ be the $\SO(n-1)$-principal bundle of positively oriented orthornormal frames on~$(M,g_s)$, and let $P_{\SO}(N)$
be defined as the corresponding $\SO(n)$-bundle over $(N,h)$.
We note $\nu:=\partial/\partial s$.
The union of the bundles  $P_{\SO}(M,g_s)$ is an $\SO(n-1)$-principal bundle over $N=M\times I$, which we will denoted by $P_{\SO,M}(N)\to N$ (whose topology and bundle structure is induced by the $\SO(n-1)$-reduction of the $\GL(n-1)$-principal bundle of all frames over $N$ containing $\nu$). We get a map $P_{\SO,M}(N)\to P_{\SO}(N)$, mapping $(e_1,\ldots,e_{n-1})$ to  $(e_1,\ldots,e_{n-1},\nu)$ which yields an isomorphism  $P_{\SO}(N)\cong P_{\SO,M}(N)\times_{\SO(n-1)}\SO(n)$.
 Any topological spin structure on $M$ yields a $\Spin(n-1)$-principal bundle $P_{\Spin,M}(N)$ with a  $\Spin(n-1)\to\SO(n-1)$-equivariant map  $P_{\Spin,M}(N)\to P_{\SO,M}(N)$ in the usual way.  This map induces   
  $$P_{\Spin}(N):= P_{\Spin,M}(N)\times_{\Spin(n-1)}\Spin(n)\to P_{\SO}(N)\cong P_{\SO,M}(N)\times_{\SO(n-1)}\SO(n)$$ 
which is a spin structure on $N=M\times I$. In particular this induces a bijection
from the set of spin structures on $M$ (up to isomorphism) to spin structures on~$N$ (up to isomorphism).

The complex volume elements $\om_{n-1}^\C\in \CCl_{n-1}$ and  $\om_n^\C\in \CCl_n$ then provide associated complex volume elements
  $$\om_M^\C =\epsilon_{n-1} e_1\cdot e_2 \cdot \ldots \cdot e_{n-1}\in \CCl(TM)\text{ and }\om_N^\C=\epsilon_n e_1\star e_2 \star \ldots \star e_n\in \CCl(TN),$$
where $(e_1,e_2,\ldots)$ is any positively oriented basis in $TM$ resp.\ $TN$.

Using the associated bundle construction, we obtain for $n$ odd:
\begin{align*}
  &\Sigma_M N:=  P_{\Spin,M}(N)\times_{\Spin(n-1)}\Sigma_{n-1}=\coprod_{s\in (a,b)} \Sigma^{g_s} M,\\ 
%  &\pSigma_M N:=  P_{\Spin,M}(N)\times_{\Spin(n-1)}\pSigma_{n-1}=\coprod_{s\in (a,b)} \Sigma^{\#,g_s} M,\\
  &\Sigma N:=  P_{\Spin}(N)\times_{\Spin(n)}\Sigma_{n},\\
  &\pSigma N:=  P_{\Spin}(N)\times_{\Spin(n)}\pSigma_{n}.
\end{align*}
The grading $\Sigma_{n-1}=\Sigma_{n-1}^+ \oplus \Sigma_{n-1}^-$  yields a grading $\Sigma_M N=(\Sigma_M N)^+\oplus (\Sigma_M N)^-$ which is the grading by eigenvalues $\pm 1$ of $\omega_M^\C$.

For $n$ even we have
\begin{align*}
  &\Sigma_M N:=  P_{\Spin,M}(N)\times_{\Spin(n-1)}\Sigma_{n-1}=\coprod_{s\in (a,b)} \Sigma^{g_s} M,\\ 
  &\pSigma_M N:=  P_{\Spin,M}(N)\times_{\Spin(n-1)}\pSigma_{n-1}=\coprod_{s\in (a,b)} \Sigma^{\#,g_s} M,\\
  &\Sigma N:=  P_{\Spin}(N)\times_{\Spin(n)}\Sigma_{n},\\
%  &\pSigma M:=  P_{\Spin}(N)\times_{\Spin(n)}\pSigma_{n}.
\end{align*}
and the grading $\Sigma_n=\Sigma_n^+ \oplus \Sigma_n^-$ yields a grading $\Sigma N=(\Sigma N)^+ \oplus (\Sigma N)^-$, given by eigenvalues $\pm 1$ of $\omega_N^\C$.

The Clifford multiplications $\R^{n-1}\otimes \Sigma_{n-1}^{(\#)}\to  \Sigma_{n-1}^{(\#)}$,
$\R^n\otimes \Sigma_n^{(\#)}\to  \Sigma_n^{(\#)}$, induce Clifford multiplications
 $\cdot:T_MN\otimes \Sigma_M^{(\#)}N\to  \Sigma_M^{(\#)}N$, $\star:TN\otimes \Sigma^{(\#)} N\to  \Sigma^{(\#)} N$. As indicated before we use the symbols ``$\cdot$'' and ``$\star$'' to distinguish properly between the Clifford multiplication of $M$ and the one of $N$. 
%If $n$ is odd resp.\ even, we similarly get  $\star:TN\otimes \pSigma N\to  \pSigma N$ resp.\ $\cdot:T_MN\otimes \pSigma_MN\to  \pSigma_MN$.

Furthermore the maps $J_n$, $J_n^\#$, $J_n^{\pm}$, and $J_n^{\#,\pm}$  from Lemma~\ref{lem.algeb.repr} induce vector bundle maps over $\id:N\to N$ which are isometric injective $\C$-linear maps 
in each fiber. For $n$ odd, we obtain fiberwise isomorphisms
\begin{align*}
  &J:\Sigma_MN\to \Sigma N\text{ and }J^{\#}:\Sigma_MN\to \pSigma N.
\end{align*}
These maps commute with Clifford multiplication by vectors tangent to $M$.
They satisfy
\begin{eqnarray}
 \nu\star J(\phi) &=& i J_n(\omega_M^{\C}\cdot \phi),\label{J.id.N}\\
 \nu\star J^{\#}(\phi) &=& - i J^{\#}(\omega_M^{\C}\cdot\phi).\label{J.id.mod.N}
\end{eqnarray}
for all $\phi\in \Sigma_MN$.

On the other hand we get for $n$ even
\begin{align*}
  &J^\pm:\Sigma_M N\to \Sigma N\text{ and }J^{\#,\pm}:\pSigma_MN\to \Sigma N.
\end{align*}

These maps \emph{do not} commute with Clifford multiplication by vectors tangent to $M$, and they \emph{are not} surjective in any fiber.
However, they satisfy
\begin{eqnarray}
 \nu\star J^{\pm}(\phi) &=& \pm i J^{\pm}(\phi),\label{J.id.even}\\
 \nu\star J^{\#,\pm}(\phi) &=& \pm i J^{\#,\pm}(\phi).\label{J.id.mod.even}
\end{eqnarray}
for all $\phi\in \Sigma_MN$ and for any $\pm\in\{+,-\}$.

%All these morphisms are parallel, or equivalently stated: they commute with $\nabla_X$ for an arbitrary vector field $X\in \Gamma(TN)$. (??)

The bundle maps defined above do not preserve the (partially defined) 
Levi-Civita connections on the bundles.
They modify the connection by terms depending on the second fundamental form of $M\times \{s\}$ in $N$. This is made precise in the following lemma.

\begin{lemma}\label{lem.hyperflaechenformel}
Let $\Joker$ be one of the bundle maps defined above.
For $X\in \Gamma(T_MN)$, and $\phi\in \Gamma(\Sigma_MN)$ resp.\ $\Gamma(\pSigma_MN)$ we have
\begin{equation}\label{bernd.habil.formel}
  \nabla^N_X\Joker(\phi) = \Joker(\nabla^{M,g_s}_X\phi) + \frac12 W(X)\star \nu \star \Joker(\phi),
\end{equation}
where the Weingarten map $W$ is defined through 
$g(W(X),Y)\nu=\nabla_X^NY-\nabla_X^M Y$.
\end{lemma}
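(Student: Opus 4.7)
The plan is to reduce \eqref{bernd.habil.formel} to a pointwise comparison in a carefully adapted orthonormal frame and then to read off both sides from the standard connection-form expression for the spin connection. Fix $s\in I$ and a point $p\in M\times\{s\}\subset N$, and pick a local orthonormal frame $(e_1,\dots,e_{n-1})$ of $(T(M\times\{s\}),g_s)$ around $p$ which is synchronous at $p$, that is $\nabla^{M,g_s}_X e_i|_p = 0$ for every $X\in T_pM$. Extending by $e_n:=\nu$ gives a local orthonormal frame of $(TN,h)$. From the defining relation $g(W(X),Y)\nu = \nabla^N_X Y - \nabla^M_X Y$ for tangential $Y$ and from differentiating $h(Y,\nu)=0$ along $X$, I would derive that at $p$
$$\nabla^N_X e_i = g(W(X),e_i)\,\nu \quad (i<n), \qquad \nabla^N_X \nu = -W(X).$$
Consequently the connection one-forms $\omega^N_{ij}(X):=h(\nabla^N_X e_i,e_j)$ vanish at $p$ for $i,j<n$ (thanks to the synchronous choice) and otherwise reduce to $\omega^N_{in}(X) = g(W(X),e_i) = -\omega^N_{ni}(X)$.

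Next I would write $\phi = [\tilde e^M,\sigma]$ and $\Joker(\phi) = [\tilde e^N,\Joker(\sigma)]$, where $\tilde e^N$ is the canonical lift of $(e_1,\dots,e_n)$ induced by the inclusion $P_{\Spin,M}(N)\hookrightarrow P_{\Spin}(N)$. The standard local expression $\nabla\Psi = d\Psi + \tfrac{1}{2}\sum_{i<j}\omega_{ij}\,e_i\cdot e_j\cdot\Psi$ for the spin connection applied to $\Joker(\phi)$ then yields
$$\nabla^N_X\Joker(\phi) = [\tilde e^N,X(\Joker(\sigma))] + \tfrac{1}{2}\sum_{1\le i<j\le n}\omega^N_{ij}(X)\,e_i\star e_j\star\Joker(\phi).$$
Because of the synchronous choice the first summand matches $\Joker(\nabla^{M,g_s}_X\phi)$ at $p$, \emph{provided} $\Joker$ intertwines the relevant Clifford action: in the odd-$n$ case this is immediate from the full $\CCl_{n-1}$-linearity of $J$ and $J^\#$ stated in Lemma~\ref{lem.algeb.repr}; in the even-$n$ case one uses that $J^{\pm}$ and $J^{\#,\pm}$ are $(\CCl_{n-1})_0$-linear, which is enough since $\spin(n-1)\subset(\CCl_{n-1})_0$.

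It remains to collect the normal-direction terms. At $p$ only the mixed components $\omega^N_{in}$ survive; using $e_i\star\nu = -\nu\star e_i$ the sum becomes
$$\tfrac{1}{2}\sum_{i<n} g(W(X),e_i)\,e_i\star\nu\star\Joker(\phi) = \tfrac{1}{2}\,W(X)\star\nu\star\Joker(\phi),$$
which is precisely the asserted correction term. Since $p$ and $s$ were arbitrary, this proves \eqref{bernd.habil.formel}. The main point requiring care is the even-dimensional case, where I must verify that $\Joker$ intertwines the tangential $\spin(n-1)$-part of the two connections despite not being fully $\CCl_{n-1}$-linear; the rest is a routine unravelling of sign conventions, most importantly $\nabla^N_X\nu = -W(X)$, which fixes the sign of the correction term.
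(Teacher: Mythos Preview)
Your proof is correct and takes essentially the same approach as the paper's in Appendix~\ref{sec.hypersurfaces}: both express the spin connection in an adapted frame $(e_1,\ldots,e_{n-1},\nu)$, separate the tangential from the normal Christoffel symbols, and identify the normal contribution as $\tfrac12 W(X)\star\nu\star\Joker(\phi)$. The only cosmetic difference is that you pick a synchronous frame at $p$ to annihilate the tangential connection forms outright, whereas the paper keeps a general orthonormal frame, uses $\Gamma^{N\,k}_{ij}=\Gamma^{M\,k}_{ij}$ for $i,j,k<n$, and then invokes that $\Joker$ is $\nabla^{M,g_s}$-parallel because it arises from a fixed algebraic map via the associated-bundle construction.
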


A proof will be given in Appendix~\ref{sec.hypersurfaces}.

In combination with equations \eqref{J.id.N}, \eqref{J.id.mod.N}, \eqref{J.id.even}, and \eqref{J.id.mod.even}
%\footnote{Check at the end: Should we also cite \eqref{J.id.mod}? Or maybe comment on change of orientation?} 
we will see later in this article that families of $(M,g_s)$-parallel spinors will lead to solutions of the generalized imaginary Killing spinor equation \eqref{eq.imag.killing} which is one of the constraint equations. 

%For $n$ odd, the choices above imply that
%$\nu$ acts by Clifford multiplication in the same way as $i\omega^\C_M$.

%%%%%%%%%%%%%%%%%%%%%%%%%%%%%%%%%%
\subsection{BBGM connection}\label{subseq.bbgm} 
%%%%%%%%%%%%%%%%%%%%%%%%%%%%%%%%%%

Let $M$ be a compact spin manifold. We denote the space of all Riemannian
metrics on $M$ by $\cM$. For every metric $g\in \cM$  we define   
$$\cF_g = \Gamma(\Sigma^gM),$$
and the disjoint union 
  $$\cF:=\coprod_{g\in \cM} \cF_g.$$
One can equip $\cF$ and $\cM$ naturally with the structure of a 
Fr\'echet bundle $\pi^\cF\colon\cF\to\cM$. This bundle structure is needed to define a connection on this bundle. 
The oldest references that we used are by Bourguignon and Gauduchon \cite{bourguignon.gauduchon:92} resp.\ by Bär, Gauduchon and Moroianu \cite{baer.gauduchon.moroianu:05}, this is why we choose the abbreviation BBGM for the four names. However we were told that there was also work by Bismut. The concepts were later properly formalized under the name 'universal spinor bundle' in \cite{ammann.weiss.witt:16a} and \cite{mueller.nowaczyk:17}, where a finite-dimensional fiber bundle with a partial connection is constructed whose sections correspond to the elements of~$\cF$ such that the parallel transport corresponds to the BBGM parallel transport. The connection is given in terms of horizontal spaces $\cH_{(g,\phi)}$, i.e.\ vector spaces satisfying
$$ T_{(g,\phi)}\cF= \cH_{(g,\phi)}\oplus \Gamma(\Sigma^g M)$$
in the sense of topological vector spaces such that 
  $$d\pi^\cF|_{(g,\phi)}:T_{(g,\phi)}\cF\to T_g\cM=\Gamma(\bigodot\nolimits^2 T^*M)$$ 
restricts to an isomorphism  $I_{(g,\phi)}:\cH_{(g,\phi)}\to \Gamma(\bigodot\nolimits^2 T^*M)$. In other words we obtain an injective map of Fr\'echet spaces 
$L_{(g,\phi)}:\Gamma(\bigodot\nolimits^2 T^*M)\to T_{(g,\phi)}\cF$
by postcomposing  $(I_{(g,\phi)})^{-1}$ with the inclusion of  $\cH_{(g,\phi)}\subset
 T_{(g,\phi)}\cF$. 

The space $\cH_{(g,\phi)}$ will smoothly depend on $(g,\phi)$ and will be compatible with the vector bundle structure of $\cF$. We do not require more knowledge about Fr\'echet manifolds in our article, so we do not introduce this Fr\'echet structure in more detail.
 
To describe the horizontal space precisely we give the maps 
$L_{(g,\phi)}$:
We assume that $h\in  T_{g}\cM=\Gamma(\bigodot\nolimits^2 T^*M)$. Let $g_t$ be a smooth path of metrics such that $g_0=g$ and $\frac{d}{dt}|_{t=0} g_t=h$.
We consider the cylinder $Z=M\times [0,1]$ with the metric $g_t+dt^2$. Then, similar to \cite{baer.gauduchon.moroianu:05} we can extend the spinor $\phi_0:=\phi$, defined on $M\times\{0\}$, uniquely to a spinor $\phi_\bullet$ 
on $M\times [0,1]$ satisfying $\nabla_{\partial_t}\phi_\bullet=0$ where $\nabla$ denotes the Levi-Civita connection on the cylinder. Note that $\phi_\bullet$ may be interpreted as a family, parametrized over $[0,1]$, of spinor fields $\phi_t$ with respect to the family of metrics $g_t$.
 The restriction of $\phi_\bullet$ to $M\times\{t\}$, denoted by $\phi_t$ is a spinor for the metric $g_t$, and $t\mapsto \phi_t$ is a smooth path in $\cF$. We now define 
  $$L_{(g,\phi)}(h)=\frac{d}{dt}\Big|_{t=0}\phi_t.$$
\begin{lemma}
The map $L_{(g,\phi)}$ is well-defined, linear, and smooth in $(g,\phi)$.
\end{lemma}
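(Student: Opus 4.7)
The plan is to reduce the definition of $L_{(g,\phi)}(h)$ to a first-order linear ODE on a fixed bundle, read off an explicit local formula for $L_{(g,\phi)}(h)$ in terms of $g$, $h$, and $\phi$, and then deduce all three claims from that formula.

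First I would set up existence. On the cylinder $Z = M\times [0,1]$ with metric $g_t + dt^2$, the field $\partial_t$ is a unit geodesic normal (no cross terms in the metric), so $\nabla^Z_{\partial_t}\phi_\bullet = 0$ is a linear first-order ODE along each $t$-line. Since $M$ is compact and the connection coefficients depend smoothly on $(g_t, \dot g_t)$, standard ODE theory gives existence, uniqueness and smooth dependence of $\phi_\bullet$ on the initial datum $\phi_0 = \phi$ and on the path $t\mapsto g_t$. Hence the path $t\mapsto \phi_t \in \cF$ is smooth and $\frac{d}{dt}|_{t=0}\phi_t \in T_{(g,\phi)}\cF$ makes sense.

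Next I would establish well-definedness and linearity simultaneously by computing the derivative at $t=0$ explicitly. Using the Bourguignon--Gauduchon identification, I identify each $\Sigma^{g_t}M$ with $\Sigma^{g}M$ via the spin lift of the unique $g$-symmetric, $g$-positive endomorphism $B_t$ with $g_t(B_t X, B_t Y) = g(X,Y)$. Under this identification the parallel transport equation $\nabla^Z_{\partial_t}\phi_\bullet = 0$ becomes an ODE of the form $\partial_t\tilde\phi_t + A(g_t, \dot g_t)\tilde\phi_t = 0$, where $A(g_t,\dot g_t)$ is a zero-order operator on $\Sigma^g M$ that depends \emph{linearly} on $\dot g_t$ (this linearity is the well-known defining property of the BBGM connection one-form, and can be extracted by differentiating the generalized cylinder formula in Lemma~\ref{lem.hyperflaechenformel} and the identification $B_t$ in $t$). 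Evaluating at $t=0$ yields
\begin{equation*}
L_{(g,\phi)}(h) \;=\; \frac{d}{dt}\Big|_{t=0}\phi_t \;=\; -A(g,h)\,\phi,
\end{equation*}
and since the right-hand side involves only $g$, the one-jet $h = \dot g_0$, and $\phi$, the construction is independent of the chosen path $g_t$ representing $h$. The explicit linear dependence of $A(g,\cdot)$ on $h$ immediately gives linearity of $L_{(g,\phi)}$.

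Finally, smoothness in $(g,\phi)$ follows from the smoothness of the formula $L_{(g,\phi)}(h) = -A(g,h)\phi$: the operator $A(g,h)$ depends smoothly on $g$ and linearly (hence smoothly) on $h$, while its action on $\phi$ is linear. Under the Fr\'echet bundle structure on $\cF\to\cM$ this is precisely smoothness of $L$ as a bundle map $\pi^*\Gamma(\bigodot^2 T^*M)\to T\cF$. The main obstacle is the middle step: carefully justifying that the derivative at $t=0$ really is captured by a zero-order expression linear in $h$, which requires the explicit form of the BBGM identification together with Lemma~\ref{lem.hyperflaechenformel} applied to the family of slices $M\times\{t\}\subset Z$.
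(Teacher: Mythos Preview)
Your approach is sound in outline and differs from the paper's, which does not argue directly but appeals to the universal spinor bundle of M\"uller--Nowaczyk \cite{mueller.nowaczyk:17}: there a finite-dimensional bundle $\Sigma M\to SM$ over the bundle of fiberwise inner products is built, equipped with a vertical partial connection, and the lemma drops out of that finite-dimensional picture. You instead trivialize the family $\{\Sigma^{g_t}M\}_t$ by the Bourguignon--Gauduchon gauge $B_t$ and rewrite the transport equation as a linear ODE on a fixed bundle. This is precisely the computation that underlies the universal spinor bundle, done by hand; your route yields an explicit local formula, the paper's route packages the verification into a single citation.

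Two points need tightening. First, $L_{(g,\phi)}(h)$ is by definition a tangent vector to the total space $\cF$, so under your trivialization it is the pair $(h,\,-A(g,h)\phi)$, not just the vertical component $-A(g,h)\phi$; the well-definedness and linearity claims are unaffected once this is written correctly. Second, and more substantively, your appeal to Lemma~\ref{lem.hyperflaechenformel} is misplaced: that lemma computes $\nabla^N_X$ for $X$ \emph{tangent to the slice} $M\times\{s\}$, whereas here you need $\nabla^Z_{\partial_t}$ in the normal direction. The relevant formula comes instead from the Christoffel symbols of the cylinder metric $g_t+dt^2$ in the $\partial_t$-direction, which depend only on $g_t$ and linearly on $\dot g_t$ (this is the computation carried out in \cite[Section~5]{baer.gauduchon.moroianu:05}, where in fact the cylinder transport is shown to coincide with the Bourguignon--Gauduchon identification, so $A=0$ in your gauge). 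With that correction the argument goes through.
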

The lemma follows from the construction of the universal spinor bundle given in \cite{mueller.nowaczyk:17}. The strategy in that paper is as follows: 
Let $\pi^S:SM\to M$ be the bundle of positive definite symmetric bilinear forms on $TM$. A complex vector bundle  $\pi^\univ:\Sigma M \rightarrow SM$
is constructed, called the \emph{universal spinor bundle}, which carries a scalar product, a Clifford multiplication with vectors in $TM$ and partial connection
on $\Sigma M$ with respect to $\pi^S\colon SM\to M$. The Clifford multiplication is given by a bilinear map $\cl_g:T_{\pi^S(g)}M\times \Sigma_g M\to \Sigma_g M$ for every $g\in SM$. Note that $g$ is a scalar product on one single tangent space, namely on $T_{\pi^S(g)}M$, and thus $\Sigma_g$ is thus a fiber of the vector bundle $\pi^\univ:\Sigma M\to SM$. This Clifford multiplication shall satisfy the Clifford relations and shall depend smoothly on $g$.
 By ``partial connection'' we mean that for any section 
$\phi$ of the bundle  $\pi^\univ:\Sigma M \rightarrow SM$, the covariant derivative
$\nabla_X\phi$ is defined for some $X\in T(SM)$ if and only $d\pi^S(X)=0$ (i.e., $X$ is vertical for $\pi^S$). This partial connection comes from the vertical connection defined in \cite[Definition~2.10]{mueller.nowaczyk:17}, and allows to define a map  $L_{(g,\phi)}$ as above. In particular this shows that $\frac{d}{dt}|_{t=0}\phi_t$ does not depend on how we choose~$g_t$, but only its derivative a $t=0$.

%For the well-definedness one has to show that $\frac{d}{dt}|_{t=0}\phi_t$ does not depend on how we chose $g_t$.
% This can be seen clearly from the definition of the vertical covariant derivative defined in 2.10 of \cite{mueller.nowaczyk:17}, which is a vertical covariant derivative of $\pi^\Sigma_{SM}: \Sigma M \rightarrow SM$ of the universal spinor bundle over the manifold of positive-definite bilinear forms defined in Def. 2.3 of the same article (setting $r=0, s=n$), see also Remark 2.4 ibid. 'Vertical covariant derivative' means that we can form the derivative in any {\em vertical} direction in $SM$ (only changing the Riemannian metric, not the base point), which is exactly what we do here.

%%%%%%%%%%%%%%%%%%%%%%%%%%%%%%%%%%
%%%%%%%%%%%%%%%%%%%%%%%%%%%%%%%%%%
\section{Variation of the parallel spinor equation}\label{var.par.spinor.eq}
%%%%%%%%%%%%%%%%%%%%%%%%%%%%%%%%%%
%%%%%%%%%%%%%%%%%%%%%%%%%%%%%%%%%%

Let us first recall a result from \cite{ammann.weiss.witt:16a}:
For $(g,\phi) \in \cF$ let $\hat\kappa_{g,\phi}: \cH_{(g,\phi)} \rightarrow \Gamma( \Sigma^gM \otimes T^*M)$ be the map defined by

\begin{eqnarray}\label{kappa-formula}
  \hat\kappa_{g,\phi}(h) &:=&  \tfrac{1}{4} \sum_{i \neq j} (\nabla^g_{e_i} h)(\,\cdot\,\,,e_j) e_i \cdot e_j \cdot \phi.
\end{eqnarray}
In \cite[Sec. 4.3]{ammann.weiss.witt:16a} a similar map was defined, namely $\kappa_{g,\phi}: T_{(g,\phi)} \cF=\cH_{(g,\phi)}\oplus \Gamma(\Sigma^g M)\rightarrow \Gamma( \Sigma^gM \otimes T^*M)$ which is related to $\hat\kappa_{g,\phi}$ by the formula  $\kappa_{g,\phi}(h,\psi)=\hat\kappa_{g,\phi}(h)+\na_\cdot^g\psi$. It was shown in \cite[Lemma~4.12]{ammann.weiss.witt:16a} that  this is related to the differential of the map $K_X:\cF \to \cF$, $(g,\phi)\mapsto (g,\nabla^g_X\phi)$
as follows: 
  \begin{eqnarray}\label{kappa-formula2}  
  (d_{(g,\phi)} K_X)(h, \psi) =  (h, \kappa_{g,\phi}(h,\psi)(X)) = (h, \hat\kappa_{g,\phi}(h)(X)+ \nabla^g_X \psi).
  \end{eqnarray}
We define the Wang map $\cW_{g,\phi}$ as the composition
$$\bigodot\nolimits^2 T^*M\hookrightarrow\bigotimes\nolimits^2 T^*M\xrightarrow{\cl^g(\,\cdot\,,\phi)\otimes \id} \Sigma^gM\otimes T^*M$$
so that
\begin{align*}
\cW_{g,\phi}(h)=(X\mapsto\sum_{j=1}^n h(X,e_j)e_j\cdot \phi ).
\end{align*}
It is easy to see (c.\ f.\ \cite[Lemma 2.3, 1.]{dai.wang.wei:05}) that
  $$|\cW_{g,\phi}(h)|=|\phi|\cdot|h|$$
and
  $$\nabla_X \cW_{g,\phi}(h) = \cW_{g,\phi}(\nabla_X h) + \cW_{g,\nabla_X\phi}(h).$$

\begin{lemma}\label{lem.kappa.einstein}If $\phi$ is a parallel spinor on $(M,g)$, then 
  the diagram
  \begin{center}
   \begin{tikzpicture}[node distance=2.8cm, auto]
    \node (TT1) at (0,0) {$\bigodot\nolimits^2 T^*M$};
    %\node (TT2) at (0,-2) {$TT(M,g)$};
    \node (twD1) at (4,0) {$\Gamma(\Sigma^gM\otimes T^*M)$};
    \node (twD2) at (4,-2) {$\Gamma(\Sigma^gM\otimes T^*M)$};
    \draw[->] (TT1) to node {$\cW_{g,\phi}$} (twD1);
    \draw[->] (TT1) to node [swap] {$4\hat\kappa_{g,\phi}+\phi\otimes \divv^g$} (twD2);
    \draw[->] (twD1) to node {$D^{T^*M}$} (twD2);
   \end{tikzpicture}
   \end{center}
commutes. In other words we have for any $h\in \bigodot\nolimits^2 T^*M$
  $$D^{T^*M}\circ \cW_{g,\phi} (h)= 4\hat\kappa_{g,\phi}(h) + \phi\otimes \divv^g(h),$$
  
where, for a vector bundle $E$ over $M$ with connection we define the Dirac operator $D^E$ on $\Sigma^gM \otimes E$ by $\cl^g \circ \nabla^{\Sigma^gM \otimes E}$ where $\nabla^{\Sigma^gM \otimes E} $ is the product connection.
\end{lemma}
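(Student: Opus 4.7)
The plan is to perform the computation pointwise in a normal frame. Fix an arbitrary $p\in M$ and choose a local orthonormal frame $(e_1,\ldots,e_n)$ with $\nabla^g e_i|_p=0$. Since $\phi$ is parallel, the product-rule formula stated just above the lemma,
$$\nabla_X \cW_{g,\phi}(h) = \cW_{g,\phi}(\nabla_X h) + \cW_{g,\nabla_X\phi}(h),$$
reduces at every point to $\nabla_X\cW_{g,\phi}(h)=\cW_{g,\phi}(\nabla_X h)$. Unwinding the definition of $\cW_{g,\phi}$ and of the Dirac operator $D^{T^*M}=\cl^g\circ\nabla^{\Sigma_gM\otimes T^*M}$, the section $D^{T^*M}\cW_{g,\phi}(h)\in\Gamma(\Sigma_gM\otimes T^*M)$, viewed as a $\Sigma_gM$-valued 1-form, evaluates at $p$ on $e_k$ to
$$\bigl(D^{T^*M}\cW_{g,\phi}(h)\bigr)(e_k)=\sum_{i} e_i\cdot \bigl(\nabla_{e_i}\cW_{g,\phi}(h)\bigr)(e_k)=\sum_{i,j}(\nabla_{e_i}h)(e_k,e_j)\, e_i\cdot e_j\cdot\phi,$$
where the term $\cW_{g,\phi}(h)(\nabla_{e_i}e_k)$ vanishes at $p$ by the choice of frame.

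Now I would split the double sum into its diagonal and off-diagonal parts. The off-diagonal part is, by the very definition \eqref{kappa-formula} of $\kappa_{g,\phi}$, exactly $4\kappa_{g,\phi}(h)(e_k)$. For the diagonal part, the Clifford relation $e_i\cdot e_i=-1$ gives
$$\sum_{i}(\nabla_{e_i}h)(e_k,e_i)\,e_i\cdot e_i\cdot\phi=-\sum_{i}(\nabla_{e_i}h)(e_k,e_i)\,\phi=\divv^g(h)(e_k)\cdot\phi,$$
using the convention $\divv^g(h)(X)=-\sum_i(\nabla_{e_i}h)(X,e_i)$ and the symmetry of $h$. Adding the two contributions yields the claimed identity at $p$, and since $p$ was arbitrary, the diagram commutes. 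I do not expect a genuine obstacle here: the lemma is essentially a bookkeeping statement separating the ``symmetric'' trace part of $\nabla h$ (which is the divergence) from the ``antisymmetric'' remainder (which is $\kappa$), and the only care needed is to keep the sign convention for $\divv^g$ consistent with the Clifford identity $e_i\cdot e_i=-1$.
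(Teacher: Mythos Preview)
Your proof is correct and follows essentially the same approach as the paper: compute in a synchronous frame at a point, expand $D^{T^*M}\cW_{g,\phi}(h)$ as $\sum_{i,j}(\nabla_{e_i}h)(\cdot,e_j)\,e_i\cdot e_j\cdot\phi$, and split into the off-diagonal part (which is $4\kappa_{g,\phi}(h)$ by definition) and the diagonal part (which yields $\phi\otimes\divv^g h$ via $e_i\cdot e_i=-1$). The only cosmetic difference is that the paper keeps the $T^*M$-slot implicit rather than evaluating on~$e_k$.
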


\begin{proof}
  Let $h \in \Gamma(\bigodot^2 T^*M)$.
  Then $\cW_{g,\phi}(h)=\sum_{j=1}^n h(\,\cdot\,\,,e_j)e_j\cdot \phi$.
  We now assume that $(e_1,\ldots,e_n)$ is a local orthonormal frame satisfying $\nabla e_j|_p=0$ at $p\in M$ for $1\leq j\leq n$, and we calculate in $p$
  \begin{align*}D^{T^*M}\left(\cW_{g,\phi}(h)\right) &= \sum_{i=1}^n e_i\cdot \nabla_{e_i}\left(\cW_{g,\phi}(h)\right)\\
    &= \sum_{i,j=1}^n(\nabla_{e_i}h)(\,\cdot\,\,,e_j) e_i\cdot e_j\cdot \phi\\
    &= 4 \hat\kappa_{g,\phi}(h) +  \sum_{i=1}^n(\nabla_{e_i}h)(\,\cdot\,\,,e_i) e_i\cdot e_i\cdot \phi\\
    &= 4 \hat\kappa_{g,\phi}(h) +  \phi\otimes \underbrace{\sum_{i=1}^n(-\nabla_{e_i}h)(\,\cdot\,\,,e_i)}_{=\divv^g h}.
  \end{align*}
%Now we use $\divv h=0$ and obtain the required formula. 
\end{proof}
%Now we use $\divv h=0$ and obtain the required formula. 

Very similarly we prove for arbitrary sections $\phi$ and $h$:

\begin{align}\begin{split}\label{DW.formula}
  D^{T^*M}\left(\cW_{g,\phi}(h)\right)(X) &= 4 \hat\kappa_{g,\phi}(h)(X) +  (\divv^g h)(X)\phi \\&\quad - 2 \nabla_{h^\#(X)}\phi - (h^\#(X))\cdot D^g\phi,
  \end{split}
\end{align}
where
$h^\#\in \End(TM)$ is defined as $h^\#(X):=h(\,\cdot\,,X)^\#$.

The following lemma is a straightforward generalization of a Lemma by McKenzie Wang 
\cite[Lemma~3.3 (d) for $k=1$]{wang_m:91}, see also \cite[Prop.\ 2.4]{dai.wang.wei:05}.
\begin{lemma}\label{lem.dww}
For $\phi\in \Gamma(\Sigma^gM)$ and $h\in\Gamma(\bigodot\nolimits^2 T^*M)$
we have 
\begin{align*}
  (D^{T^*M})^2\cW_{g,\phi}(h)&=  \cW_{g,\phi}(\Delta_E h) - 2 \sum_\ell \cW_{g,\nabla_{e_\ell}\phi}(\nabla_{e_\ell}h)+\cW_{g,D^2 \phi}(h)\\
  &\quad-2(X\mapsto\sum_{\ell}h^{\sharp}(e_{\ell})\cdot R_{X,e_{\ell}}\phi).
%& {}-\cl^g(\,\cdot\,,\phi)\otimes \id (h (\Ric(\,\cdot\,),\,\cdot\,)
\end{align*}
In particular, if $\phi$ is a parallel spinor, then the diagram
  \begin{center}
   \begin{tikzpicture}[node distance=2.8cm, auto]
    \node (TT1) at (0,0) {$\Gamma(\bigodot\nolimits^2 T^*M)$};
    \node (TT2) at (0,-2) {$\Gamma(\bigodot\nolimits^2 T^*M)$};
    \node (twD1) at (4,0) {$\Gamma(\Sigma^gM\otimes T^*M)$};
    \node (twD2) at (4,-2) {$\Gamma(\Sigma^gM\otimes T^*M)$};
    \draw[->] (TT1) to node {$\cW_{g,\phi}$} (twD1);
    \draw[->] (TT2) to node {$\cW_{g,\phi}$} (twD2);
    \draw[->] (TT1) to node [swap] {$\Delta_E$} (TT2);
    \draw[->] (twD1) to node {$(D^{T^*M})^2$} (twD2);
   \end{tikzpicture}
   \end{center}
commutes. 
\end{lemma}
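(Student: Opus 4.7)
The plan is to derive the formula from the Weitzenböck formula for the twisted Dirac operator $D^{T^*M}$ acting on $\Sigma_gM\otimes T^*M$. Recall that for any vector bundle $E$ with metric connection one has
\[
(D^E)^2 = \nabla^*\nabla + \tfrac{1}{4}\Scal + \mathcal{R}^E,
\]
where the twist curvature acts on decomposable sections by $\mathcal{R}^E(\sigma\otimes\xi)=\tfrac12\sum_{i,j}e_i\cdot e_j\cdot\sigma\otimes R^E_{e_i,e_j}\xi$. For $E=T^*M$ the curvature $R^E$ is the Riemann tensor (acting on covectors).

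First I would compute $\nabla^*\nabla\cW_{g,\phi}(h)$. Using the Leibniz rule $\nabla_X \cW_{g,\phi}(h)=\cW_{g,\phi}(\nabla_X h)+\cW_{g,\nabla_X\phi}(h)$ recorded before Lemma~\ref{lem.kappa.einstein}, iterating once and working at $p$ in a synchronous orthonormal frame (so that $\nabla^*\nabla=-\sum_i\nabla_{e_i}^2$), the cross terms combine symmetrically and yield
\[
\nabla^*\nabla\cW_{g,\phi}(h) = \cW_{g,\nabla^*\nabla\phi}(h) - 2\sum_\ell \cW_{g,\nabla_{e_\ell}\phi}(\nabla_{e_\ell}h) + \cW_{g,\phi}(\nabla^*\nabla h).
\]
Feeding this into Weitzenböck and applying the Lichnerowicz identity $\nabla^*\nabla\phi+\tfrac{\Scal}{4}\phi=D^2\phi$ together with $\C$-linearity of $\cW$ in $\phi$ combines the first term above with the scalar curvature term into $\cW_{g,D^2\phi}(h)$.

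The main obstacle is the twist curvature contribution $\mathcal{R}^{T^*M}\cW_{g,\phi}(h)$. Writing $\cW_{g,\phi}(h)=\sum_i h^\#(e_i)\cdot\phi\otimes e^i$ and expanding $R^{T^*M}_{e_k,e_l}e^i$ via the Riemann tensor, I would split the resulting quadruple sum in two by using the anticommutation rule $e_k\cdot e_l + e_l\cdot e_k=-2\delta_{kl}$ together with the symmetry $h(X,Y)=h(Y,X)$ and the first Bianchi identity. After reindexing, one piece collapses, via the definition $\mathring{R}h(X,Y)=h(R_{e_i,X}Y,e_i)$ and the identity $R_{X,Y}\phi=\tfrac14\sum_{k,l}R(X,Y,e_k,e_l)e_k\cdot e_l\cdot\phi$, into $-2\cW_{g,\phi}(\mathring{R}h)$; the other piece, which mixes the Riemann tensor with Clifford multiplication on $\phi$, gives exactly $-2(X\mapsto\sum_\ell h^\#(e_\ell)\cdot R_{X,e_\ell}\phi)$. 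Combining everything and using $\Delta_E h=\nabla^*\nabla h-2\mathring{R}h$ produces the asserted formula. This algebraic juggling is the only non-routine step and is where Wang's argument \cite[Lemma~3.3(d)]{wang_m:91} does its real work.

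For the second statement, assume $\phi$ is parallel. Then $\nabla_X\phi=0$ for all $X$, hence $D^2\phi=0$ and moreover $R_{X,Y}\phi=-\nabla_X\nabla_Y\phi+\nabla_Y\nabla_X\phi+\nabla_{[X,Y]}\phi=0$ for all $X,Y$. Therefore the last three correction terms in the general formula vanish identically, leaving $(D^{T^*M})^2\cW_{g,\phi}(h)=\cW_{g,\phi}(\Delta_E h)$, which is exactly commutativity of the diagram.
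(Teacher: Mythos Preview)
Your argument is correct, and it proceeds by a genuinely different route from the paper. The paper computes $(D^{T^*M})^2\cW_{g,\phi}(h)$ by brute force: it applies $D^{T^*M}$ once to get three terms, applies $D^{T^*M}$ again to each, and then spends most of the proof pushing Clifford factors around (using identities such as $e_l\cdot e_k\cdot e_j= e_k\cdot e_j\cdot e_l+2e_k\delta_{lj}-2\delta_{kl}e_j$) until the pieces assemble into $\cW_{g,\phi}(\Delta_E h)$ plus the three correction terms; along the way several Ricci-type terms appear and then cancel. You instead invoke the twisted Lichnerowicz--Weitzenb\"ock formula $(D^{T^*M})^2=\nabla^*\nabla+\tfrac14\Scal+\mathcal{R}^{T^*M}$ once and compute the three summands separately. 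The Leibniz rule for $\cW$ handles the rough Laplacian cleanly, the scalar curvature term merges with $\cW_{g,\nabla^*\nabla\phi}(h)$ into $\cW_{g,D^2\phi}(h)$ via the untwisted Lichnerowicz identity, and the only real work is the twist curvature term, which indeed splits exactly as you claim into $-2\cW_{g,\phi}(\mathring{R}h)$ and the spinor-curvature piece. Your approach is more structural and explains transparently why $\Delta_E$ and $D^2\phi$ show up; the paper's approach is more elementary in that it needs no Weitzenb\"ock machinery, at the price of a longer index computation with intermediate cancellations. One small remark: the curvature manipulation you sketch actually only uses the antisymmetries $R_{klij}=-R_{lkij}=-R_{klji}$ together with the pair symmetry $R_{klij}=R_{ijkl}$; the first Bianchi identity is not needed directly (though of course the pair symmetry is equivalent to it modulo the antisymmetries).
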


\begin{proof}
	We can locally write 
\begin{align*}
\cW_{g,\phi}(h)=\sum_{i,j}h_{ij}e_j\cdot \phi\otimes e_i^*
\end{align*}
where $\left\{e_1,\ldots e_n\right\}$ is a local orthonormal frame with respect to~$g$ with $\nabla e_i=0$ at~$p$.
We define $\na_kh_{ij}\coloneq (\na_{e_k} h)(e_i,e_j)$,  $\na^2_{k,l}h_{ij}\coloneq (\na^2_{e_k,e_l} h)(e_i,e_j)$, and let  $R_{e_k,e_l}h_{ij}=\na^2_{k,l}h_{ij}-\na^2_{l,k}h_{ij}$ be the associated curvature.
By using the Clifford relations, we get on the domain of the frame:
\begin{align*}
D^{T^*M}(\cW_{g,\phi}(h))&=\sum_{i,j,k}(\nabla_kh_{ij}e_k\cdot e_j\cdot\phi+h_{ij}e_k\cdot e_j\cdot\nabla_{e_k}\phi)\otimes e_i^*\\
&=\sum_{i,j,k}(\nabla_kh_{ij}e_k\cdot e_j\cdot\phi-h_{ij}e_j\cdot e_k\cdot\nabla_{e_k}\phi-2h_{ij}\delta_{kj}\nabla_{e_k}\phi)\otimes e_i^*\\
&=\underbrace{\sum_{i,j,k}\nabla_kh_{ij}e_k\cdot e_j\cdot\phi\otimes e_i^*}_{=:(\mathrm{A})} 
+ \underbrace{(-\cW_{g,D\phi}(h))}_{=: (\mathrm{B})}
 + \underbrace{(-2\sum_{i,j}h_{ij}\nabla_{e_j}\phi\otimes e_i^*)}_{=: (\mathrm{C})}
\end{align*}
Now we apply $D^{T^*M}$ to this equation and we use the notation $\psi=D\phi$. Then we get from the previous equation at the point~$p$
\begin{align*}
D^{T^*M}((\mathrm{B}))&=-D^{T^*M}(\cW_{g,\psi}(h))\\&=-\sum_{i,j,k}\nabla_kh_{ij}e_k\cdot e_j\cdot\psi\otimes e_i^*+\cW_{g,D\psi}(h)+2\sum_{i,j}h_{ij}\nabla_{e_j}\psi\otimes e_i^*.
\end{align*}
Moreover,
\begin{align*}
D^{T^*M}((\mathrm{C}))&=-2\sum_{i,j,k}(h_{ij}e_k\cdot \nabla^2_{e_k,e_j}\phi+\nabla_kh_{ij}e_k\cdot \nabla_{e_j}\phi)\otimes e_i^*\\
&=-2\sum_{i,j,k}(h_{ij}e_k\cdot\nabla^2_{e_j,e_k}\phi
+h_{ij}e_k\cdot R_{e_k,e_j}\phi+\nabla_kh_{ij}e_k\cdot\nabla_{e_j}\phi)  \otimes e_i^*\\
&=-2\sum_{i,j}(h_{ij}\nabla_{e_j}\psi
+\sum_kh_{ij}e_k\cdot R_{e_k,e_j}\phi+\sum_k\nabla_kh_{ij}e_k\cdot\nabla_{e_j}\phi)  \otimes e_i^*
\end{align*}
and
\begin{align*}
D^{T^*M}((\mathrm{A}))&=\underbrace{\sum_{i,j,k,l}\nabla^2_{lk}h_{ij}e_l\cdot e_k\cdot e_j\cdot \phi\otimes e_i^*}_{=: (\mathrm{E})}+\underbrace{\sum_{i,j,k,l}\nabla_kh_{ij}e_l\cdot e_k\cdot e_j\cdot\nabla_{e_l}\phi\otimes e_i^*}_{=: (\mathrm{F})}.
\end{align*}
By using the relation $e_l\cdot e_k\cdot e_j= e_k\cdot e_j\cdot e_l+2e_k\delta_{lj}-2\delta_{kl}e_j$,
\begin{align*}
(\mathrm{F})=\sum_{i,j,k}(\nabla_kh_{ij}e_k\cdot e_j\cdot\psi+2\nabla_kh_{ij}e_k\cdot\nabla_{e_j}\phi
-2\nabla_kh_{ij}e_j\cdot\nabla_{e_k}\phi)\otimes e_i^*.
\end{align*}
By adding up, a lot of terms cancel and we are left with
\begin{align*}
(D^{T^*M})^2(\cW_{g,\phi}(h))&=(\mathrm{E})+\cW_{g,D^2\phi}(h)-2\sum_{i,j,k}\nabla_kh_{ij}e_j\cdot\nabla_{e_k}\phi\otimes e_i^*\\
 &\quad+2\sum_{i,j,k}h_{ij}e_k\cdot R_{e_j,e_k}\phi\otimes e_i^*
\end{align*}	
so it remains to consider the term $(\mathrm{E})$. We have
\begin{align*}
(\mathrm{E})=&-\sum_{i,j,k}\nabla^2_{kk}h_{ij}e_j\cdot \phi\otimes e_i^*+\sum_{\substack{i,j,k,l\\k\neq l}}\nabla^2_{lk}h_{ij}e_l\cdot e_k\cdot e_j\cdot \phi\otimes e_i^*\\
=&\sum_{i,j}(\nabla^*\nabla h)_{ij}e_j\cdot \phi\otimes e_i^*+\sum_{i,j,k,l}\frac{1}{2}R_{e_k,e_l}h_{ij}e_k\cdot e_l\cdot e_j\cdot\phi\otimes e_i^*.
\end{align*}
By the formula which expresses the curvature of $T^*M\otimes T^*M$ in terms of the curvature of $TM$, we obtain $R_{e_k,e_l}h_{ij}=-\sum_{r}(R_{klir}h_{rj}+R_{kljr}h_{ir})$. 
Using the relation $e_k\cdot e_l\cdot e_j=e_j\cdot e_k\cdot e_l-2e_k \delta_{jl}+2e_l\delta_{jk}$, we get
\begin{align*}
  -\frac{1}{2}\sum_{i,j,k,l,r}&R_{klir}h_{rj}e_k\cdot e_l\cdot e_j\phi\otimes e_i^*\\
  &=
-\frac{1}{2}\sum_{i,j,k,l,r}R_{klir}h_{rj}e_j\cdot e_k\cdot e_l\cdot \phi\otimes e_i^*\\
&\quad
+\sum_{i,k,l,r}(R_{klir}h_{rl}e_k-R_{klir}h_{rk}e_l)\cdot \phi\otimes e_i^*\\
&=-\frac{1}{2}\sum_{i,j,k,l,r}R_{klir}h_{rj}e_j\cdot e_k\cdot e_l\cdot \phi\otimes e_i^*-2
\sum_{i,j}(\mathring{R}h)_{ij}e_j\cdot \phi\otimes e_i^*
\end{align*}
and
\begin{align*}
  -\frac{1}{2}\sum_{i,j,k,l,r}&R_{kljr}h_{ir}e_k\cdot e_l\cdot e_j\cdot\phi\otimes e_i^*=\\
  &=
-\frac{1}{2}\sum_{i,j,k,l,r}R_{kljr}h_{ir}e_j\cdot e_k\cdot e_l\cdot \phi\otimes e_i^*\\
&\quad
+\sum_{i,k,l,r}(R_{kllr}h_{ir}e_k-R_{klkr}h_{ir}e_l)\cdot \phi\otimes e_i^*\\
=&\quad
-\frac{1}{2}\sum_{i,j,k,l,r}R_{kljr}h_{ir}e_j\cdot e_k\cdot e_l\cdot \phi\otimes e_i^*
+2\sum_{i,j,r}\Ric_{jr}h_{ir}e_j\cdot \phi \otimes e_i^*.
\end{align*}
We get
\begin{align*}
(D^{T^*M})^2(\cW_{g,\phi}(h))&=\cW_{g,\phi}\bigl(\underbrace{\nabla^*\nabla h-2\mathring{R}h }_{=\Delta_Eh}\bigr)-\frac{1}{2}\sum_{i,j,k,l,r}R_{klir}h_{rj}e_j\cdot e_k\cdot e_l\cdot \phi\otimes e_i^*\\
&\quad-\frac{1}{2}\sum_{i,j,k,l,r}R_{kljr}h_{ir}e_j\cdot e_k\cdot e_l\cdot \phi\otimes e_i^*\\
&\quad
+2\sum_{i,j,r}\Ric_{jr}h_{ir}e_j\cdot \phi \otimes e_i^*+\cW_{g,D^2\phi}(h)\\
&\quad-2\sum_{i,j,k}\nabla_kh_{ij}e_j\cdot\nabla_{e_k}\phi\otimes e_i^*+2\sum_{i,j,k}h_{ij}e_k\cdot R_{e_j,e_k}\phi\otimes e_i^*.
\end{align*}
From the standard identities
\begin{align*}
R_{e_i,e_j}\phi=\frac{1}{4}\sum_{k,l}R_{ijkl}e_k\cdot e_l\cdot\phi,\qquad
\sum_ie_i\cdot R_{e_i,e_j}\phi=\frac{1}{2}\Ric(e_j)\cdot \phi,
\end{align*}
we deduce
\begin{align*}
-\frac{1}{2}\sum_{i,j,k,l,r}R_{klir}h_{rj}e_j\cdot e_k\cdot e_l\cdot \phi\otimes e_i^*
&=-2\sum_{i,j,r}h_{rj}e_j\cdot R_{e_i,e_r}\varphi\otimes e_i^*\\
-\frac{1}{2}\sum_{i,j,k,l,r}R_{kljr}h_{ir}e_j\cdot e_k\cdot e_l\cdot \phi\otimes e_i^*&
=-2\sum_{i,j,r}h_{ri}e_j\cdot R_{e_j,e_r}\varphi\otimes e_i^*\\&
=-\sum_{i,j,r}h_{ri}\Ric_{jr}e_j\cdot \varphi\otimes e_i^*\\
2\sum_{i,j,k}h_{ij}e_k\cdot R_{e_j,e_k}\phi\otimes e_i^*&=-\sum_{i,j,k}h_{ij}\Ric_{jk}e_k\cdot\varphi\otimes e_i^*
\end{align*}

which yields the final result.
%	
%  \begin{align*}
%  (D^{T^*M})^2\cW_{g,\phi}h= & \cW_{g,\phi}\Delta_E h - 2 \sum_\ell \cW_{g,\nabla_{e_\ell}\phi}(\nabla_{e_\ell}h)\\
%  & {}+\frac14 \Scal^g \cW_{g,\phi}(h)-\cl^g(\,\cdot\,,\phi)\otimes \id (h (\Ric(\,\cdot\,),\,\cdot\,)+\cW_{g,\nabla^*\nabla \phi}(h)
%\end{align*}
%\textcolor{red}{Work out in detail!}
\end{proof}

%%%%%%%%%%%%%%%%%%%%%%%%%%%%%%%%%%
%%%%%%%%%%%%%%%%%%%%%%%%%%%%%%%%%%%%%%%%%%%%%%%%%%%%%%%%%%%%%%%%%%%%%%
\section{The BBGM parallel transport preserves parallel spinors}\label{sec.par.preserved}
%%%%%%%%%%%%%%%%%%%%%%%%%%%%%%%%%%
%%%%%%%%%%%%%%%%%%%%%%%%%%%%%%%%%%%%%%%%%%%%%%%%%%%%%%%%%%%%%%%%%%%%%%%%

\begin{proposition}\label{prop.bg.par.par}
  Let $I\subset\R$ be an interval, $(g_s)_{s\in I}$
  be a path of Ricci-flat metrics with a divergence free derivative, $0\in I$,
  and let $\phi_0$ be a parallel spinor on $(M,g_0)$.
  Let, for all $s \in I$, $\phi_s$ be a spinor on $(M,g_s)$ such that $\frac{d}{ds}\phi_s=0$.
  Then  $\phi_s$ is a parallel spinor on  $(M,g_s)$ for all $s\in I$.
\end{proposition}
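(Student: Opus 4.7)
The plan is to show that $\Psi_s := \nabla^{g_s}\phi_s$ vanishes identically on $I$. Since $\phi_0$ is parallel, $\Psi_0 = 0$. Applying the variational formula \eqref{kappa-formula2} to the BBGM-horizontal curve $(g_s,\phi_s)$ yields, in the BBGM trivialization, the evolution equation
$$\tfrac{d}{ds}\nabla^{g_s}_X\phi_s = \kappa_{g_s,\phi_s}(h_s)(X), \qquad h_s := \dot g_s,$$
so the task is to show that this source term can be written linearly in $\Psi_s$, allowing one to conclude by ODE uniqueness.

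First I would extract from the Ricci-flat hypothesis, together with $\divv_{g_s} h_s = 0$, that $\Delta_E h_s \equiv 0$. Linearizing the scalar curvature at the Ricci-flat metric $g_s$ gives $\Delta_{g_s}(\tr h_s) = 0$, which on the compact $M$ forces $\tr h_s$ to be spatially constant (hence $\nabla d(\tr h_s) = 0$), and then linearizing the Ricci tensor yields $\Delta_E h_s = 0$. This is the input that makes Lemma~\ref{lem.dww} useful.

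The heart of the argument is the Hodge-theoretic observation previewed in the introduction: on the compact $M$, the elliptic formally self-adjoint operator $(D^{T^*M})^2$ satisfies $\ker (D^{T^*M})^2 = \ker D^{T^*M}$. Via Lemma~\ref{lem.dww} and $\Delta_E h_s = 0$, the expression $(D^{T^*M})^2 \cW_{g_s,\phi_s}(h_s)$ becomes a linear first-order differential expression in $\Psi_s$: the term $\cW_{\nabla_{e_\ell}\phi}(\nabla_{e_\ell} h)$ is $\cW_{\Psi_s(e_\ell)}(\nabla_{e_\ell} h_s)$; the Lichnerowicz identity on the Ricci-flat $g_s$ reduces $D^2 \phi_s = \nabla^*\nabla \phi_s$ to a divergence of $\Psi_s$; and $R_{X,Y}\phi_s = (d^\nabla\Psi_s)(X,Y)$ is an exterior derivative of $\Psi_s$. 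Projecting $\cW_{g_s,\phi_s}(h_s)$ orthogonally against the finite-dimensional kernel of $D^{T^*M}$ and inverting $(D^{T^*M})^2$ on the complement shows that $D^{T^*M}\cW_{g_s,\phi_s}(h_s)$ is a bounded (zeroth-order pseudodifferential) operator applied to $\Psi_s$. Combined with the general identity \eqref{DW.formula}, this yields $\kappa_{g_s,\phi_s}(h_s) = L_s \Psi_s$ for a bounded linear operator $L_s$ depending smoothly on $s$; the evolution equation becomes $\dot\Psi_s = L_s \Psi_s$ with $\Psi_0 = 0$, and Picard--Lindel\"of uniqueness in an appropriate Sobolev space on $M$ gives $\Psi_s \equiv 0$. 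The main delicate step is verifying that all these operators act boundedly on a common Sobolev space with smooth dependence on $s$; this requires tracking the regularity of the BBGM parallel transport in $s$ and the smoothness of the Hodge projection.
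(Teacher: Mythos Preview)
Your outline follows the paper's own route quite closely: differentiate $\nabla^{g_s}\phi_s$ using \eqref{kappa-formula2}, identify the source as $\kappa_{g_s,\phi_s}(h_s)$, use $\Delta_E h_s=0$ together with Lemma~\ref{lem.dww} to see that $(D^{T^*M})^2\cW_{g_s,\phi_s}(h_s)$ is first order in $\nabla\phi_s$, and close a differential inequality. The paper phrases the last step as a Gr\"onwall estimate on $\ell(s)=\|\nabla^{g_s}\phi_s\|_{H^1}^2$ rather than as ODE uniqueness for $\dot\Psi_s=L_s\Psi_s$, but that is cosmetic.

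The genuine gap is precisely the point you flag at the end but do not resolve. The Hodge projection onto $\ker D^{T^*M}_{g_s}$ need \emph{not} vary continuously in~$s$: along a general smooth path of Ricci-flat metrics the dimension of this kernel can jump. When it does, small nonzero eigenvalues of $D^{T^*M}_{g_s}$ tend to zero as $s\to s_0$, the partial inverse of $(D^{T^*M})^2$ on the orthogonal complement of the kernel blows up, and you lose any uniform-in-$s$ bound on your operator~$L_s$; the Picard--Lindel\"of step then fails. This is not a mere regularity-tracking issue but a structural obstruction. The paper deals with it in two genuine extra steps. First (Lemma~\ref{lem.bg.par.par}) it proves the statement under the additional hypothesis that $\dim\ker D^{T^*M}_{g_s}$ is constant on~$I$; compactness of~$I$ then gives a uniform spectral gap and the estimate $\|D^{T^*M}\sigma\|_{H^1}\le C_\gamma\|(D^{T^*M})^2\sigma\|_{L^2}$, which is exactly the uniform bound your argument needs. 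Second, it removes that hypothesis by passing to analytic one-parameter families, where the eigenvalues are analytic in~$s$ and hence the kernel dimension is minimal off a discrete set~$J$; the lemma is applied on each component of $I\setminus J$, the conclusion extended to~$\bar I_0$ by continuity (using that the dimension of parallel spinors is locally constant), and finally an arbitrary smooth path is approximated in $C^1$ by piecewise-analytic paths, using continuity of the BBGM transport in the limit. Without an argument of this kind your proof is incomplete.
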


Here $\frac{d}{ds}\phi_s=0$ should be understood as a derivation with respect to the BBGM connection.

In the proof of Proposition \ref{prop.bg.par.par}, we need a statement about continuous dependence of eigenvalues of the Dirac operator on the metric.
\begin{theorem}\label{Nicolais_thm} There exists a family of functions $\lambda_j:\mathcal{M}\to\R$, $j\in \Z$ such that 
\begin{itemize}
\item $\left\{\lambda_j(g)\right\}_{j\in \Z}=\mathrm{spec}(D_g)$ for all $g\in \mathcal{M}$
\item The family is nondecreasing, i.e.\ $\lambda_i\leq \lambda_j$ whenever $i\leq j$.
\item $\sup_{j\in \Z}|\mathrm{arsinh}(\lambda_j(g))-\mathrm{arsinh}(\lambda_j(\tilde{g})|\leq
 C\left\|g-\tilde{g}\right\|_{C^1}$
\end{itemize}
\end{theorem}
The theorem in the above version is proven in detail in \cite[Main Theo\-rem~2]{Nowaczyk2013}, but the statement we need was well-known long before.
  
To prove the proposition, we will at first prove the same statement under more restrictive assumptions.

\begin{lemma}\label{lem.bg.par.par}
  Let $(g_s)_{s\in I}$
  be a path of Ricci-flat metrics with a divergence free derivative, 
  $0\in I$,
  and let $\phi_0$ be a parallel spinor on $(M,g_0)$. 
  Assume that $\dim \ker D^{T^*M}$ is constant along $I$.
  Let $\phi_s$ be a spinor on $(M,g_s)$ such that $\frac{d}{ds}\phi_s=0$.
  Then  $\phi_s$ is a parallel spinor on  $(M,g_s)$ for all $s\in I$.
\end{lemma}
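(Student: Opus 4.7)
The plan is to recast the claim as a linear evolution equation for the tensor $\Omega_s := \nabla^{g_s}\phi_s\in\Gamma(\Sigma^{g_s}M\otimes T^*M)$, to exploit the constant-dimension hypothesis to close the equation, and to finish by uniqueness of solutions. Setting $h_s:=\partial_s g_s$, I would combine the BBGM-horizontality of the curve $s\mapsto(g_s,\phi_s)$ (i.e.\ $\frac{d}{ds}\phi_s = 0$ in the BBGM sense) with identity~\eqref{kappa-formula2} to obtain
\[
  \frac{d}{ds}\Omega_s \;=\; \kappa_{g_s,\phi_s}(h_s),
\]
again interpreted via the BBGM connection. Since $\phi_0$ is parallel, $\Omega_0 = 0$, and it will suffice to show that the right-hand side is a linear expression in $\Omega_s$.

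To effect that rewriting I would use the Wang map. Since $\divv_{g_s}h_s=0$, formula~\eqref{DW.formula} gives
\[
  4\kappa_{g_s,\phi_s}(h_s) \;=\; D^{T^*M}\cW_{g_s,\phi_s}(h_s) \;-\; 2\nabla^{g_s}_{h_s^{\#}(\cdot)}\phi_s \;-\; h_s^{\#}(\cdot)\cdot D^{g_s}\phi_s,
\]
whose last two terms are already linear in $\Omega_s$ (via $\nabla^{g_s}\phi_s = \Omega_s$ and $D^{g_s}\phi_s$ = a Clifford contraction of $\Omega_s$). To control the remaining $D^{T^*M}\cW_{g_s,\phi_s}(h_s)$, I would square: Lemma~\ref{lem.dww}, together with $\Delta_{E,g_s}h_s = 0$ for a divergence-free Ricci-flat family (\cite[Chap.~12\,D]{besse:87}), kills the leading $\cW(\Delta_E h_s)$ contribution, and the remaining terms $\cW_{\cdot,\nabla\phi_s}(\nabla h_s)$, $\cW_{\cdot,D^2\phi_s}(h_s)$, and $X\mapsto\sum_\ell h_s^{\#}(e_\ell)\cdot R_{X,e_\ell}\phi_s$ can each be rewritten as linear differential expressions in $\Omega_s$---directly in the first case, via Ricci-flatness and $D^2\phi_s=\nabla^*\nabla\phi_s$ in the second, and via $R_{X,Y}\phi_s=\nabla_X\Omega_s(Y)-\nabla_Y\Omega_s(X)-\Omega_s([X,Y])$ in the third. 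This yields $(D^{T^*M})^2\cW_{g_s,\phi_s}(h_s) = M_s(\Omega_s)$ for some smoothly varying linear differential operator $M_s$.

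At this point the hypothesis on $\dim\ker D^{T^*M}_{g_s}$ enters decisively: constancy of the dimension makes the kernels vary smoothly in $s$, so the partial inverse $G_s:=(D^{T^*M}_{g_s}|_{(\ker D^{T^*M}_{g_s})^\perp})^{-1}$ depends smoothly on $s$, and self-adjointness of $D^{T^*M}_{g_s}$ places $D^{T^*M}\cW_{g_s,\phi_s}(h_s)$ inside $(\ker D^{T^*M}_{g_s})^\perp$, whence I can recover it as $G_s(M_s(\Omega_s))$. Substituting back produces a closed linear evolution equation $\frac{d}{ds}\Omega_s = L_s(\Omega_s)$ with $\Omega_0 = 0$. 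The hard part will then be to conclude: $L_s$ is pseudo-differential (a Green operator composed with a first-order differential operator), so I have to choose a Sobolev scale in which $L_s$ is bounded and invoke uniqueness for time-dependent linear ODEs, or alternatively run an $L^2$ Gr\"onwall estimate on $\|\Omega_s\|^2$ after a Weitzenb\"ock-type integration by parts. Either way $\Omega_s \equiv 0$, so $\phi_s$ is parallel for every $s\in I$.
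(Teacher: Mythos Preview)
Your proposal is correct and follows essentially the same route as the paper: differentiate $\Omega_s=\nabla^{g_s}\phi_s$, use \eqref{DW.formula} and Lemma~\ref{lem.dww} to reduce everything to controlling $D^{T^*M}\cW_{g_s,\phi_s}(h_s)$ by $(D^{T^*M})^2\cW_{g_s,\phi_s}(h_s)$, and invoke the constant-kernel hypothesis for that step. The paper phrases the latter as a uniform spectral-gap estimate $\|D^{T^*M}\sigma\|_{H^1}\le C_\gamma\|(D^{T^*M})^2\sigma\|_{L^2}$ on a compact subinterval and closes directly with an $H^1$ Gr\"onwall inequality---which is precisely your second alternative, and is cleaner than routing through a Green operator and abstract ODE uniqueness.
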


\begin{proof}[Proof of Lemma~\ref{lem.bg.par.par}]
Without loss of
generality we can assume that $I$ is compact. Using Rayleigh-Ritz type arguments, explained e.g.\ in detail in \cite{Nowaczyk2013}, one can show the existence of continuous functions $I\ni s\mapsto \lambda_i(s)\in \mR$, $\lambda_i(s)\leq \lambda_{i+1}(s)$ such that $(\lambda_i(s))_{i \in \mZ}$ are the eigenvalues (including the correct multiplicity) of $D^{T^*M}$.
If the kernel of  $D^{T^*M}$ has constant dimension $k$, then we can assume $\lambda_0(s)< \lambda_1(s)=\cdots=\lambda_k(s)=0<\lambda_{k+1}(s)$. Then for
  $$\gamma:=\min_{s\in I}\min\{-\lambda_0(s),\lambda_{k+1}(s)\}>0$$
the operator $D^{T^*M}_{g_s}$ has no eigenvalue in 
$(-\gamma,0)\cup (0,\gamma)$. Because of this, there are bounded operators 
\begin{equation}\label{D.is.an.iso}
\bigl(D^{T^*M}_{g_s}\bigr)^{-1}:L^2_{g_s}\cap \ker(D^{T^*M}_{g_s})^{\perp}\to  L^2_{g_s}\cap \ker(D^{T^*M}_{g_s})^{\perp}
\end{equation}
that invert $D^{T^*M}_{g_s}$ on the orthogonal complement of the kernel and that are uniformly bounded by $\gamma^{-1}$.
By elliptic theory, see e.g.\ \cite[Chapter III \textsection 5]{lawson.michelsohn:89} we have isomorphisms
\begin{equation}%\label{D.is.an.iso}
D^{T^*M}_{g_s}:H^1_{g_s}\cap \ker(D^{T^*M}_{g_s})^{\perp}\to  L^2_{g_s}\cap \ker(D^{T^*M}_{g_s})^{\perp}.
\end{equation}
Because of $\mathrm{im}(D^{T^*M}_{g_s})\subset \ker(D^{T^*M}_{g_s})^{\perp}$, there is a constant $C_0>0$ 
such that for any $s\in I$ and for any $\sigma\in \Gamma(\Sigma^{g_s}M\otimes T^*M)$ 
we have
\begin{equation}\label{von.Dzwei.zu.D}
\|D^{T^*M}_{g_s}\sigma\|_{H^1_{g_s}}\leq C_0\|\left(D^{T^*M}_{g_s}\right)^2\sigma\|_{L^2_{g_s}}.
\end{equation}
%Without loss of generality we can assume that $\vol(M,g_t)\equiv 1$ (Provide details, why volume change does not disturb!). 
Moreover, $g'_s\in \ker(\Delta_{E,g_s})$ by the facts collected in Subsection \ref{conventions}.
% and
% $ g'_s=c_s\cdot g_s+h_s$ where for each $s\in I$, $c_s\in\R$ and $h_s$ is a $TT$-tensor with $\Delta_E h_s=0$ It follows $\Delta_E g'_s=0$.
We calculate using Lemma~\ref{lem.dww}
\begin{align*}
  (D^{T^*M}_{g_s})^2\cW_{g_s,\phi_s}(g'_s)= & - 2 \sum_\ell \cW_{g_s,\nabla_{e_\ell}\phi_s}(\nabla_{e_\ell} g'_s)+\cW_{g_s, D^2 \phi_s}(g'_s)\\
  &-2(X\mapsto\sum_{\ell}({g}'_s)^{\sharp}(e_{\ell})\cdot R_{X,e_{\ell}}\phi_s),
\end{align*}
thus pointwise  $(D^{T^*M}_{g_s})^2\cW_{g_s,\phi_s}( g'_s)$ is a linear expression
in $\nabla \phi_s$ and its first derivative. Thus
  $$ \left\|\big(D^{T^*M}_{g_s}\bigr)^2 \cW_{(g_s,\phi_s)}(g'_s) \right\|_{L^2} \leq C \|\nabla\phi_s\|_{H^1},$$
  where $C$ depends on $\sup_{s\in I}\left\|{g}'_s\right\|_{C^1(g_s)}$.
 This implies, using Eq. \eqref{von.Dzwei.zu.D}:

\begin{equation}\label{Absch.D.nabla.phi}
\left\|D^{T^*M}_{g_s} \cW_{(g_s,\phi_s)}(g'_s) \right\|_{H^1}  \leq C C_\gamma\|\nabla\phi_s\|_{H^1}. 
\end{equation}

  We now differentiate $\nabla^{g_s}\phi_s$ using formulae \eqref{kappa-formula2} and \eqref{DW.formula} and Lemma~\ref{lem.kappa.einstein}.
  \begin{align*}
    \frac{d}{ds}&  \nabla^{g_s}\phi_s = \hat\kappa_{g_s,\phi_s}( g_s') +\nabla^{g_s}  \underbrace{\phi'_s}_{=0}\\
    & = \frac14\Biggl(D^{T^*M}_{g_s}\Bigl(\cW_{(g_s,\phi_s)}(g'_s)\Bigr)   - \phi_s\otimes \underbrace{\divv^{g_s}  g_s'}_{=0}  + 2 \nabla_{(g_s')^\#(\,\cdot\,)}\phi_s +  (g_s')^\#(\,\cdot\,)\cdot D^{g_s}\phi_s\Biggr)\\
   & =  \frac14 D^{T^*M}_{g_s} \cW_{(g_s,\phi_s)}( g_s')+\frac12 \nabla_{( g_s')^\#(\,\cdot\,)}\phi_s + \frac14  (g_s')^\#(\,\cdot\,)\cdot D^{g_s}\phi_s.
  \end{align*}
This implies, with Eq. \ref{Absch.D.nabla.phi}:
\begin{align*}\left\|\frac{d}{ds}  \nabla^{g_s}\phi_s\right\|_{H^1(g_s)} & \leq \tilde{C}\Biggl( \|\nabla^{g_s}\phi_s\|_{H^1(g_s)} + \underbrace{\|D^{g_s}\phi_s\|_{H^1(g_s)}}_{\leq \sqrt{n}\|\nabla^{g_s}\phi_s\|_{H^1(g_s)}}\Biggr)
%\\&
\leq C_1 \|\nabla^{g_s}\phi_s\|_{H^1(g_s)}
\end{align*}
where $\tilde{C}$ and $C_1$ depend on $\sup_{s\in I}\left\|{g}'_s\right\|_{C^1(g_s)}$.

We set $\ell(s):=\left\| \nabla^{g_s}\phi_s\right\|_{H^1(g_s)}^2$. In order 
to derive $\ell(s)$ one has to be aware that also the evaluating metric depends
on $s$. The dependence on the metrics effects $\ell(s)$ in the metric contractions, the volume element and in the covariant derivatives used to define by $\|\sigma\|_{H^1(g)}^2=
\|\sigma\|_{L^2(g)}^2+ \|\nabla^g\sigma\|_{L^2(g)}^2$. Due to compactness of $I$, these effects lead to a number $C_2>0$, constant in $s$, such that 
$$|\ell'(s)- 2 \langle \frac{d}{ds}  \nabla^{g_s}\phi_s, \nabla^{g_s}\phi_s\rangle_{H^1(g_s)}|\leq C_2 \ell(s)$$
 and we get
  $$\ell'(s)\leq  2 |\langle \frac{d}{ds}  \nabla^{g_s}\phi_s, \nabla^{g_s}\phi_s\rangle_{H^1(g_s)} |+C_2\ell(s) \leq  (2C_1 +C_2)\,\ell(s).$$
As $\ell(0)=0$ this implies with Gr\"onwall's inequality that $\ell(s)$ 
vanishes for all~$s$.
\end{proof}

\begin{rem}
In the proof above the derivative $\frac{d}{ds}\nabla^{g_s}\phi_s$ should be taken with some care. Here we derive an $s$-dependent family $\nabla^{g_s}\phi_s\in \Gamma(T^*M\otimes \Sigma^{g_s}M)$ with respect to the metric variation given by $s$. This is the BBGM-derivative in the spinorial part. On the cotangential part, one could also use a BBGM-kind of derivative, but this is not what was used. On the cotangential part, we simply used the derivative in the usual sense, i.e. the derivative of a curve in the vector space $\Gamma(T^*M)$.
\end{rem}

\begin{proof}[Proof of Proposition~\ref{prop.bg.par.par}]
%Note that on a Ricci-flat manifold the transition 
%functions of normal coordinates are (real-) analytic, and thus 
%Ricci-flat manifolds
%carry a unique analytic structure for which the metric is analytic. 
%We now fix the analytic structure on $M$.
In a first step we prove the Proposition for analytic families 
$g_s$, $s\in I$ of Ricci-flat metrics, in other words we assume that the map
$I\to \bigodot^2T^*M$ defined by  $g_s$ is analytic. 

Then $D^{T^*M}_{g_s}$ is an analytic family of operators. This implies that the eigenvalues $\lambda_i(s)$ of $D^{T^*M}_{g_s}$ can be numbered such that $s\mapsto\lambda_i(s)$ is analytic in $s$ \cite[Appendix A]{hermann.diss}.
Due to Theorem \ref{Nicolais_thm}, only finitely many $\lambda_i(s)$ have zero sets on a compact interval. Let $\mu:= \min \dim \ker D^{T^*M}_{g_s}$, and
  $J:=\{s\in I\mid \dim \ker D^{T^*M}_{g_s}>\mu\}$ which is a closed discrete subset due to analyticity. (We conjecture that $J=\emptyset$, but we were unable to prove it.)
%$J\subset I$, such that the function $s\mapsto \dim \ker D^{T^*M}_{g_s}$ attains its minimum on $I\setminus J$.
%
%\textcolor{red}{Show that on a compact interval only finitely many $\lambda_i(t)$ have zero sets. Maybe using the fact that the derivative of $\lambda_i(t)$ is bounded when $\lambda_i(t)$ is close to $0$?}
Let $I_0$ be a connected component of $I\setminus J$. The previous lemma states
that the Bourguignon-Gauduchon parallel transport along $s\mapsto g_s$ 
maps parallel spinor $\phi\in\Gamma(\Sigma^{g_s}M)$ to a parallel spinor $P_{s,r}(\phi)\in\Gamma(\Sigma^{g_r}M)$ for any $s,r\in I_0$.
Let $\bar{r}\in\partial I_0$. Then by continuity,
  $$P_{s,\bar r}(\phi)= \lim_{r\to\bar r}P_{s, r}(\phi)$$
is a parallel section of $\Gamma(\Sigma^{g_{\bar r}}M)$. We use the fact that 
the dimension of the space of parallel spinors is locally constant (see \cite{ammann.kroencke.weiss.witt:18}), thus for any $s\in I_0$ and and $r\in \bar I_0$ (including $r\in \partial I_0$) the monomorphism $P_{s,r}$ from parallel spinors in $\Gamma(\Sigma^{g_s}M)$ to parallel spinors  in $\Gamma(\Sigma^{g_r}M)$ is an isomorphism.
Thus BBGM parallel transport
perserves parallel spinors along~$\bar I_0$, and by an induction argument (using that $J$ is finite in compact intervals) this
also holds along $I$. The proposition is thus proven for analytic families $s\mapsto g_s$ and thus also for piecewise analytic families $s\mapsto g_s$.

Now, as the Riemannian metrics form an open cone in the vector space of bilinear forms, an arbitrary smooth family $s\mapsto g_s$ can be approximated by piecewise analytic paths in the $C^1$-norm. 
We claim that the BBGM parallel transport is 
continuous in this limit. This can be seen most easily in the universal spinor bundle formulation:
There, for $S M$ being the bundle of symmetric positive definite bilinear forms, a Clifford bundle $\pi^\univ: \Sigma M \rightarrow SM$ is constructed such that, for each $g \in \Gamma(SM) $, $ \Sigma^gM  $ is isomorphic as a Clifford bundle  to $g^* \Sigma M $. Furthermore, the vector bundle $\pi^\univ\colon\Sigma M \rightarrow SM$ carries a vertical (w.r.t. $S M \rightarrow M$) covariant derivative   $\nabla$ whose parallel transport $P$ is linked to the BBGM parallel transport $\hat{P}$ as follows: Let $g_\bullet:[0,\ell]\to \Gamma(S M)$, $s\mapsto g_s$ be a $C^1$-curve of Riemannian metrics on $M$.
  Let $p \in M$ and let $g|_p: s \mapsto g_s (p)$ and $\sigma \in (\pi^\univ)^{-1 } (g_0(p))=\Sigma^{g_0}_p M $, then for any $\tilde{\sigma}  \in \Gamma (\Sigma^{g_0}M)$ with $\tilde{\sigma}|_p = \sigma $ we have 
  $$\hat{P}_{g_\bullet} (\tilde{\sigma})\big|_p = P_{g|_p} (\sigma).$$
  The parallel transport along a curve with respect to a connection is given by a first order ordinary differential equation (ODE), satisfying the conditions of the theorem of Picard-Lindelöf. Using the universal spinor bundle formalism we argued that the BBGM-parallel transport is given by such an ODE and that its coefficient functions converge uniformly (i.e. in the $C^0$-norm) when a path of metrics converges in the $C^1$-norm to a limit path of metrics.\footnote{To be precise: we only need control of the norm
    $$\|g_\bullet\|:=\max_{s\in [0,\ell]} \left(\|g_s\|_{C^0(M)}+ \|\frac{d}{ds}g_s\|_{C^0(M)}\right).$$}
Thus the theorem of Picard-Lindelöf, taking into account that both $M$ and $[0,\ell]$ are compact,  implies that the BBGM parallel transport converges uniformly when we approximate a smooth path of metrics $s\mapsto g_s$ by piecewise analytic paths of metrics in the $C^1$-norm.
Thus the BBGM parallel transport also preserves parallel spinors along
the smooth family $s\mapsto g_s$.
\end{proof}

%%%%%%%%%%%%%%%%%%%%%%%%%%%%%%%%%%%%%%%%%%%%%%%%%%%%%%%%%%%%%%%%
%%%%%%%%%%%%%%%%%%%%%%%%%%%%%%%%%%%%%%%%%%%%%%%%%%%%%%%%%%%%%%%%
\section{Construction of solutions to the constraint equations}\label{sec.constr.sol}
%%%%%%%%%%%%%%%%%%%%%%%%%%%%%%%%%%%%%%%%%%%%%%%%%%%%%%%%%%%%%%%%
%%%%%%%%%%%%%%%%%%%%%%%%%%%%%%%%%%%%%%%%%%%%%%%%%%%%%%%%%%%%%%%%

In this section we want to use the BBGM connection to construct solutions of the constraint equation on suitable manifolds of the form $(M\times I, g_s+ds^2 )$ where~$I$ is an interval and where~$M$ is an $m=(n-1)$-dimensional manifold.

Assume that $g_s$ is a family of Ricci-flat metrics with divergence-free derivative and that for some $s_0$ there is a non-trivial parallel spinor $\phi_{s_0}$ on $(M,g_{s_0})$. By rescaling we can achieve that its norm is~$1$ in every point. We 
shift it in the $s$-direction parallely with the
BBGM parallel transport. By Prop. \ref{prop.bg.par.par} we obtain a family $\phi_s$ 
of $g_s$-parallel spinors of constant norm~$1$ on $M$, for every $s\in I$. This yields a fiberwise parallel section of $\Sigma_M^{(\#)}N$.  
Recall $\nu=\frac{\partial}{\partial s}$. 

%Recall furthermore that $\omega_\C^M$ picks a sign when changing the orientation of $M$, 

In the following we assume that $f:I\to \mR$ is a given smooth function and choose $s_0\in I$. We define $F(s):=\exp \left(-\frac12\int_{s_0}^s f(\sigma)\,d\sigma\right)$, i.e.\ $-\frac12 f(s)=F'(s)/F(s)$.

\textbf{Case $n$ is odd, i.e.\ $m=\dim M$ even.} By possibly changing the orientation and using Proposition~\ref{prop.change.or} in Appendix~\ref{sec.change.of.orientation} we can assume that all $\phi_s\in \Gamma(\Sigma^{g_s} M)$
have the positive parity for the splitting given by the volume element of $M$,
i.e. we can assume
$\omega_\C^M\cdot\phi_s=\phi_s$.
For every $(x,s)\in N=M\times I$ we use the map~$J$ from Section~\ref{subsec.hyp.surf} to get identifications $\Sigma_xM\cong {\Sigma_M}_{(x,s)} N\cong \Sigma_{(x,s)} N$. Again the Clifford multiplication for $(M,g_s)$ will be denoted by $\cdot$, and for the one for $(N,h)$, $h=g_s+ds^2$ we will use $\star$.
%Then for $X$ tangent to $M$
% 
%\begin{eqnarray*}
%  \nu\cdot \Phi&=&i\Phi\\
%  \nabla^N_{\nu}\Phi&=&0\\
%  \nabla^N_X\Phi &=& \frac12 W(X)\cdot \nu\cdot \Phi = \frac{i}2 W(X)\cdot \Phi
%\end{eqnarray*}
%
%This implies $-i\<\nu\cdot \Phi, \Phi\>= 1$. Obviously the real part of 
%$\<X\cdot \Phi, \Phi\>$ vanishes and
%$$\Im \<X\cdot \Phi, \Phi\>= - \Re \<X\cdot i\Phi, \Phi\>=- \Re \<X\cdot\nu\cdot  \Phi, \Phi\>=\Re \< \Phi, X\cdot\nu\cdot \Phi\>=0,$$
%thus the associated vector field (compare \cite[(1.7)]{leistner.lischewski:17})
%is $U_\Phi=\nu$.
%We obtain
%$$U_\Phi\cdot \Phi = i \Phi$$
%which is the constraint equation \cite[(1.8)]{leistner.lischewski:17}
%for $u_\Phi\equiv 1$.
%
%We now make a more general Ansatz: 
From Equation~\eqref{J.id.N} we get  $\nu\star \phi=i\phi$. Using Lemma~\ref{lem.hyperflaechenformel} we obtain
\begin{eqnarray*}
  \nabla^N_{\nu}\phi&=&0,\\
  \nabla^N_X\phi &=& \frac12 W(X)\star \nu\star \phi = \frac{i}2 W(X)\star \phi.
\end{eqnarray*}
We set 
\begin{equation}\label{def.constr.spinor.odd}
  \Psi(x,s):=%F(s)\Phi(x,s)=
   F(s) \phi_s(x).
\end{equation}
%Using Lemma \ref{lem.algeb.repr} and
Assuming that $X$ is tangent to $M$, we obtain
\begin{eqnarray*}
  \nu\star \Psi&=&i\Psi,\\
  \nabla^N_{\nu}\Psi&=&\frac{F'(s)}{F(s)}\Psi=-\frac12 f(s)\Psi= \frac{i}2 f(s) \nu\star\Psi,\\
  \nabla^N_X\Psi &=& \frac12 W(X)\star \nu\star \Psi = \frac{i}2 W(X)\star \Psi.
\end{eqnarray*}
Thus $\Psi$ is an imaginary $\ol W$-Killing spinor with $\ol W= W+f\nu^b\otimes \nu$, i.e.\ it satisfies~\eqref{eq.imag.killing}.

The first relation and the defining equation \eqref{def.constr.spinor.odd} also imply $-i\<\nu\star \Psi, \Psi\>= F^2$. For $X$ tangent to $M$, the real part of 
$\<X\star \Psi, \Psi\>$ vanishes and
$$\Im \<X\star \Psi, \Psi\>= - \Re \<X\star i\Psi, \Psi\>=- \Re \<X\star\nu\star  \Psi, \Psi\>\stackrel{(*)}{=}\Re \< \Psi, X\star\nu\star \Psi\>,$$
where in $(*)$ we used the skew-symmetry of Clifford multiplication with vectors twice and $X\star \nu=-\nu\star X$.
On the other hand one  $\Re \<X\star\nu\star  \Psi, \Phi\>=\Re \< \Phi, X\star\nu\star \Psi\>$, which in particular holds for $\Phi:=\Psi$. We thus get $\Im \<X\star \Psi, \Psi\>=0$.

Thus $- i \langle X \star \Psi, \Psi \rangle = F^2 h(\nu,X)$ for every vector $X$ and 
consequently, the Dirac current $U_\Psi$ of $\Psi$, defined by 
$h(U_\Psi, X) = - i \langle X \star \Psi, \Psi \rangle$ for every vector $X$ (see \eqref{def.dirac.current})
is $U_\Psi=F^2\nu$.
We obtain
$$U_\Psi\star \Psi = i F^2 \Psi,$$
which is the constraint equation \eqref{eq.Uphi}
for $u_\Psi:=\sqrt{h(U_\Psi,U_\Psi)} \equiv F^2$. Note that here we used \eqref{def.small.uphi} as definition for $u_\psi$.
We thus have obtained solutions of the constraint equations.

\textbf{Case $n$ even, i.e.\ $m=\dim M$ odd.} 
We use the map $J^{(\#),+}$ defined in Section~\ref{subsec.hyp.surf} to view $\Sigma_x M\cong \Sigma_{M (x,s)}N$ as a subbundle
of $\Sigma_{(x,s)} N$. In particular, $(x,s)\mapsto \phi_s(x)$ then yields a section  of constant length~$1$ of the bundle $\Sigma N\to N$.

Because of equations \eqref{J.id.even} and \eqref{J.id.mod.even} we have $\nu\star\phi= i\phi$.
Using Lemma~\ref{lem.hyperflaechenformel} we obtain 
\begin{eqnarray*}
  \nabla^N_{\nu}\phi&=&0\\
  \nabla^N_X\phi &=& \frac12 W(X)\star \nu\star \phi = \frac{i}2 W(X)\star \phi
\end{eqnarray*}
We set 
\begin{equation}\label{def.constr.spinor.even}
\Psi(x,s):=F(s) \phi_s(x).
\end{equation}
Then for $X$ tangent to $M$,
 \begin{eqnarray*}
  \nu\star \Psi&=&i\Psi,\\
  \nabla^N_{\nu}\Psi&=&\frac{F'(s)}{F(s)}\Psi=-\frac12 f(s)\Psi= \frac{i}2 f(s) \nu\star\Psi,\\
  \nabla^N_X\Psi &=& \frac12 W(X)\star \nu\star \Psi = \frac{i}2 W(X)\star \Psi.
\end{eqnarray*}
Thus, $\Psi$ is an imaginary $\ol W$-Killing spinor with $\ol W= W+f\nu^b\otimes \nu$, i.e.\ it satisfies \eqref{eq.imag.killing}. With the same arguments as in the other case, we can prove that \eqref{def.dirac.current}, \eqref{char.quad.cond}, %\eqref{eq.imag.killing}, 
\eqref{eq.Uphi}, and \eqref{def.small.uphi} are satisfied, i.e. we have found a solution to the constraint equations.
%This implies $-i\<\nu\star \Psi, \Psi\>= F^2$. Obviously the real part of 
%$\<X\star \Psi, \Psi\>$ vanishes and
%$$\Im \<X\star \Psi, \Psi\>= - \Re \<X\star i\Psi, \Psi\>=- \Re \<X\star\nu\star  \Psi, \Psi\>=\Re \< \Psi, X\star\nu\star \Psi\>=0.$$
%thus the Dirac current --- see \eqref{def.dirac.current} ---
%is $U_\Psi=F^2\nu$.
%We obtain
%$$U_\Psi\star \Psi = i F^2 \Psi$$
%which is the constraint equation \eqref{eq.Uphi}
%for $u_\Psi\equiv F^2$.
 
\begin{example}
	Let $M=T^{n-1}$, $g$ be a flat metric on $M$ and $\varphi$ a parallel spinor on it.
	\begin{itemize}
		\item Let $g_s\equiv g$ with $s\in I$. 
The BBGM parallel transport leaves $\varphi$ invariant and we obtain initial data to the Cauchy problem on the metric $g_s+ds^2$ on either $T^{n-1}\times \R$ or $T^{n-1}\times S^1 =T^{n}$. The Minkowski metric together with a parallel spinor (or in the $S^1$ case a $\mZ$-quotient of it) is then a solution of the associated Cauchy problem.
		\item Let $g_s=e^{2s}g$ with $s\in \R$. For the submanifolds $M\times \{s\}\subset N$ 
we have $W=-\id|_{TM}$ for alle $s\in \mR$. 
We take the function $f(s)=-1$, i.e.\ $F(s):=\exp(s/2)$ and $\ol W=-\id|_{TN}$.
The metric we obtain is now the hyperbolic metric $h=e^{2s}g+ds^2$ on $T^{n-1}\times \R$ together with an imaginary Killing spinor with Killing constant $-i/2$. The Lorentzian cone  $(T^{n-1}\times \R\times \mR_{>0},r^2 h-dr^2)$, where $r\in \mR_{>0}$, together with a parallel spinor solves solves the associated Cauchy problem.
Note that this cone is the quotient by a $\mZ^{n-1}$-action of $I_+(0)\subset \mR^{n,1}$, defined as the set of all future-oriented time-like vectors in the $(n+1)$-dimensional Minkowski space $\mR^{n,1}$.
\end{itemize}
\end{example}
\begin{rem}
In this example we have seen two different ways of reconstructing Lorentzian Ricci-flat metrics on quotients of subsets of Minkowski space together with a parallel spinor.  However, our construction allows many more interesting examples.
\end{rem}

\begin{example}\ 
\begin{itemize} 
  \item Any imaginary Killing spinor on a complete, connected Riemannian spin manifold arises this way. 
This was proven by Baum, Friedrich, Grunewald and Kath in \cite[Chap.~7]{baum.friedrich.grunewald.kath:91}, more precisely in Theorem~1 on page 160  and Cor. 1 on page 167 \cite[Chap.~7]{baum.friedrich.grunewald.kath:91}. 
  \item This was generalized by Rademacher \cite{rademacher:91}, see also \cite[Theorem A.4.5]{ginoux:book}. Rademacher proved that any generalized imaginary Killing spinor with $W=\alpha \id$, $\alpha\in C^\infty(N,\mR)$ arises by our construction.
% later  Further work on imaginary Killing spinors on such warped product was done by Rademacher \cite{rademacher:91} 
% \cite{ginoux:book} (Ginoux-Buch [209]). 
\item Our construction generalizes previous constructions of imaginary Killing spinors, as e.g. \cite[Prop.~4.6 and Cor.~4.7]{ginoux.habib.raulot:15}.
\end{itemize}
\end{example}

%%%%%%%%%%%%%%%%%%%%%%%%%%%%%%%%%%%%%%%%%%%%%%%%%%%%%%%%%%%%%%%%%%%%%%%%%%
\section{From curves in the moduli space to initial data sets}\label{sec.curves.moduli}
%%%%%%%%%%%%%%%%%%%%%%%%%%%%%%%%%%%%%%%%%%%%%%%%%%%%%%%%%%%%%%%%%%%%%%%%%%

Let $\Diff_0(M)$ be the identity component of the diffeomorphism group of $M$, acting on the space $\mathcal{M}_\parallel(M)$ of structured Ricci-flat metrics by pullback (as usual, a semi-Riemannian manifold $(M,g)$ is called structured iff its semi-Riemannian universal covering $(\tilde{M}, \tilde{g}) $ admits a parallel spinor). Furthermore, let $\Mod_\parallel(M):=\mathcal{M}_\parallel(M)/\Diff_0(M)$ be the associated 
premoduli space. 
It was shown in \cite{ammann.kroencke.weiss.witt:18} (using previous work about the special case of simply connected manifolds with irreducible holonomy)
that $\Mod_\parallel(M)$ ``naturally'' carries the structure of a 
finite-dimensional smooth manifold. The smooth structure on $\Mod_\parallel(M)$ 
can be described by the following properties
\begin{itemize}
\item $\Mod_\parallel(M)$ carries the quotient topology, if we equip $\mathcal{M}_\parallel(M)$ and $\Diff_0(M)$ with the standard Fr\'echet topology, 
\item any smooth family $N\to \mathcal{M}_\parallel(M)$, $y\mapsto g_y$ yields a smooth map   $N\to\Mod_\parallel(M)$, 
$y\mapsto [g_y]$,
\item in the case $N=(a,b)$ one has $\frac{d}{ds}|_{s=s_0}[g_s]=0$ if and only if $\frac{d}{ds}|_{s=s_0}g_s$ is tangent to the $\Diff_0(M)$-orbit 
through $g_{s_0}$.
\end{itemize}

If $I$ is an interval, then any smooth curve $I\to \Mod_\parallel(M)$ can be written as $s\mapsto [g_s]$ with a family of metrics $\divv^{g_{s_0}}\left(\frac{d}{ds}|_{s=s_0}g_s\right)=0$, see Appendix~\ref{app.div.free}.

The main results of this article is a procedure to construct initial data for the constraint equations for Lorentzian metrics with parallel spinors. 

\begin{mainconstruction}[Initial data on an open manifold]\label{mainconstrone}
Let $I$ be an interval. For any smooth curve $I\to \Mod_\parallel(M)$ $s\mapsto [g_s]$ and every positive smooth function $F:I\to \mR_{>0}$ we obtain a solution of
the initial data equations \eqref{eq.imag.killing} and \eqref{eq.Uphi} on $M\times I$ with norm $F(t)$ at any $(x,t)\in M\times I$. 
\end{mainconstruction}
Here the metric on  $M\times I$ is given by $g_s+ds^2$ provided that  $g_s$ is chosen such that $\divv^{g_{s_0}}\frac{d}{ds}|_{s=s_0}g_s=0$ and the spinor 
is given by \eqref{def.constr.spinor.odd} resp.\ \eqref{def.constr.spinor.even}.
In particular the spinor can be normalized such that it has norm $F(t)$ at any $(x,t)\in M\times I$. As derived in the preceding section, \eqref{eq.imag.killing} are then satisfied, as well as  \eqref{def.dirac.current}, \eqref{char.quad.cond}, %\eqref{eq.imag.killing}, 
\eqref{eq.Uphi}, and \eqref{def.small.uphi}.

%The solution then also satisfies \eqref{} and \eqref{} ??? due to Appendix~\ref{appendix.indep}. 

The situation is slightly more complicated if we want to obtain solutions of the constraint equations on a closed manifold. We start with a closed curve $S^1\cong\mR/L\mZ\to \Mod_\parallel(M)$. 
If we identify $\mR/L\mZ$ with $[0,L]/0\sim L$, then every curve $\mR/L\mZ\to \Mod_\parallel(M)$ can be written as $[0,L]\to \Mod_\parallel(M)$, $s\mapsto [g_s]$ with $\divv\left(\frac{d}{ds}g_s\right)=0$, but in general we will have $g_0\neq g_L$ although $(M,g_0)$ and $(M,g_L)$ are isometric with respect to an isometry $\zeta\in\Diff_0(M)$. 
We then glue $(M\times\{0\},g_0)$ with  $(M\times \{L\},g_L)$ isometrically using the diffeomorphism $\zeta\in\Diff_0(M)$. This yields a closed Riemannian manifold $(N,h)$ diffeomorphic to $M\times S^1$. 
In order to equip it with a spin structure we have to lift 
$d\zeta^{\otimes n}:P_{\SO}(M,g_0)\to P_{\SO}(M,g_L)$ to a map between the corresponding spin structures $\zeta_\#:P_{\Spin}(M,g_0)\to P_{\Spin}(M,g_L)$. 
This yields a spin structure and a spinor bundle on $N$. 
Let $F:S^1\to \mR_{>0}$ be given. For any parallel spinor $\phi_0$ on $(M,g_0)$ 
equation \eqref{def.constr.spinor.odd} resp.\ equation \eqref{def.constr.spinor.even} yields a generalized imaginary Killing spinor $\Psi$ on $M\times [0,L]$ as in Main Construction~\ref{mainconstrone}. The gluing described above allows to view $\phi_L:=\Psi_{M\times \{L\}}$ as a parallel spinor on $(M,g_0)$. 
However, in general $\phi_L$ will differ from $\phi_0$. Let $\Gamma_\parallel(\Sigma_{g_0}M)$ denote the space of parallel spinors on $(M,g_0)$. Then 
$\phi_0\mapsto \phi_L$ yields a unitary map  $P:\Gamma_\parallel(\Sigma_{g_0}M)\to \Gamma_\parallel(\Sigma_{g_0}M)$. The map $P$ does neither depend on $F$ nor on the parametrization of the curve $s\mapsto [g_s]$. We say that $(M,g_s,\phi_0)$ satisfies the \emph{fitting condition} if $P(\phi_0)=\phi_0$ for a suitable choice of spin structure
on $N$.
The fitting condition is always satisfied in the following cases:
\begin{enumerate}[(1)]
\item $(M,g_0)$ is a $7$-dimensional manifold with holonomy $G_2$
\item $(M,g_0)$ is an $8$-dimensional manifold with holonomy $\Spin(7)$
\item $(M,g_0)$ is a Riemannian product of manifolds of that kind and of at most one factor diffeomorphic to $S^1$.
\item finite quotients of such manifolds
\end{enumerate}

As this statement is not within the core of this article, we only sketch the proof. In the first two cases the spinor bundle $\Sigma_g M$ is the complexification of the real spinor bundle $\Sigma_g^\mR M$, and thus $\Gamma_\parallel(\Sigma_g M)=\Gamma_\parallel(\Sigma_g^\mR M)\otimes \mC$. The real spinor representations of $G_2$ and $\Spin(7)$ on $\Sigma_n^\mR$ have a 1-dimensional invariant subrepresentation, thus $\dim_\mR \Gamma_\parallel(\Sigma_g^\mR M)=1$. Thus $P$ is either $+\id$ or $-\id$, and the $+$-sign can be achieved by a suitable choice of the lift $\zeta_\#$.  On $S^1$ it follows from a direct calculation. 

The map $P$ behaves ``well'' under taking products and finite quotients, thus the other two statements follow as well. 
%\green{Das k\"onnte man mit dem USB auch gut hinschreiben, aber vlt lassen wir das hier besser.}

For manifolds with a least one factor of holonomy $\SU(k)$ or $\Sp(k)$, or also for tori of dimension $>1$, however, we expect that generically the fitting condition does not hold. In this case, we expect that the space of closed paths $s\to [g_s]$ for wich $P$ has finite order (in the sense $\exists \ell\in \mN:\, P^\ell=\id$) is dense in the space of all closed paths $s\to [g_s]$ with respect to the $C^\infty$-topology. This is in fact a consequence of work in progress by Bernd Ammann, Klaus Kröncke and Hartmut Wei\ss{}.

Then passing to an $\ell$-fold cover of~$N$ obtained from running along the path $s\to [g_s]$ not just once, but $\ell$ times, we obtain a solution of the constraint equation on $M\times S^1$.

\begin{mainconstruction}[Initial data on a closed manifold]\label{mainconsttwo}
Let $L>0$. Let $\mR/L\mZ\to \Mod_\parallel(M)$, $[s]\mapsto [g_s]$  be a smooth path and let $\phi_0\in \Gamma_\parallel(\Sigma^{g_0} M)$ be given such  that $(M,g_s,\phi_0)$
satisfies the fitting condition. 
Then for any function $F:S^1\to \mR_{>0}$ we obtain a solution of the initial data equations \eqref{eq.imag.killing} and \eqref{eq.Uphi} on $M\times S^1$. 
\end{mainconstruction}

It also seems interesting to us to allow a slight generalization of our initial
problem, by considering spin$^c$ spinors with a flat associated line bundle instead of spinors in the usual sense. Assume that $\theta\in \mC$ has norm $1$.
Identifying $(v,t)\in \mC\times \mR$ with $(\theta v,t+L)$ yields a complex line bundle $L_\theta$ over $S^1 =\mR/L\mZ$. On $L_\theta$ we choose the connection such that local sections with constant $v$ are parallel. By pull back we obtain complex line bundles with flat, metric connections on $N=M\times S^1$ and on $\ol N=N\times (-\ep,\ep)$. These line bundles will also be denoted by $L_\theta$. The bundle $\Sigma_h N\otimes L_\theta$ resp.\ $\Sigma_{\bar h} {\ol N}\otimes L_\theta$ is then a spin$^c$-spinor bundle with flat associated bundle $L_\theta$. The objects tensored by $L_\theta$ will be called $L_\theta$-twisted. All the results of this article immediately generalize to $L_\theta$-twisted spinors.
We ask for $L_\theta$-twisted parallel spinors, i.e.\ parallel sections of  $\Sigma_{\bar h} {\ol N}\otimes L_\theta$ instead of parallel spinors in the usual sense. This leads to $L_\theta$-twisted constraint equations, and the $L_\theta$-twisted Cauchy problem can be solved the same way as the untwisted. 

Let $P\in \U\bigl(\Gamma_\parallel(\Sigma_{g_0}M)\bigr)$ be as above. As $P$ is unitary, there is a basis of $\Gamma_\parallel(\Sigma_{g_0}M)$ consisting of eigenvectors of $P$ for complex eigenvalues of norm $1$.

\begin{mainconstruction}[Spin$^c$-version]\label{mainconstrthree}
Let $L>0$. Let $\mR/L\mZ\to \Mod_\parallel(M)$, $[s]\mapsto [g_s]$  be a smooth path. Let $\phi_0\in \Gamma_\parallel(\Sigma_{g_0}M)$ be an eigenvalue of $P$ to the eigenvalue $\theta$. Then for any function $F:S^1\to \mR_{>0}$ we obtain
 a solution of the $L_\theta$-twisted version of the constraint equations \eqref{eq.imag.killing} and \eqref{eq.Uphi} on $M\times S^1$. 
\end{mainconstruction}

%%%%%%%%%%%%%%%%%%%%%%%%%%%%%%%%% 
%%%%%%%%%%%%%%%%%%%%%%%%%%%%%%%%%%
\appendix
%%%%%%%%%%%%%%%%%%%%%%%%%%%%%%%%%%%%%%%%%%%%%%%%%%%%%%%%%%%%%%%%
%%%%%%%%%%%%%%%%%%%%%%%%%%%%%%%%%%%%%%%%%%%%%%%%%%%%%%%%%%%%%%%%

%%%%%%%%%%%%%%%%%%%%%%%%%%%%%%%%%%%%%%%%%%%%%%%%%%%%%%%%%%%%%%%%
%%%%%%%%%%%%%%%%%%%%%%%%%%%%%%%%%%%%%%%%%%%%%%%%%%%%%%%%%%%%%%%%
\section{Independence of the constraint equations}\label{appendix.indep}
%%%%%%%%%%%%%%%%%%%%%%%%%%%%%%%%%%%%%%%%%%%%%%%%%%%%%%%%%%%%%%%%
%%%%%%%%%%%%%%%%%%%%%%%%%%%%%%%%%%%%%%%%%%%%%%%%%%%%%%%%%%%%%%%%

In this appendix we want to show that the constraint equations \eqref{def.dirac.current}--\eqref{def.small.uphi} presented in the introduction are not independent equations. We will show that all of them follow from  \eqref{eq.imag.killing} and a rewritten version of \eqref{eq.Uphi}. 
In particular, we will see that for a generalized imaginary Killing spinor $\phi$ equation \eqref{eq.Uphi} implies  \eqref{char.quad.cond}, unless the vector field~$U_\phi$ vanishes everywhere. 
In the introduction  \eqref{def.dirac.current}--\eqref{def.small.uphi} are a mixture of definitions and relations. Let us rewrite them in a form which is more suitable to clarify their dependences.

We assume that $(N,h)$ is a connected Riemannian spin manifold. 
Let $\Sigma N\to N$ be the associated spinor bundle. Compared to the introduction we slightly simplify our notation: we write $\cdot$ here for the Clifford multiplication instead of writing $\star$ which was used in the introduction in order to distinguish it from other Clifford multiplications.

In the following we have a spinor $\phi$, i.e.\ a smooth section $\phi\in \Gamma(\Sigma N)$, a real-valued smooth function $u\in C^\infty(N)$, a vector field $U\in \Gamma(TN)$ and an endomorphism $W\in \Gamma(TN)$.
Be aware that we simplify again the notation, by writing $W$ for the endomorphism which was called $\ol W$ in the introduction.  The equations \eqref{def.dirac.current}--\eqref{def.small.uphi} turn into

\begin{align}
\kern12mm h(U,X)& = -i \<X\cdot \phi,\phi\>,&&\forall X\in TN,,&\kern12mm\label{def.dirac.current.mod}\\
i\phi(p)&\in \{V\cdot\phi\mid V\in T_pN\},&&\forall p\in N,\label{char.quad.cond.mod}\\
\nabla^N_X\phi& = \frac{i}2 W(X)\cdot \phi,&&\forall X\in TN,\label{eq.imag.killing.mod}\\
  U \cdot\phi & = i u\,\phi,&& \label{eq.Uphi.mod}\\
   u^2 &=  h(U,U).&&\label{def.small.uphi.mod}
\end{align}

We want to discuss the independence of the constraint equations  \eqref{def.dirac.current.mod} to 
\eqref{def.small.uphi.mod}. We start with some elementary lemmata.

\begin{lemma}\label{FirstLemma}
If \eqref{eq.Uphi.mod} is satisfied for $U$, $u$ and $\phi$ defined in some $p\in N$. Then we have (in this $p\in N$):
  $$\phi=0 \text{ or } h(U,U)=u^2$$
\end{lemma}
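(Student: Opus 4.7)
The plan is to apply Clifford multiplication by $U$ to both sides of the pointwise relation \eqref{eq.Uphi.mod} and exploit the Clifford relation $U\cdot U = -h(U,U)\cdot\id$, which is the Riemannian sign convention used throughout the paper (consistent with the anti-symmetry of Clifford multiplication by vectors stated in the introduction).

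More precisely, I would argue as follows. Working at the fixed point $p\in N$, multiply \eqref{eq.Uphi.mod} from the left by $U$:
\begin{equation*}
U\cdot(U\cdot\phi) \;=\; U\cdot(iu\,\phi) \;=\; iu\,(U\cdot\phi) \;=\; iu\cdot iu\,\phi \;=\; -u^2\,\phi.
\end{equation*}
On the other hand, the Clifford relation $U\cdot U = -h(U,U)$ in $\CCl(T_pN)$ gives
\begin{equation*}
U\cdot(U\cdot\phi) \;=\; -h(U,U)\,\phi.
\end{equation*}
Subtracting, we obtain $\bigl(h(U,U)-u^2\bigr)\phi = 0$, and the dichotomy follows since $h(U,U)-u^2$ is a scalar.

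There is essentially no obstacle here; the only thing to be mindful of is using the correct Riemannian sign convention for $X\cdot X$ (here $U\cdot U = -h(U,U)$, not $+h(U,U)$), so that the two computations of $U\cdot U\cdot\phi$ combine to give $(h(U,U)-u^2)\phi=0$ rather than a vacuous identity. The statement is purely pointwise and algebraic, requiring neither the covariant-derivative equation \eqref{eq.imag.killing.mod} nor the preceding Lemma~\ref{lemma.fast.killing.verschw}; those enter only in the subsequent lemmas which promote this pointwise dichotomy to a global one using connectedness of $N$.
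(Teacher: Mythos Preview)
Your proof is correct and uses essentially the same idea as the paper: both exploit the Clifford relation $U\cdot U=-h(U,U)$ to ``square'' the eigenvalue equation $U\cdot\phi=iu\,\phi$. The paper routes this through the Hermitian norm, computing $h(U,U)\langle\phi,\phi\rangle=\langle U\cdot\phi,U\cdot\phi\rangle=u^2\langle\phi,\phi\rangle$, whereas you apply $U\cdot$ directly to both sides; your version is marginally more elementary in that it does not invoke the inner product or skew-adjointness of Clifford multiplication, but the content is the same.
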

\begin{proof}
  $$h(U,U)\,\<\phi,\phi\>=  \<-U\cdot U\cdot \phi,\phi\>= \< U\cdot \phi,U\cdot\phi\>= \<iu\,\phi,iu\,\phi\>=u^2\,\<\phi,\phi\>.$$
\end{proof}

\begin{lemma}
Let $p \in N$ and let \eqref{eq.Uphi.mod} be satisfied for $U \in T_pN$, $u \in \R$ and $\phi \in \Sigma_g N_p$. 
We assume that $U_\phi$ is defined by \eqref{def.dirac.current}, i.e.  \eqref{def.dirac.current.mod} holds for $U_\phi$ instead of $U$ at the point $p$. Then $U$ and $U_\phi$ are linearly dependent.
\end{lemma}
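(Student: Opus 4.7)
The plan is to argue pointwise at $p$, splitting into two cases.

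If $\phi(p)=0$, then $\langle X\cdot\phi,\phi\rangle = 0$ for every $X\in T_pN$, so \eqref{def.dirac.current.mod} forces $U_\phi(p)=0$, and linear dependence with $U$ is trivial.

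So assume $\phi(p)\neq 0$. The main step I would carry out is to show that $\langle X\cdot\phi,\phi\rangle = 0$ for every $X\in T_pN$ with $h(X,U)=0$; by \eqref{def.dirac.current.mod} this gives $h(U_\phi,X)=0$ for all $X\in U^\perp$, which exactly says that $U_\phi$ is a scalar multiple of $U$. To prove this key step, I would evaluate $\langle X\cdot U\cdot\phi,\phi\rangle$ in two different ways. Using \eqref{eq.Uphi.mod} directly, $X\cdot U\cdot\phi = X\cdot(iu\phi) = iu(X\cdot\phi)$, so the quantity equals $iu\,\langle X\cdot\phi,\phi\rangle$. Alternatively, $h(X,U)=0$ yields the Clifford relation $X\cdot U = -U\cdot X$, and combining this with skew-Hermiticity of Clifford multiplication by real tangent vectors on a Riemannian spinor bundle gives
\[
\langle X\cdot U\cdot\phi,\phi\rangle = -\langle U\cdot X\cdot\phi,\phi\rangle = \langle X\cdot\phi,U\cdot\phi\rangle = \overline{iu}\,\langle X\cdot\phi,\phi\rangle = -iu\,\langle X\cdot\phi,\phi\rangle,
\]
using $u\in\R$. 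Equating the two expressions gives $2iu\,\langle X\cdot\phi,\phi\rangle=0$.

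Now I split on $u$. If $u\neq 0$, then $\langle X\cdot\phi,\phi\rangle=0$ for all $X\perp U$, and the main step is established. If $u=0$, then Lemma~\ref{FirstLemma} applied at $p$ (in the nontrivial case $\phi(p)\ne 0$) yields $h(U,U)=u^2=0$, and since $h$ is Riemannian this forces $U=0$ at $p$, making linear dependence automatic.

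I do not foresee a substantial obstacle; the only subtle point is the conjugation sign with the Hermitian convention fixed in Section~\ref{conventions} (complex linear in the first slot, anti-linear in the second), which is where $\overline{iu}=-iu$ enters. Once this is handled correctly, the rest is a direct fiberwise computation.
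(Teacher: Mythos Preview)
Your proposal is correct and follows essentially the same route as the paper: both arguments reduce to showing $h(U_\phi,X)=0$ for $X\perp U$ by computing $\langle X\cdot U\cdot\phi,\phi\rangle$ using the Clifford anticommutation relation, the skew-Hermiticity of Clifford multiplication, and \eqref{eq.Uphi.mod}, with Lemma~\ref{FirstLemma} handling the degenerate cases. The only cosmetic difference is that the paper first shows $\langle X\cdot U\cdot\phi,\phi\rangle$ is real (via $2\Re\langle X\cdot U\cdot\phi,\phi\rangle=0$) and then evaluates it as $-u\,h(U_\phi,X)$, whereas you evaluate the same quantity in two ways and equate them.
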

In that sense \eqref{eq.Uphi.mod} implies \eqref{def.dirac.current.mod} up to a constant. Obviously for globally defined $U$, $u$ and $\phi$, the proportionality factor does not have to be constant. We obtain $\lambda_1 U=\lambda_2U_\phi$ for some nowhere vanishing function $\lambda:M\to \mR^2$.
\begin{proof}
W.l.o.g.\ $U\neq 0$, $\phi\neq 0$ at $p\in N$. By Lemma \ref{FirstLemma} it follows that $u(p) \neq 0$. We calculate for $X\perp U$:
 \begin{eqnarray*}
   0& =& -2h(X,U)\,\<\phi,\phi\>= \<X\cdot U\cdot \phi,\phi\> + \< U\cdot X\cdot  \phi,\phi\>\\ 
    &=& \<X\cdot U\cdot \phi,\phi\> + \<\phi,  X\cdot U\cdot  \phi\> = 2 \Re\,\<X\cdot U\cdot \phi,\phi\>.
 \end{eqnarray*}
Furthermore
  $$\<X\cdot U\cdot \phi,\phi\>= \<iu\,X\cdot \phi,\phi\>=iu \,\<X\cdot \phi,\phi\>=-u\,h(U_\phi,X).$$
This implies $h(U_\phi,X)=0$.
 \end{proof}

\begin{lemma}[{In \cite[Lemma~5 in Sec.~5.2]{baum.leistner.lecture.notes:HH}}]\label{dirac.curr.prop.equiv}
Assume $U$ and $\phi$ satisfy \eqref{def.dirac.current.mod}. Then \eqref{char.quad.cond.mod} is equivalent to
 $h(U,U) =\|\phi\|^4$.
%satisfies $h(U,U) =\|\phi\|^4$ if and only if we have for any $p\in N$ 
%\begin{equation}\label{char.quad.cond.rep}
%i\phi\in \{V\cdot \phi\mid V\in T_pN\}.
%\end{equation}
\end{lemma}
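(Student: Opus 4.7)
The statement is pointwise, so I fix $p\in N$ and argue in the fibre; the case $\phi(p)=0$ is trivial (both conditions hold since $\{V\cdot\phi\}=\{0\}$ and $\|\phi\|^{4}=0$), so I assume $\phi(p)\neq 0$. The plan is to reduce both directions to Cauchy--Schwarz in $\Sigma_pN$, exploiting the antisymmetry of Clifford multiplication by vectors in the Riemannian case. Specifically, two calculational building blocks will be established first:
\begin{align*}
\|V\cdot\phi\|^{2} &= -\langle V\cdot V\cdot\phi,\phi\rangle = h(V,V)\,\|\phi\|^{2}\qquad (V\in T_pN),\\
\langle V\cdot\phi,\phi\rangle &= i\,h(U,V)\qquad\text{by \eqref{def.dirac.current.mod}},
\end{align*}
so in particular $\langle U\cdot\phi,\phi\rangle=i\,h(U,U)$ and $\|U\cdot\phi\|^{2}=h(U,U)\,\|\phi\|^{2}$.

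For the direction $h(U,U)=\|\phi\|^{4}\Rightarrow\eqref{char.quad.cond.mod}$, I will apply Cauchy--Schwarz:
\[
h(U,U)^{2}=|\langle U\cdot\phi,\phi\rangle|^{2}\le \|U\cdot\phi\|^{2}\|\phi\|^{2}=h(U,U)\,\|\phi\|^{4}.
\]
The assumption forces equality, so $U\cdot\phi$ and $\phi$ are $\mC$-collinear. Writing $U\cdot\phi=\lambda\phi$ and pairing with $\phi$ gives $\lambda\|\phi\|^{2}=i\,h(U,U)=i\|\phi\|^{4}$, hence $\lambda=i\|\phi\|^{2}$ and $i\phi=\|\phi\|^{-2}U\cdot\phi\in\{V\cdot\phi\mid V\in T_pN\}$.

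For the converse, assume $i\phi=V\cdot\phi$ for some $V\in T_pN$. Computing norms gives $\|\phi\|^{2}=h(V,V)\|\phi\|^{2}$, so $h(V,V)=1$; pairing with $\phi$ and using \eqref{def.dirac.current.mod} gives $h(U,V)=\|\phi\|^{2}$. It remains to see that $U$ is parallel to $V$. Here is the one spot I expect to need a bit of care: for any $X\in T_pN$ with $h(X,V)=0$ the Clifford relations give $X\cdot V=-V\cdot X$, so
\[
\langle X\cdot\phi,V\cdot\phi\rangle=-\langle V\cdot X\cdot\phi,\phi\rangle=\langle X\cdot V\cdot\phi,\phi\rangle,
\]
while on the other hand $\langle X\cdot\phi,V\cdot\phi\rangle=\langle X\cdot\phi,i\phi\rangle=-i\langle X\cdot\phi,\phi\rangle$ and $\langle X\cdot V\cdot\phi,\phi\rangle=\langle X\cdot i\phi,\phi\rangle=i\langle X\cdot\phi,\phi\rangle$. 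Combining these identities forces $\langle X\cdot\phi,\phi\rangle=0$, i.e.\ $h(U,X)=0$ for all $X\perp V$. Hence $U=\|\phi\|^{2}V$ and $h(U,U)=\|\phi\|^{4}h(V,V)=\|\phi\|^{4}$.

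The main (small) obstacle is bookkeeping in the antisymmetry/Hermiticity conventions (the sign flip produced by the antilinear slot, plus the fact that Clifford multiplication by vectors is skew-adjoint in Riemannian signature); everything else reduces to elementary linear algebra in a single fibre.
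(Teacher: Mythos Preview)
Your proof is correct. Both directions are handled cleanly: Cauchy--Schwarz for $h(U,U)=\|\phi\|^4\Rightarrow\eqref{char.quad.cond.mod}$, and a direct computation showing $U\parallel V$ for the converse. The sign/Hermiticity bookkeeping is right.

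The paper's argument is genuinely different. Instead of splitting into two implications, it writes the real-orthogonal decomposition $i\phi = W\cdot\phi + \psi$ with $\psi\perp E_\phi:=\{V\cdot\phi\mid V\in T_pN\}$ (a real subspace of $\Sigma_pN$), observes that the vectors $\rho_j:=\|\phi\|^{-1}e_j\cdot\phi$ form a real-orthonormal basis of $E_\phi$, and computes $\langle i\phi,\rho_j\rangle = \|\phi\|^{-1}h(U,e_j)$. Summing gives the single identity
\[
h(U,U)=\|\phi\|^{2}\bigl(\|\phi\|^{2}-\|\psi\|^{2}\bigr),
\]
from which both directions follow at once: $h(U,U)=\|\phi\|^{4}$ iff $\psi=0$.

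Your approach is arguably more elementary, avoiding the need to recognise $E_\phi$ as a real subspace with its induced orthonormal frame. The paper's approach, on the other hand, buys the explicit defect formula $\|\phi\|^{4}-h(U,U)=\|\phi\|^{2}\|\psi\|^{2}$, which the paper reuses in Lemma~\ref{lemma.bl.lectures} to interpret the constant $q_\phi$ and to connect with the type~I/type~II dichotomy for imaginary Killing spinors.
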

\begin{proof}We prove the statement in each $p\in N$, so we consider $U\in T_pN$ and $\phi\in \Sigma_pN$. W.l.o.g.\ $\phi\neq 0$.  If $(e_j)$ is an orthonormal basis of $T_pN$, then $(\rho_j)$ 
with $\rho_j:=\frac{1}{\|\phi\|}e_j\cdot \phi$ is an orthonormal basis of  $E_{\phi}:=\{V\cdot \phi\mid V\in T_pN\}.$
We write 
  $$i\phi = W\cdot \phi +\psi$$
for some $W\in T_pN$ and $\psi\perp  \{V\cdot \phi\mid V\in T_pN\}$. 
Thus 
  $$\<i\phi,\rho_j\>=\frac{i}{\|\phi\|} \<\phi,e_j\cdot\phi\>=\frac{1}{\|\phi\|}h(U,e_j).$$
We conclude 
  $$h(U,U)=\sum_{j=1}^n h(U,e_j)^2= \|\phi\|^2 \sum_{j=1}^n\<i\phi,\rho_j\>^2=  \|\phi\|^2 \left(\|i\phi\|^2 -\|\psi\|^2\right).$$
This implies that $h(U,U)= \|\phi\|^4$ if and only if $\psi=0$.
\end{proof}

Now let $W\in \End(TN)$, not necessarily symmetric. Recall that our notation is slightly simplified if compared to the introduction: the $\ol W$ of the introduction is $W$ in this appendix.

\begin{proposition}[Dichotomy Proposition]\label{prop.dich}
Let $(N,h)$ be a connected Riemannian manifold and let $W\in \Gamma(\End(TN))$ be a field of endomorphisms. We assume that $U=U_\phi$ for $\phi$ satisfy \eqref{def.dirac.current.mod}, \eqref{eq.imag.killing.mod} and \eqref{eq.Uphi.mod}
%\begin{enumerate}[{\rm (a)}]
%\item\label{rep.imag.kill} $\nabla_X \phi = \frac{i}{2} W(X) \cdot \phi $ for all $X \in TN$, and 
%\item\label{rep.Uphi.f} $U_\phi \cdot \phi = if \phi $
%\end{enumerate}
for some $u\in C^\infty(N)$.

Then $W^T(U_\phi) =- \grad \langle\phi, \phi\rangle$. If $U_\phi\equiv 0$, then $\|\phi\|$ is constant. If $U_\phi \not\equiv 0$, then $U_\phi$ and $\phi$ vanish nowhere and $u= \|U_\phi\|=\langle\phi, \phi\rangle=\|\phi\|^2$. 
\end{proposition}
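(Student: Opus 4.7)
The plan is to deduce the whole dichotomy from two short computations plus an application of Lemma~\ref{lemma.fast.killing.verschw}.

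\medskip
\noindent\textbf{Step 1: The gradient formula.} First I would compute $\partial_X\langle\phi,\phi\rangle$ using \eqref{eq.imag.killing.mod} together with the skew-adjointness of Clifford multiplication on the Riemannian spinor bundle. Since $\langle V\cdot\phi,\phi\rangle$ is purely imaginary, equation \eqref{def.dirac.current.mod} gives $\langle V\cdot\phi,\phi\rangle = ih(U_\phi,V)$ for every $V\in TN$. Substituting $V=W(X)$ yields
\begin{equation*}
\partial_X\langle\phi,\phi\rangle \;=\; 2\Re\left\langle\tfrac{i}{2}W(X)\cdot\phi,\phi\right\rangle \;=\; -h(W^T(U_\phi),X),
\end{equation*}
which is exactly the asserted identity $W^T(U_\phi) = -\grad\langle\phi,\phi\rangle$.

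\medskip
\noindent\textbf{Step 2: The vanishing case.} If $U_\phi\equiv 0$, then by Step~1, $\grad\langle\phi,\phi\rangle\equiv 0$, so by connectedness of $N$ the norm $\|\phi\|$ is constant.

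\medskip
\noindent\textbf{Step 3: The nonvanishing case.} Assume $U_\phi\not\equiv 0$. Then $\phi\not\equiv 0$, so Lemma~\ref{lemma.fast.killing.verschw} applied to the generalized imaginary Killing equation \eqref{eq.imag.killing.mod} forces $\phi$ to vanish nowhere. Now I would extract two scalar identities: pairing \eqref{eq.Uphi.mod} with $\phi$ in the Hermitian inner product gives
\begin{equation*}
i\|U_\phi\|^2 \;=\; \langle U_\phi\cdot\phi,\phi\rangle \;=\; \langle iu\,\phi,\phi\rangle \;=\; iu\,\|\phi\|^2,
\end{equation*}
so $\|U_\phi\|^2 = u\|\phi\|^2$. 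Lemma~\ref{FirstLemma} (which applies since $\phi$ is nowhere zero) gives $\|U_\phi\|^2 = u^2$. Combining, $u(u-\|\phi\|^2)=0$ pointwise.

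\medskip
\noindent\textbf{Step 4: Connectedness conclusion.} The closed sets $A:=\{u=0\}$ and $B:=\{u=\|\phi\|^2\}$ cover $N$, and since $\|\phi\|^2>0$ everywhere they are disjoint; by connectedness one is empty. If $A=N$, then $U_\phi\cdot\phi = 0$ by \eqref{eq.Uphi.mod}, hence $0=U_\phi\cdot U_\phi\cdot\phi = -\|U_\phi\|^2\phi$ would force $U_\phi\equiv 0$, contradicting our hypothesis. Therefore $B=N$, which gives $u=\|\phi\|^2>0$ everywhere, and then $\|U_\phi\|=|u|=u=\|\phi\|^2$, with $U_\phi$ nowhere vanishing.

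\medskip
I do not anticipate a serious obstacle here; the only points requiring care are the sign bookkeeping in Step~1 (skew-adjointness of Clifford multiplication versus the $i$ in \eqref{eq.imag.killing.mod}) and the connectedness argument in Step~4, which crucially uses that $\|\phi\|$ is strictly positive, itself a consequence of Lemma~\ref{lemma.fast.killing.verschw}.
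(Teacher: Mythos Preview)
Your proof is correct and follows essentially the same approach as the paper: the gradient formula via \eqref{eq.imag.killing.mod} and \eqref{def.dirac.current.mod}, the appeal to Lemma~\ref{lemma.fast.killing.verschw} for nonvanishing of $\phi$, the pointwise alternative $u(u-\|\phi\|^2)=0$, and the connectedness argument to rule out $u\equiv 0$. The only cosmetic differences are that the paper derives $u^2\langle\phi,\phi\rangle=u\langle\phi,\phi\rangle^2$ in a single chain rather than combining Lemma~\ref{FirstLemma} with your pairing identity, and it excludes $u\equiv 0$ via the relation $\|U_\phi\|=|u|$ rather than your Clifford-squaring argument.
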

Here $W^T$ denotes the endomorphism in $\End(TN)$ adjoint to $W$.
% nach Einreichung wurde  \cite[Lemma 4.3]{ammann.gloeckle:21p} geschrieben, das hier auch relevant wäre

Let us compare this propositition to a similar statement by H.~Baum and Th.~Leistner. In the case that $W$ is symmetric, it yields a criterion implying~\eqref{eq.Uphi.mod}. %\eqref{rep.Uphi.f}.

\begin{lemma}[{\cite[Lemma~5 in Sec.~5.2]{baum.leistner.lecture.notes:HH}}]\label{lemma.bl.lectures}
Let $(N,h)$ be a connected Riemannian manifold with a non-zero spinor field $\phi\in \Gamma(\Sigma N)$ and a field of symmetric endomorphisms $W\in \Gamma(\End(TN))$ satisfying \eqref{eq.imag.killing.mod}, % in Proposition~\ref{prop.dich}.
and let $U=U_\phi$ be defined by \eqref{def.dirac.current.mod}.
Then we have $\nabla U_\phi=-\|\phi\|^2 W$. Furthermore $q_\phi:=\|\phi\|^4-h(U_\phi,U_\phi)$ is non-negative and constant. Moreover, if we define $\psi$ as in the proof of Lemma~\ref{dirac.curr.prop.equiv}, then $q_\phi=\|\phi\|^2 \cdot \|\psi\|^2$. If $q_\phi=0$, then \eqref{eq.Uphi.mod} %\eqref{rep.Uphi.f} in Proposition~\ref{prop.dich} 
holds for $u:=\|U_\phi\|=\|\phi\|^2$. 
\end{lemma}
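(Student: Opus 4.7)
The plan is to first establish the key formula $\nabla U_\phi = -\|\phi\|^2 W$, and then read off the remaining statements as easy consequences together with the computation already performed in the proof of Lemma~\ref{dirac.curr.prop.equiv}.

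First I would compute $\nabla U_\phi$ directly from \eqref{def.dirac.current.mod}. For $X,Y\in TN$ one has $Y\, h(U_\phi,X)=h(\nabla_Y U_\phi,X)+h(U_\phi,\nabla_Y X)$, while differentiating the right hand side of \eqref{def.dirac.current.mod} and cancelling the common term $-i\langle\nabla_Y X\cdot\phi,\phi\rangle=h(U_\phi,\nabla_Y X)$ leaves
\[
h(\nabla_Y U_\phi,X)=-i\langle X\cdot\nabla_Y\phi,\phi\rangle-i\langle X\cdot\phi,\nabla_Y\phi\rangle.
\]
Now I would plug in $\nabla_Y\phi=\tfrac{i}{2}W(Y)\cdot\phi$ from \eqref{eq.imag.killing.mod}. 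The two resulting scalars are conjugate to each other up to a sign coming from the anti-linearity of $\langle\,\cdot\,,\,\cdot\,\rangle$ in the second slot, so their sum collapses, via the Clifford identity $X\cdot W(Y)+W(Y)\cdot X=-2h(X,W(Y))$, to $-h(X,W(Y))\|\phi\|^2$. Using symmetry of $W$ this yields $\nabla_Y U_\phi=-\|\phi\|^2 W(Y)$, which is claim (1).

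Claim (2) then follows by differentiating the two summands of $q_\phi$: on the one hand $Y\|\phi\|^4=2\|\phi\|^2\cdot 2\Re\langle\nabla_Y\phi,\phi\rangle=-2\|\phi\|^2 h(U_\phi,W(Y))$, where I have used that $\Re\langle\tfrac{i}{2}W(Y)\cdot\phi,\phi\rangle=-\tfrac12\, h(U_\phi,W(Y))$ by definition of $U_\phi$; on the other hand $Y\,h(U_\phi,U_\phi)=2h(\nabla_Y U_\phi,U_\phi)=-2\|\phi\|^2 h(W(Y),U_\phi)$. Symmetry of $W$ makes these two expressions coincide, so $Y q_\phi=0$ for every $Y$, and $q_\phi$ is constant by connectedness of $N$. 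Claim (3) is then read off directly from the penultimate display in the proof of Lemma~\ref{dirac.curr.prop.equiv}: that proof gives $h(U_\phi,U_\phi)=\|\phi\|^2(\|\phi\|^2-\|\psi\|^2)$, i.e.\ $q_\phi=\|\phi\|^2\|\psi\|^2\ge 0$.

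For claim (4) the main input is Lemma~\ref{lemma.fast.killing.verschw} applied to the imaginary Killing equation \eqref{eq.imag.killing.mod}: since $\phi$ is assumed to be non-zero, it vanishes nowhere. Thus $q_\phi=0$ forces $\psi\equiv 0$, which by definition of $\psi$ means $i\phi=V\cdot\phi$ at each point for some $V\in TN$. Applying $V$ again and using $V\cdot V=-h(V,V)$ and $V\cdot i\phi=i V\cdot\phi=-\phi$ yields $h(V,V)=1$. To identify $V$ with $\|\phi\|^{-2}U_\phi$ I would test against an orthonormal frame: the relation $V\cdot X+X\cdot V=-2h(V,X)$ combined with $V\cdot\phi=i\phi$ gives, for $X\perp V$, the two expressions $\langle V\cdot X\cdot\phi,\phi\rangle=\pm i\langle X\cdot\phi,\phi\rangle$, forcing $\langle X\cdot\phi,\phi\rangle=0$; and for $X=V$ one computes $h(U_\phi,V)=-i\langle V\cdot\phi,\phi\rangle=\|\phi\|^2$. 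Hence $U_\phi=\|\phi\|^2 V$, $\|U_\phi\|=\|\phi\|^2$, and $U_\phi\cdot\phi=\|\phi\|^2\, V\cdot\phi=i\|\phi\|^2\phi$, which is \eqref{eq.Uphi.mod} for $u=\|\phi\|^2$.

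The main obstacle is the bookkeeping in claim (1): tracking the interplay between the anti-linearity of $\langle\,\cdot\,,\,\cdot\,\rangle$, the skew-Hermiticity of Clifford multiplication by a vector, and the symmetry of $W$, so that the factors of $i$ and the two Clifford products assemble into exactly the anticommutator $X\cdot W(Y)+W(Y)\cdot X$. Once this step is done the rest is essentially a short differentiation together with a pointwise Clifford algebra computation.
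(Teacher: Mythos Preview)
The paper does not supply its own proof of this lemma: it is quoted verbatim from \cite[Lemma~5 in Sec.~5.2]{baum.leistner.lecture.notes:HH} and no proof environment follows the statement. So there is nothing in the paper to compare your argument against.

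That said, your proof sketch is correct and self-contained. Two small remarks. First, the appeals to ``symmetry of $W$'' in your derivation of $\nabla_Y U_\phi=-\|\phi\|^2 W(Y)$ and of $Yq_\phi=0$ are not actually needed: in both places you are only using symmetry of the metric $h$, namely $h(X,W(Y))=h(W(Y),X)$ and $h(U_\phi,W(Y))=h(W(Y),U_\phi)$. The endomorphism symmetry of $W$ plays no role in these two steps (compare with Proposition~\ref{prop.dich}, which obtains $\grad\|\phi\|^2=-W^T(U_\phi)$ without assuming $W$ symmetric). Second, Lemma~\ref{lemma.fast.killing.verschw} is stated for closed $N$, while the present lemma only assumes $N$ connected; however the proof of Lemma~\ref{lemma.fast.killing.verschw} is a path estimate that works verbatim on any connected manifold (one bounds $\|A\|_\infty$ along a compact path), and the paper itself invokes it in that generality in the proof of Proposition~\ref{prop.dich}. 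So this is a cosmetic mismatch, not a gap.
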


Note that in the case of an imaginary Killing spinor, i.e.\ $W=\mu\id$, then $q_\phi$ is related to the constant $Q_\phi$ defined in \cite{baum.friedrich.grunewald.kath:91} for any twistor spinor by $Q_\phi=n^2 \mu^2q_\phi$. Note that imaginary Killing spinors are both twistor spinors and generalized imaginary Killing spinors, but there are generalized imaginary Killing spinors, which are not twistor spinors and vice versa. According to \cite[Chap.~7]{baum.friedrich.grunewald.kath:91} an imaginary Killing spinor is of type~I, if and only if $Q_\phi=0$. Otherwise it is of type~II. Any complete Riemannian manifold carrying a type~II imaginary Killing spinor (with $\mu\neq 0$) is homothetic to the hyperbolic space \cite[Sec.~7.2]{baum.friedrich.grunewald.kath:91}. If it is of type~I, then it arise from a warped product construction as in our Section~\ref{sec.constr.sol}, see \cite[Sec.~7.3]{baum.friedrich.grunewald.kath:91}.

\begin{proof}[Proof of Proposition~\ref{prop.dich}] First we compute 
\begin{eqnarray*}
X\< \phi, \phi\> = \<\nabla_X \phi, \phi\> + \<\phi, \nabla_X \phi\> &=& \frac{i}{2} \left(\left\<W(X) \cdot \phi, \phi\right\> - \left\<\phi, W(X) \cdot \phi\right\>\right)\\
&=& i \<W(X) \cdot \phi, \phi\> = -h( U_\phi,W(X))\\ 
&=& -h(W^T (U_\phi),X).  
\end{eqnarray*}

With $X\< \phi, \phi\> =h(\grad\< \phi, \phi\>,X)$ we obtain  $\grad \<\phi, \phi\> = -W^T (U_\phi)$. Obviously this implies that $\|\phi\|$ is constant in the case $U_\phi\equiv 0$.

We now consider the case  $U_\phi \not\equiv 0$. Note that Equation~\eqref{eq.imag.killing.mod} is equivalent to saying that $\phi$ is a parallel section for the connection $\ol\na_X\phi:=\na_X^N\phi- \frac{i}2 W(X)\cdot \phi$. This implies:
If we have some $p\in M$ with $\phi(p)=0$, then $\phi$ has to vanish on all of $N$, and thus $U_\phi\equiv 0$, the case already solved. So let us assume $\phi(p)\neq 0$ for all $p\in M$.
Note that  \eqref{eq.Uphi.mod} implies 
  $$\|U_\phi\|= \frac{\|U_\phi\cdot\phi\|}{\|\phi\|}=\frac{\|iu\phi\|}{\|\phi\|}=|u|.$$ 

We calculate

\begin{eqnarray*}
u^2 \<\phi, \phi\> = \<iu\, \phi, iu\, \phi\> &=& \<U_\phi \cdot \phi, U_\phi \cdot \phi\>\\
 &=& h(U_\phi,U_\phi) \<\phi, \phi\> \\
 &=& -i \<U_\phi \cdot \phi, \phi\> \<\phi, \phi\>\\
 &=& - i \<iu\, \phi, \phi\> \<\phi, \phi\>\\
 &=& u \<\phi, \phi\>^2 , 
\end{eqnarray*}

i.e.\ at each point $p \in N$ we have $u(p) = 0$ or $u(p)= \<\phi(p), \phi(p)\>=\|\phi(p)\|^2$.
The sets $\{p\in N\mid u(p)=0\}$ and $\{p\in N\mid u(p)=\|\phi(p)\|^2\}$ are closed and disjoint, thus the connectedness of $N$ implies that either $u\equiv 0$ or  $u\equiv \<\phi, \phi\>$. In the case $u\equiv 0$ we obtain 
$U_\phi\equiv 0$, and we are again back in the case already solved.
So we conclude $u\equiv \<\phi, \phi\>=\|\phi\|^2>0$ and thus $u=\|U_\phi\|$.
So everything is proven.
\end{proof}

Now we discuss our main case of interest, i.e.\ that $\phi$ is a
generalized imaginary Killing spinor which is by definition a solution 
of $\nabla_X\phi=\frac{i}2 W(X)\cdot \phi\;\forall X\in TN$ 
with $W\in \End(TN)$ symmetric. We assume $N$ to be connected and $\phi\not\equiv 0$ which implies as we have seen that $\phi$ vanishes nowhere. According to Lemma~\ref{lemma.bl.lectures} we obtain the equation $\nabla U_\phi=-\|\phi\|^2 W$ and the fact that $q_\phi:=\|\phi\|^4-\|U_\phi\|^2$ is a non-negative constant. In the case $q_\phi=0$ (denoted by ``type~I'' in \cite[Sec.~5.2]{baum.leistner.lecture.notes:HH}) we know further that \eqref{eq.Uphi.mod} holds for $u=\|U_\phi\|$.

\begin{corollary}Let $(N,h)$ be a connected Riemannian spin manifold. 
Let $\phi$ be a  generalized imaginary Killing spinor, i.e.\ a solution of \eqref{eq.imag.killing.mod}
%  $$\nabla_X \phi = \frac{i}{2} W(X) \cdot \phi \text{ for all }X \in TN$$
for a field of symmetric endomorphisms $W$. Let again $U=U_\phi$ be defined by \eqref{def.dirac.current.mod}. 
We assume $W\not\equiv 0$, thus $\nabla U_\phi\not\equiv 0$ and hence $U_\phi\not\equiv 0$.
Then the following are equivalent:
\begin{enumerate}[{\rm (a)}]
\item\label{point1} $\phi$ is of type~I,
\item\label{point2} $\phi$ satisfies $\|U_\phi\|=\|\phi\|^2$,
\item\label{point3} $\phi$ satisfies equation \eqref{char.quad.cond} (or equivalently \eqref{char.quad.cond.mod}),
\item\label{point4} Condition  \eqref{eq.Uphi.mod} holds for $u:=\|U_\phi\|$.
\end{enumerate}
\end{corollary}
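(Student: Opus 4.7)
The plan is to prove the four-way equivalence by establishing a short chain of implications, each of which follows almost directly from a previously proved result: Lemma~\ref{dirac.curr.prop.equiv}, Lemma~\ref{lemma.bl.lectures} and the Dichotomy Proposition~\ref{prop.dich}. Before cycling through the equivalences, I would first record the standing hypotheses: since $\phi$ is a non-trivial generalized imaginary Killing spinor, Lemma~\ref{lemma.fast.killing.verschw} applied to $A(X\otimes\phi):=\tfrac{i}{2}W(X)\cdot\phi$ forces $\phi$ to vanish nowhere, so in particular $\|\phi\|^2>0$ everywhere, and the assumption $W\not\equiv 0$ together with $\nabla U_\phi=-\|\phi\|^2 W$ (from Lemma~\ref{lemma.bl.lectures}) forces $U_\phi\not\equiv 0$, as already asserted in the corollary.

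The equivalence \eqref{point1}$\Leftrightarrow$\eqref{point2} is purely algebraic: $q_\phi=\|\phi\|^4-\|U_\phi\|^2$, and since $\|\phi\|^2\geq 0$ and $\|U_\phi\|\geq 0$, the vanishing of $q_\phi$ is literally the same as the equality $\|U_\phi\|=\|\phi\|^2$. For \eqref{point2}$\Leftrightarrow$\eqref{point3}, I would simply invoke Lemma~\ref{dirac.curr.prop.equiv}, which is stated exactly as the equivalence of $h(U_\phi,U_\phi)=\|\phi\|^4$ with \eqref{char.quad.cond.mod}; this does not even use the Killing equation.

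The only content is then the equivalence of \eqref{point4} with the others. For \eqref{point1}$\Rightarrow$\eqref{point4} I would cite the last sentence of Lemma~\ref{lemma.bl.lectures}: under $q_\phi=0$ one obtains directly $U_\phi\cdot\phi=iu\phi$ with $u=\|U_\phi\|=\|\phi\|^2$. For the converse \eqref{point4}$\Rightarrow$\eqref{point2}, I would apply Proposition~\ref{prop.dich}: the hypothesis \eqref{point4} is a special case of \eqref{eq.Uphi.mod}, and since by our preparatory remarks $U_\phi\not\equiv 0$, the dichotomy forces us into the second alternative of the proposition, yielding $u=\|U_\phi\|=\|\phi\|^2$, which is \eqref{point2}.

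No step is really an obstacle — everything reduces to bookkeeping on top of the Dichotomy Proposition and Lemma~\ref{lemma.bl.lectures}. The one point that deserves a sentence of care is the non-vanishing of $\phi$ and of $U_\phi$, because without these we could not invoke Proposition~\ref{prop.dich} to conclude the precise identification $u=\|U_\phi\|=\|\phi\|^2$ in the direction \eqref{point4}$\Rightarrow$\eqref{point2}; but both are guaranteed by the standing assumption $W\not\equiv 0$ together with Lemma~\ref{lemma.fast.killing.verschw} and the formula $\nabla U_\phi=-\|\phi\|^2 W$.
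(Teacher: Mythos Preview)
Your proposal is correct and follows essentially the same route as the paper: the paper's proof is a four-line invocation of the definition of type~I, Lemma~\ref{dirac.curr.prop.equiv}, Lemma~\ref{lemma.bl.lectures}, and Proposition~\ref{prop.dich}, in exactly the pattern you describe. Your additional paragraph justifying the non-vanishing of $\phi$ and $U_\phi$ is a welcome elaboration of what the corollary's statement asserts without proof.
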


\begin{proof}\ 

\eqref{point1}$\Longleftrightarrow $ \eqref{point2} holds by definition of ``type~I''.

\eqref{point2}$\Longleftrightarrow $ \eqref{point3} is stated in Lemma~\ref{dirac.curr.prop.equiv}.

\eqref{point2}$\Longrightarrow $ \eqref{point4} is part of the statement of Lemma~\ref{lemma.bl.lectures}.

\eqref{point4}$\Longrightarrow $ \eqref{point2} is part of the statement of Proposition~\ref{prop.dich}.
\end{proof}
We have seen, in particular, that for a generalized imaginary Killing spinor~$\phi$ equation
\eqref{eq.Uphi.mod} implies  \eqref{char.quad.cond.mod} for $U=U_\phi$ defined by \eqref{def.dirac.current.mod}, unless the vector field $U_\phi$ and the endomorphism $W$ vanish everywhere.

%%%%%%%%%%%%%%%%%%%%%%%%%%%%%%%%%%%%%%%%%%%%%%%%%%%%%%%%%%%%%%%% 
%%%%%%%%%%%%%%%%%%%%%%%%%%%%%%%%%%%%%%%%%%%%%%%%%%%%%%%%%%%%%%%%
\section{More on Hypersurfaces}\label{sec.hypersurfaces}
%%%%%%%%%%%%%%%%%%%%%%%%%%%%%%%%%%%%%%%%%%%%%%%%%%%%%%%%%%%%%%%%
%%%%%%%%%%%%%%%%%%%%%%%%%%%%%%%%%%%%%%%%%%%%%%%%%%%%%%%%%%%%%%%%

In this appendix we prove Lemma~\ref{lem.hyperflaechenformel}, i.e.\ formula~\eqref{bernd.habil.formel}. 

It is known since long that one cannot restrict spinors to a hypersurface in a way preserving the connection. The difference of the connections depends on the second fundamental form or equivalently the Weingarten map. 
This effect is in some applications very helpful, e.g. in the case of surfaces in Euclidean space, where it leads to the spinorial version of the Weierstrass representation, see \cite{friedrich:98} for a good presentation 
or see \cite{kusner.schmitt:p96} for an earlier, up to branching point aspects complete, but less conceptual publication, based on \cite{kusner.schmitt:p95}. 
How to restrict spinors to hypersurfaces and the effect on the connection was already discussed in mathematical physics in the Riemannian \cite{trautman:92,trautman:95} and Lorentzian \cite{witten:81} context, and in spectral theory \cite{baer:98}.

As different convention are used in the literature and as we follow, similar to  \cite[Prop.~5.3.1]{ammann:habil}, another convention than the well-written exposition \cite{baer.gauduchon.moroianu:05} 
we want to give a detailed proof of Lemma~\ref{lem.hyperflaechenformel} in this appendix.

As in Subsection~\ref{subsec.hyp.surf} and Appendix~\ref{sec.hypersurfaces} we assume that $(N,h)$ is an $n$-dimensional Riemannian spin manifold, $N=M\times (a,b)$, $h=g_s+ds^2$ for a family of metrics $(g_s)$, $s\in (a,b)$ on $M$. Let $\nabla^{M,g_s}$ be the Levi-Civity connection of $(M,g_s)$ and $\nabla^N$ the one of $(N,h)$. We write $\nu:=\partial/\partial s$ for the unit normal vector field of $M\times\{s\}$ in $N$.
For $X\in TM\cong T(M\times\{s\})\subset TN $,  and $Y \in\Gamma(TM)$ we have 
  $$\nabla^N_XY=\nabla^{M,g_s}_XY+\II(X,Y),\qquad  g_s(W(X),Y)\nu=\II(X,Y),$$
and  we have, see e.g.\ \cite[(4.1)]{baer.gauduchon.moroianu:05} $g_s(W(X),Y)= -\frac12 (d/ds)\,g_s$. Note that the second fundamental form $\II$ has values in the normal bundle. Again, we use $\star$ for the Clifford multipication on $N$.

For concrete calculations we choose an open subset $U\subset M\times (a,b)=N$ and for every $(x,s)\in U$ we choose a positively oriented $g_s$-orthonormal basis  $q_M=(e_1,\ldots,e_{n-1})$ of $T_xM$, smoothly depending on $x$ and~$s$. 
Thus $q_M$ is a local section of $P_{\SO, M}(N)$. In other words: 
$q_M$ is a frame of the vertical bundle of $N\to (a,b)$. Furthermore  
$q_N:=(e_1,\ldots,e_n)$, $e_n:=\nu$ is a frame for $(N,h)$, i.e.\ a local section
of $P_{\SO}(N)$. 
We define the associated Christoffel symbols by
  $$\nabla^{M,g_s}_{e_i}e_j=\sum_{k=1}^{n-1} \Gamma^{M\,k}_{ij} e_k,\qquad \nabla^{N}_{e_i}e_j=\sum_{k=1}^{n} \Gamma^{N\,k}_{ij} e_k.$$

Now let $\tilde q_M$ resp.\ $\tilde q_N$ be a spinorial lift of $q_M$ resp.\ $q_N$, i.e.\ a local section of $P_{\Spin,M}(N)$ resp.\ $P_{\Spin}(N)$, such that postcomposing $\tilde q_M$ resp.\ $\tilde q_N$ with $P_{\Spin,M}(N)\to P_{\SO,M}(N)$ resp.\ $P_{\Spin}(N)\to P_{\SO}(N)$ yields $q_M$ resp.\ $q_N$.
On $U$ we can write a spinor $\Phi$, i.e.\ a section of $\Phi\in \Gamma(\Sigma^{(\#)} N)$ as $\Phi=[\tilde q_N,\sigma]$, with $\sigma:U\to \Sigma^{(\#)}_n$.
We also may view as $\Sigma^{(\#)} N$ as an associated bundle to $P_{\SO, M}(N)$, and with proper identifications we get $[\tilde q_N,\sigma]=[\tilde q_M,\sigma]$.
The connection $\nabla^N$ defines the standard Levi-Civita connection on $\Sigma N$, again denoted by  $\nabla^N$. On the other hand, $\nabla^{M,g_s}$ defines a connection on $\Sigma N|_{M\times \{s\}}$.

\begin{proposition}
For any $i=1,\ldots,n-1$ and  $\Phi\in \Gamma(\Sigma^{(\#)} N)$ we have 
  $$\nabla_{e_i}^N \Phi = \nabla_{e_i}^{M,g_s} \Phi + {1\over 2}
    \sum_{j=1}^{n-1} e_j\star\big(\II(e_i,e_j)\big)\star \Phi.$$
\end{proposition}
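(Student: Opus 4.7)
\emph{Proof plan.} The strategy is to reduce the statement to an algebraic identity comparing the spin connections induced by the two metric connections $\nabla^N$ and $\nabla^{M,g_s}$, when expressed in a common local frame and a common spinorial lift.

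\textbf{Setup.} On an open set $U \subset N$ I would choose a local positively oriented $g_s$-orthonormal frame $q_M = (e_1,\ldots,e_{n-1})$ of $T_M N$, extend it to $q_N = (e_1,\ldots,e_{n-1},\nu)$, and pick a spinorial lift $\tilde q_M$ of $q_M$. By construction of $P_{\Spin}(N) = P_{\Spin,M}(N) \times_{\Spin(n-1)} \Spin(n)$, this canonically determines a compatible lift $\tilde q_N$ of $q_N$. Writing $\Phi = [\tilde q_N, \sigma]$ with $\sigma : U \to \Sigma_n^{(\#)}$, the same local function $\sigma$ represents $\Phi$ in the frame $\tilde q_M$ under the identification used to define $\nabla^M$ on $\Sigma^{(\#)} N$.

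\textbf{Local formulas.} The standard formula for the spin connection induced by a metric connection reads
$$\nabla_{e_i} \Phi = \bigl[\tilde q,\; e_i(\sigma) + \tfrac{1}{4} \sum_{j,k} \Gamma^{k}_{ij}\, E_j \cdot E_k \cdot \sigma\bigr],$$
where $(E_1,\ldots,E_n)$ is the standard basis of $\R^n$ and the $\Gamma^k_{ij}$ are the Christoffel symbols in the orthonormal frame. Apply this to $\nabla^N$ (summing $1 \le j,k \le n$) and to $\nabla^M$ (summing $1 \le j,k \le n-1$). By the Gauss formula, $\Gamma^{N,k}_{ij} = \Gamma^{M,k}_{ij}$ whenever $i,j,k \le n-1$, so $\nabla^N_{e_i}\Phi - \nabla^M_{e_i}\Phi$ only picks up the terms with $j = n$ or $k = n$.

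\textbf{Computing the difference.} The remaining Christoffel symbols are $\Gamma^{N,n}_{ij} = g_s(W(e_i), e_j)$ for $j \le n-1$ and, from the defining equation $\nabla^N_{e_i}\nu = -W(e_i)$ of the Weingarten map, $\Gamma^{N,k}_{in} = -g_s(W(e_i), e_k)$ for $k \le n-1$. Applying $E_j \cdot E_n = -E_n \cdot E_j$ to merge the two resulting sums doubles the prefactor, yielding
$$\nabla^N_{e_i} \Phi - \nabla^M_{e_i} \Phi = \tfrac{1}{2} \sum_{j=1}^{n-1} g_s(W(e_i), e_j)\, e_j \star \nu \star \Phi.$$
Since $\II(e_i, e_j) = g_s(W(e_i), e_j)\,\nu$, the right-hand side is precisely $\tfrac{1}{2}\sum_{j} e_j \star \II(e_i,e_j) \star \Phi$.

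\textbf{Main subtlety.} The only point requiring real care is the coherence of the two trivializations: one must verify that $\tilde q_M$ maps to the chosen $\tilde q_N$ under the canonical embedding $P_{\Spin,M}(N) \hookrightarrow P_{\Spin}(N)$, so that representing $\Phi$ by the same $\sigma$ in both frames is legitimate. This is a consequence of the associated-bundle construction $P_{\Spin}(N) = P_{\Spin,M}(N) \times_{\Spin(n-1)} \Spin(n)$. After this coherence is in place the computation is purely an application of the Clifford relations, and the sign convention for the Weingarten map controls the overall sign of the correction term.
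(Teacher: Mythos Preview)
Your proposal is correct and follows essentially the same route as the paper: both compute $\nabla^N_{e_i}\Phi$ and $\nabla^M_{e_i}\Phi$ via the local spin-connection formula in a common frame, use the Gauss relation $\Gamma^{N,k}_{ij}=\Gamma^{M,k}_{ij}$ for tangential indices, and then combine the two surviving boundary terms (with $j=n$ and $k=n$) using the Clifford anticommutation to obtain the $\tfrac12$ prefactor. Your explicit remark on the coherence of the lifts $\tilde q_M\hookrightarrow\tilde q_N$ is a welcome clarification of a point the paper leaves implicit.
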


\begin{proof}
For any $i,j,k\in\{1,\ldots,n-1\}$ we have
  $$\Gamma^{N\,k}_{ij}= \Gamma^{M\,k}_{ij},\qquad\Gamma^{N\,n}_{ij}\,e_n=-\Gamma^{N\,j}_{in}\,e_n = \II(e_i,e_j),\qquad \Gamma^{N\,n}_{in}=0.$$

Let $(E_1,\ldots,E_n)$ denote the canonical basis of $\mR^n$.
Writing the connection in local coordinates we obtain
%With formula~\eref{nablaspinformel} we obtain 
for $i=1,2,\ldots,n-1$
\begin{eqnarray*}
  \left(\nabla_{e_i}^N[\ti q_M,\sigma]\right)&=&\bigg[\ti q_M,\pa_{e_i}\sigma + 
           {1\over 4}\sum_{j,k=1}^{n} \Gamma^{N\,k}_{ij} E_j\star E_k\star \sigma\bigg]\\
 &=&\underbrace{\bigg[\ti q_M,\pa_{e_i}\sigma +
           {1\over 4}\sum_{j,k=1}^{n-1} \Gamma^{N\,k}_{ij} E_j\star E_k\star \sigma\bigg]}_{\nabla_{e_i}^{M,g_s} [\ti q_M,\sigma]}
+  \underbrace{\bigg[\ti q_M,{1\over 4}\sum_{j=1}^{n-1} \Gamma^{N\,n}_{ij} E_j\star E_n\star \sigma\bigg]}_{ {1\over 4}
    \sum_{j=1}^{n-1} e_j\cdot \big(\II(e_i,e_j)\big)\cdot [\ti q_M,\sigma]}\\
 &&{}+\underbrace{\bigg[\ti q_M,{1\over 4}\sum_{k=1}^{n-1} \Gamma^{N\,k}_{in} E_n\star E_k\star \sigma\bigg]}_{ {1\over 4}
    \sum_{j=1}^{n-1} e_j\star \big(\II(e_i,e_j)\big)\star [\ti q_M,\sigma]} \\
      & = &\nabla_{e_i}^{M,g_s} [\ti q_M,\sigma] + {1\over 2}
    \sum_{j=1}^{n-1} e_j\star \big(\II(e_i,e_j)\big)\star [\ti q_M,\sigma].\qedhere
\end{eqnarray*}
%\vskip-12mm
\end{proof}
Obviously we have the following calculation in the Clifford algebra
 $$  \sum_{j=1}^{n-1} e_j\star\II(e_i,e_j)=  \sum_{j=1}^{n-1} e_j\star g_s(W(e_i),e_j)\nu=W(e_i)\star \nu,$$
and thus we obtain for $X\in TM$ and   $\Phi\in \Gamma(\Sigma^{(\#)} N)$ the equation
  $$\nabla_X^N \Phi = \nabla_X^{M,g_s} \Phi + {1\over 2}
    W(X)\star \nu \star \Phi.$$
As the maps $\Joker$ in Lemma~\ref{lem.hyperflaechenformel} are constructed from an algebraic map using the associated bundle construction, they are $\nabla^{M,g_s}$-parallel, i.e.\ 
  $$\nabla_X^{M,g_s}\Joker \phi= \Joker (\nabla_X^{M,g_s} \phi )$$
for all $X\in TM$. Now  Lemma~\ref{lem.hyperflaechenformel} follows immediately
by setting $\Phi:=\Joker(\phi)$.

%%%%%%%%%%%%%%%%%%%%%%%%%%%%%%%%%%%%%%%%%%%%%%%%%%%%%%%%%%%%%%%%
%%%%%%%%%%%%%%%%%%%%%%%%%%%%%%%%%%%%%%%%%%%%%%%%%%%%%%%%%%%%%%%%
\section{Change of orientation}\label{sec.change.of.orientation}
%%%%%%%%%%%%%%%%%%%%%%%%%%%%%%%%%%%%%%%%%%%%%%%%%%%%%%%%%%%%%%%%
%%%%%%%%%%%%%%%%%%%%%%%%%%%%%%%%%%%%%%%%%%%%%%%%%%%%%%%%%%%%%%%%

In this appendix we will prove the following proposition.

\begin{proposition}\label{prop.change.or}
Let $(M,g)$ be a Riemannian manifold with an orientation~$\Or$, $m=\dim M$.
Let $P_{\Spin}(M,g,\Or)\to  P_{\SO}(M,g,\Or)$ be a spin structure on $(M,g,\Or)$ with associated 
spinor bundle  $\Sigma (M,g,\Or):=P_{\Spin}(M,g,\Or)\times_{\sigma_m}\Sigma_m$ resp. $\Sigma^\# (M,g,\Or):=P_{\Spin}(M,g,\Or)\times_{\sigma_m}\Sigma_m^\#$.\\ 
Then there is a spin structure $P_{\Spin}(M,g,-\Or)\to  P_{\SO}(M,g,-\Or)$  on $(M,g,-\Or)$ with the following properties:\\
Case $m$ even: let  $\Sigma (M,g,-\Or):=P_{\Spin}(M,g,-\Or)\times_{\sigma_m}\Sigma_m$ be the associated spinor bundle, then there is a parallel isometric, complex linear bundle isomorphism $\Psi:\Sigma (M,g,-\Or) \to \Sigma (M,g,\Or)$, commuting with Clifford multiplication. This map $\Psi$ maps $\left(\Sigma (M,g,-\Or)\right)^+$ to $\left(\Sigma (M,g,\Or)\right)^-$ and $\left(\Sigma (M,g,-\Or)\right)^-$ to $\left(\Sigma (M,g,\Or)\right)^+$.\\
Case $m$ odd: let  $\Sigma^{(\#)} (M,g,-\Or):=P_{\Spin}(M,g,-\Or)\times_{\tilde\sigma_m}{\Sigma}_m^{(\#)}$ be one of the associated spinor bundles. Then there is a parallel isometric, complex linear bundle isomorphism $\Psi:\Sigma (M,g,-\Or)\to \Sigma^\# (M,g,\Or)$, commuting with Clifford multiplication. The same statement holds if we exchange the role of $\Sigma_m$ and $\Sigma_m^{\#}$ in the definitions of $\Sigma (M,g,\pm \Or)$.
\end{proposition}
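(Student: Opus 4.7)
The plan is to use a lift $\tilde\epsilon \in \Pin(m)$ of the reflection $\epsilon = \diag(-1, 1, \ldots, 1) \in \O(m) \setminus \SO(m)$---for concreteness $\tilde\epsilon := E_1$---and to exploit how it interacts with the Clifford structure. First I would construct $P_{\Spin}(M,g,-\Or)$ as the preimage of $P_{\SO}(M,g,-\Or)$ inside the $\Pin(m)$-principal bundle $P_{\Pin}(M,g) := P_{\Spin}(M,g,\Or)\times_{\Spin(m)}\Pin(m)$, which naturally double-covers the full orthonormal frame bundle $P_{\SO}(M,g,\Or) \sqcup P_{\SO}(M,g,-\Or)$. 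Concretely, $P_{\Spin}(M,g,-\Or)$ may be realized as a copy of $P_{\Spin}(M,g,\Or)$ whose $\Spin(m)$-action is twisted by conjugation by $\tilde\epsilon$ and whose projection sends $\tilde q$ to $\pi(\tilde q) \cdot \epsilon$; a frame $q = (e_1,\ldots,e_m) \in P_{\SO}(M,g,\Or)$ then corresponds to $q' = (-e_1, e_2, \ldots, e_m) \in P_{\SO}(M,g,-\Or)$.

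Next I would construct $\Psi$ fiberwise using representation theory. Clifford multiplication by $X = \sum_i \xi^i e_i$ in the $-\Or$-picture uses the element $\xi'' := -\xi^1 E_1 + \sum_{i\geq 2} \xi^i E_i \in \CCl_m$, while in the $\Or$-picture it uses $\xi := \sum_i \xi^i E_i$. For $m$ \emph{even} I would set $\Psi([\tilde q, v]_{-\Or}) := [\tilde q, a\, v]_\Or$ with $a := \omega^\C_m E_1 \in \CCl_m$; a short computation using $E_i \omega^\C_m = -\omega^\C_m E_i$ (which holds precisely because $m$ is even) gives $a \xi'' = \xi\, a$ for every $\xi \in \R^m$, so $\Psi$ commutes with Clifford multiplication. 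Since $a$ anticommutes with $\omega^\C_m$, the map $\Psi$ swaps the $\pm 1$-eigenspaces of $\omega^\C_m$, yielding the claimed chirality swap.

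For $m$ \emph{odd} there is no element of $\CCl_m$ doing this job, because $\omega^\C_m$ is then central; but now $\Sigma_m$ and $\Sigma_m^\#$ are genuinely distinct irreducible $\CCl_m$-modules and the target is $\Sigma^\#(M,g,\Or)$. They remain isomorphic as $\Spin(m)$-modules, and any $\Spin(m)$-equivariant isomorphism $J : \Sigma_m \to \Sigma_m^\#$ is forced to satisfy $J(X\cdot v) = -X \cdot J(v)$ for $X \in \R^m$, because $J$ must anticommute with $\omega^\C_m$ (which acts by $\pm 1$ with opposite signs on source and target). I would then set $\Psi([\tilde q, v]_{-\Or}) := [\tilde q, J(E_1 v)]_\Or^\#$. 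The identity $E_1 \xi'' = -\xi E_1$, combined with the extra sign coming from $J$, shows that $\Psi$ again commutes with Clifford multiplication.

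The remaining verifications should be routine. The map $\Psi$ is complex-linear by construction, and fiberwise an isometry because the Hermitian form on $\Sigma_m$ is $\Spin(m)$-invariant and Clifford multiplication by unit vectors (and hence by $E_1$ and by $\omega^\C_m$) is unitary, while $J$ is $\Spin(m)$-isometric when normalized appropriately. To see that $\Psi$ is parallel, observe that the Levi-Civita connection on either spinor bundle is induced from \emph{one and the same} $\SO(m)$-connection on the underlying orthonormal frame bundle of $(M,g)$, and $\Psi$ is built from point-independent algebraic data on fibers, so it intertwines the induced covariant derivatives. The main obstacle is the dimension-parity dichotomy itself: for even $m$ the intertwiner lives inside $\CCl_m$---essentially $E_1$ twisted by $\omega^\C_m$ to correct a sign---whereas for odd $m$ one has to leave $\CCl_m$ and use the purely representation-theoretic map $J$, which is exactly what forces the target to be $\Sigma^\#(M,g,\Or)$ rather than $\Sigma(M,g,\Or)$.
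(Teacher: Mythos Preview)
Your construction is essentially the same as the paper's: both build $P_{\Spin}(M,g,-\Or)$ as a copy of $P_{\Spin}(M,g,\Or)$ with the $\Spin(m)$-action twisted by conjugation with $E_1$, and both obtain $\Psi$ as the composition of the map $[\tilde q,\phi]\mapsto[\tilde\rho(\tilde q),\sigma(E_1)\phi]$ (which the paper calls $\rho_\#$ and shows \emph{anti}commutes with Clifford multiplication) with an algebraic map $K$ that also anticommutes with Clifford multiplication by vectors, so that the composite commutes. The only cosmetic difference is that in the even case you make $K$ explicit as Clifford multiplication by $\omega^\C_m$, whereas the paper leaves $K$ abstract and argues its existence from the classification of irreducible $\CCl_m$-modules via the twisted representation $\hat\sigma(X):=-\sigma(X)$.

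One small point to tighten: in the odd case your justification that a $\Spin(m)$-equivariant $J:\Sigma_m\to\Sigma_m^\#$ must satisfy $J(X\cdot v)=-X\cdot J(v)$ (``because $J$ must anticommute with $\omega^\C_m$'') is not yet a proof---since $\omega^\C_m$ acts as $+1$ on $\Sigma_m$ and $-1$ on $\Sigma_m^\#$, the relation $J\circ\omega^\C_m=-\omega^\C_m\circ J$ is automatic and does not by itself force anticommutation with individual vectors. The quickest fix is exactly the paper's: observe that $(\Sigma_m,\hat\sigma)$ is an irreducible $\CCl_m$-module on which $\omega^\C_m$ acts as $(-1)^m=-1$, hence is isomorphic to $(\Sigma_m^\#,\sigma)$, and the intertwiner then satisfies the desired anticommutation; by Schur any $\Spin(m)$-equivariant $J$ is a scalar multiple of it.
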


We define a map  $\rho:P_{\SO}(M,g,\Or-)\to  P_{\SO}(M,g,\Or)$ by
$\rho\big((e_1,e_2,\ldots,e_m)\big)\definedas (-e_1,e_2,\ldots,e_m)$ for any $(-\Or)$-oriented 
orthonormal frame $\cE\definedas (e_1,\ldots,e_m)$.

As a covering of smooth manifolds we define 
$P_{\Spin}(M,g,-\Or)\to  P_{\SO}(M,g,-\Or)$  as the pullback of $P_{\Spin}(M,g,\Or)\to  P_{\SO}(M,g,\Or)$ by the diffeomorphism $\rho$. Let $\tilde\rho:P_{\Spin}(M,g,-\Or)\to P_{\Spin}(M,g,\Or) $ be the diffeomorphism
defined by the following pull-back square:
\begin{center}  \begin{tikzpicture}[node distance=2cm, auto]
    \node (A) at (0,0) {$P_{\Spin}(M,g,-\Or)$};
    \node (B) at (4,0) {$P_{\Spin}(M,g,\Or)$};
    \node (C) at (0,-2) {$P_{\SO}(M,g,-\Or)$};
    \node (D) at (4,-2) {$P_{\SO}(M,g,\Or)$};
    \draw[->] (A) to node {$\tilde \rho$} (B);
    \draw[->] (C) to node {$\rho$} (D);
    \draw[->] (A) to node {} (C);
    \draw[->] (B) to node {} (D);
   \end{tikzpicture}
 \end{center}
 
However in order to get an appropriate structure as $\Spin(m)$-principal bundle on 
$P_{\Spin}(M,g,-\Or)$, some care is necessary, as $\rho$ is not $\SO(m)$-equivariant. 
If we define 
$J:=\diag(-1,1,1,1,...1)$ and abbreviate $\cE\definedas (e_1,\ldots,e_m)$, then we have $\rho(\cE)=\cE\cdot J$ and thus
%\begin{equation*}
$\rho(\cE A)=\rho(\cE)J^{-1}AJ$.
%\end{equation*}

Conjugation with $J:=J^{-1}$ is a Lie group automorphism of $\SO(m)$ and lifts to  
$\Spin(m) \subset \Cl_m$, as conjugation with $E_1 \definedas
(1,0...,0)$ in the Clifford algebra sense. For any $\witi\cE\in P_{\Spin}(M,g,-\Or)$ and $B\in \Spin(m)$
we define
\begin{equation*}
\witi\cE B
\definedas  
{\tilde\rho\,}^{-1}\Big(\tilde\rho(\witi\cE)\big(E_1B(-E_1)\big)\Big).
\end{equation*}
This definition turns  $P_{\Spin}(M,g,-\Or)$ into a $\Spin(m)$-principal bundle 
and the map 
$P_{\Spin}(M,g,-\Or)\to  P_{\SO}(M,g,-\Or)$ is then $\Spin(m)\to\SO(m)$-equivariant.

Let as before $\si:\Cl_m\to \End(\Si_m^{(\#)})$ be an irreducible representation 
of the Clifford algebra, and let $\Si^{(\#)} (M,g,\Or) \definedas P_{\Spin}(M,g,\Or)\times_\si \Si_m^{(\#)}$ with the obvious modifications for $-\Or$.

\begin{lemma}[Lift to the spinor bundle]\label{lemma.a}
The map 
\begin{equation*}
P_{\Spin}(M,g,-\Or) \times \Si_m^{(\#)} \ni 
(\witi\cE,\phi) \mapsto (\ti\rho(\witi\cE),\si(E_1)\phi)
\in P_{\Spin}(M,g,\Or) \times \Si_m^{(\#)}
\end{equation*} 
is compatible with the equivalence relation given by $\si$.
Thus
it descends to a map 
\begin{equation*}
\rho_\#:\Si^{(\#)} (M,g,-\Or)= P_{\Spin}(M,g,-\Or) \times_\si \Si_m^{(\#)}
\to 
\Si^{(\#)} (M,g,\Or).%= P_{\Spin}(M,g,\Or)\times_\si \Si_m^{(\#)}.
\end{equation*}
\end{lemma}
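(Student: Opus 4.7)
The plan is a direct verification that the map $\Phi:(\widetilde\cE,\phi)\mapsto (\widetilde\rho(\widetilde\cE),\sigma(E_1)\phi)$ respects the associated-bundle equivalence relation. Recall that $\Sigma^{(\#)}(M,g,\pm\Or)=P_{\Spin}(M,g,\pm\Or)\times_\sigma\Sigma_n^{(\#)}$ is the quotient by $(pB,\phi)\sim(p,\sigma(B)\phi)$. I therefore need to show that $\Phi(\widetilde\cE\cdot B,\phi)$ and $\Phi(\widetilde\cE,\sigma(B)\phi)$ define the same class in $\Sigma^{(\#)}(M,g,\Or)$ for every $B\in\Spin(m)$, where the action on the left is the twisted one defined just above the lemma.

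I would first unwind the twisted action. By construction $\widetilde\rho(\widetilde\cE\cdot B)=\widetilde\rho(\widetilde\cE)\cdot\alpha(B)$ with $\alpha(B):=E_1 B(-E_1)$. Hence
\[
\Phi(\widetilde\cE\cdot B,\phi)=(\widetilde\rho(\widetilde\cE)\cdot\alpha(B),\,\sigma(E_1)\phi),
\]
whose class in $\Sigma^{(\#)}(M,g,\Or)$ equals that of $(\widetilde\rho(\widetilde\cE),\sigma(\alpha(B)\cdot E_1)\phi)$. On the other side, $\Phi(\widetilde\cE,\sigma(B)\phi)=(\widetilde\rho(\widetilde\cE),\sigma(E_1 B)\phi)$. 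Compatibility therefore reduces to the single algebraic identity $\alpha(B)\cdot E_1=E_1 B$ in $\Cl_m$, and this is immediate from $(-E_1)\cdot E_1=-E_1^2=1$, using the convention $E_i^2=-1$ consistent with the paper's normalization $(\omega_n^\C)^2=1$.

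The main (and essentially only) obstacle is notational bookkeeping: keeping straight which side $\Spin(m)$ acts on, the sign in $\alpha(B)$, and the fact that the twisted $\Spin(m)$-action on $P_{\Spin}(M,g,-\Or)$ was designed precisely so that the one-line identity above holds. Once these conventions are in place, the verification is a three-line manipulation in the Clifford algebra, and $\rho_\#$ descends by the universal property of the associated-bundle quotient. No further analytic input is required, because $\widetilde\rho$ is a diffeomorphism and $\sigma(E_1)$ is a fixed $\C$-linear automorphism of $\Sigma_n^{(\#)}$, so smoothness of the descended map is automatic.
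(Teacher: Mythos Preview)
Your proof is correct and follows essentially the same route as the paper's own argument: both verify directly that the image of an equivalent pair is equivalent, reducing the check to the Clifford identity $E_1 B(-E_1)\cdot E_1=E_1 B$, which follows from $(-E_1)E_1=1$. The only cosmetic difference is that the paper starts from the pair $(\widetilde\cE B,\sigma(B^{-1})\phi)\sim(\widetilde\cE,\phi)$ rather than $(\widetilde\cE B,\phi)\sim(\widetilde\cE,\sigma(B)\phi)$, but this is the same computation up to relabeling.
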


\begin{proof}
 $(\witi\cE B,\si(B^{-1})\phi)$ is mapped to 
\begin{equation*}
\Big(\ti\rho(\witi\cE B),\si(E_1)\si(B^{-1})\phi\Big)
=  
\Big(\ti\rho(\witi\cE)\bigl(E_1B(-E_1)\bigr), \si\bigl((E_1B(-E_1))^{-1}\bigr)\si(E_1)\phi\Big).
\end{equation*} 
This pair is equivalent to $\bigl(\ti\rho(\witi\cE), \si(E_1)\phi\bigr)$ which is the image of  $\bigl(\witi\cE,\phi\bigr)$. 
\end{proof}

\begin{rem}
For future publications it might be helpful here to briefly discuss, what happens if apply this change of orientation twice. Obviously, by replacing $\Or$ by $-\Or$ we get another map 
$\rho_\#:\Si^{(\#)} (M,g,\Or)\to \Si^{(\#)} (M,g,-\Or)$. It is easy to check that we then have 
$(\rho_\#)^2=-\id$.
\end{rem}

In the following sections of an associated vector bundle $V=P\times_\rho W$ --- where~$P$ is a principal $G$-bundle and where~$W$ is a $G$-representation --- 
are written as an equivalence class $[A,w]$ of the pair $(A,w)$ with respect to the action of $\rho:G\to \GL(W)$. 
Here $A$ is a local section of $P$ and $w$ a locally defined function $M\to W$.

\begin{lemma}[Compatibility with the Clifford action]
\begin{equation*}
X\cdot \rho_\#(\phi)
=
-\rho_\#(X\cdot \phi)
\end{equation*}
for $X\in T_pM$, $\phi\in \Si_pM$. 
\end{lemma}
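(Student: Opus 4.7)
The plan is to verify the identity pointwise by unpacking both sides in terms of representative frames, so that the statement reduces to a single identity in the Clifford algebra $\CCl_m$. Pick a local $(-\Or)$-oriented orthonormal frame $\cE = (e_1,\ldots,e_m)$ near $p$ and a spin lift $\witi\cE$ of it. Write $\phi = [\witi\cE, w]$ with $w \in \Sigma_m^{(\#)}$ and expand $X = \sum_i x_i e_i$. By definition of Clifford multiplication on the associated bundle,
$$X\cdot\phi = \bigl[\witi\cE,\,\si\bigl(\textstyle\sum_i x_i E_i\bigr)w\bigr],$$
so applying $\rho_\#$ yields
$$\rho_\#(X\cdot\phi) = \bigl[\ti\rho(\witi\cE),\,\si(E_1)\,\si\bigl(\textstyle\sum_i x_i E_i\bigr)w\bigr] = \bigl[\ti\rho(\witi\cE),\,\si\bigl(E_1\cdot\textstyle\sum_i x_i E_i\bigr)w\bigr].$$

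Next I would compute $X\cdot\rho_\#(\phi)$. The key observation is that $\ti\rho(\witi\cE)$ covers $\rho(\cE)=(-e_1,e_2,\ldots,e_m)$, which is an $\Or$-oriented frame; hence, relative to this frame, the components of $X$ are $(-x_1, x_2,\ldots,x_m)$. Therefore
$$X\cdot\rho_\#(\phi) = \bigl[\ti\rho(\witi\cE),\,\si\bigl(-x_1E_1+\textstyle\sum_{j\ge 2} x_j E_j\bigr)\si(E_1)w\bigr] = \bigl[\ti\rho(\witi\cE),\,\si\bigl((-x_1E_1+\textstyle\sum_{j\ge 2}x_j E_j)\,E_1\bigr)w\bigr].$$

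The remaining task is the purely algebraic identity
$$\bigl(-x_1E_1+\textstyle\sum_{j\ge 2}x_j E_j\bigr)\,E_1 = -E_1\cdot\bigl(\textstyle\sum_{i}x_i E_i\bigr)\quad\text{in } \CCl_m.$$
This is immediate from $E_1^2=-1$ and $E_jE_1=-E_1E_j$ for $j\ge 2$: the left-hand side equals $x_1-\sum_{j\ge 2}x_j E_1E_j$, while the right-hand side equals $-\bigl(-x_1+\sum_{j\ge 2}x_j E_1E_j\bigr)=x_1-\sum_{j\ge 2}x_jE_1E_j$. Combining this with the two expressions above gives the desired identity $X\cdot\rho_\#(\phi)=-\rho_\#(X\cdot\phi)$, and since the statement is fiberwise the choice of $\witi\cE$ was irrelevant.

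I do not expect any real obstacle here: the whole content is the anticommutation of $E_1$ with the other generators (together with $E_1^2=-1$), which exactly accounts for both the insertion of $\si(E_1)$ in the definition of $\rho_\#$ and the sign flip in the first component caused by the orientation-reversing map $\rho$. The only care needed is to keep track of which frame is being used on each side, since the same tangent vector $X$ has different coordinate representations with respect to $\cE$ and to $\rho(\cE)$.
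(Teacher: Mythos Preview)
Your proof is correct and follows essentially the same approach as the paper: both reduce the statement to the Clifford-algebra identity $(Jv)\cdot E_1 = -E_1\cdot v$, where $Jv = (-x_1,x_2,\ldots,x_m)$ encodes the coordinate change of $X$ under the orientation-reversing frame map $\rho$. The paper simply packages your coordinate computation using the notation $J=\diag(-1,1,\ldots,1)$ and the associated-bundle description of $TM$, but the content is identical.
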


In particular, this lemma implies that although $\rho_\#$ yields an isomorphism between spinor bundles for different orientations, it does not yet have the properties that we request for $\Psi$.
 
\begin{proof}
We view $TM$ as an associated bundle to $P_{\Spin}(M,g,\pm\Or)$ where either sign yields a possible description.
In these descriptions $([\witi\cE,v])$ and $[\ti\rho(\witi\cE), Jv]$ represent the same vector. 
Thus
\begin{equation*}
\begin{split}
  [\witi\cE,v]\cdot \rho_\#([\witi\cE,\phi])
   &= [\ti\rho(\witi\cE), Jv)] \cdot [\ti\rho(\witi\cE), \si(E_1)\phi] \\
   &= [\ti\rho(\witi\cE), \si(Jv)\si(E_1)\phi] \\
   &=[\ti\rho(\witi\cE), -\si(E_1)\si(v)\phi]\\
   &= -\rho_\#([\witi\cE,v]\cdot [\witi\cE,\phi]).
\end{split}
\end{equation*}
Here we used that $(Jv)\cdot E_1 = - E_1 \cdot v$ in $\Cl_m$.
\end{proof}

\begin{lemma}
Let $X\in T_pM$, $\phi\in \Gamma(\Si (M,g,-\Or))$. 
Then
\begin{equation}\label{rho.parallel}
\na_{X}\rho_\#(\phi)=\rho_\#(\na_X\phi).
\end{equation}
\end{lemma}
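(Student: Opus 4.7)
The plan is to work locally and reduce the statement to a short Clifford-algebra computation. The underlying conceptual input is that the Levi-Civita connection of $(M,g)$ depends only on the metric and not on the orientation, so the induced spin connections on $P_{\Spin}(M,g,\Or)$ and on $P_{\Spin}(M,g,-\Or)$ agree ``up to relabeling''. The map $\ti\rho$ is precisely the relabeling: it covers $\rho$, which multiplies frames on the right by $J = \diag(-1,1,\ldots,1)$, and which is lifted at the $\Spin$-level by conjugation with $E_1 \in \Cl_m$. Built into that definition is the fact that $\ti\rho$ sends horizontal vectors to horizontal vectors, and \eqref{rho.parallel} is exactly the infinitesimal version of the resulting statement that $\ti\rho$ takes $\na$-parallel sections along curves to $\na$-parallel sections.

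To make this precise, I would pick a local section $\ti\cE : U \to P_{\Spin}(M,g,-\Or)$ covering an orthonormal frame $\cE = (e_1, \ldots, e_m)$, write $\phi = [\ti\cE, \phi_0]$ for some $\phi_0 : U \to \Si_m^{(\#)}$, and expand the standard formula $\na_X \phi = [\ti\cE,\, X(\phi_0) + \sigma(\omega^{\ti\cE}(X))\phi_0]$ where $\omega^{\ti\cE}$ is the $\spin(m)$-valued Levi-Civita connection form in the section $\ti\cE$. On the other side, Lemma~\ref{lemma.a} gives $\rho_\#(\phi) = [\ti\rho(\ti\cE), \sigma(E_1)\phi_0]$, and the corresponding formula yields $\na_X \rho_\#(\phi) = [\ti\rho(\ti\cE),\, \sigma(E_1)X(\phi_0) + \sigma(\omega^{\ti\rho(\ti\cE)}(X))\sigma(E_1)\phi_0]$. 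The central computation is then to identify $\omega^{\ti\rho(\ti\cE)}(X) = E_1\,\omega^{\ti\cE}(X)\,E_1^{-1}$ in $\Cl_m$. This should follow because at the $\SO$-level the Christoffel forms of the section $\rho(\cE) = \cE \cdot J$ are obtained from those of $\cE$ by the constant gauge transformation $B \mapsto J^{-1}BJ$, and under the identification of $\spin(m)$ with the Lie algebra of degree-$2$ elements in $\Cl_m$ this $J$-conjugation lifts to $E_1$-conjugation. Once that is in hand, $\sigma(E_1\,\omega^{\ti\cE}(X)\,E_1^{-1})\,\sigma(E_1) = \sigma(E_1)\,\sigma(\omega^{\ti\cE}(X))$ (the two factors of $E_1$ cancel using $E_1^{-1}\cdot E_1 = 1$), and pulling $\sigma(E_1)$ out of both summands matches $\rho_\#(\na_X \phi)$ term by term.

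The main obstacle I anticipate is not conceptual but rather the sign bookkeeping around the non-standard $\Spin(m)$-action on $P_{\Spin}(M,g,-\Or)$ defined by $\witi\cE B = \ti\rho^{-1}(\ti\rho(\witi\cE)(E_1 B(-E_1)))$. One must verify carefully that the Levi-Civita connection form in a section is still computed from the naive formula in terms of Christoffel symbols with respect to this twisted right action, and that the change of section from $\ti\cE$ to $\ti\rho(\ti\cE)$ really induces conjugation of $\omega$ by $E_1$ (and not by $-E_1$ or some other sign-modified variant). A cleaner but essentially equivalent alternative would be to prove directly that $\ti\rho$ is connection-preserving at the level of horizontal distributions on the spin bundles and then deduce \eqref{rho.parallel} without choosing a local frame at all.
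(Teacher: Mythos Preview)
Your approach is correct and essentially the same as the paper's. The paper works directly with the principal connection $1$-forms on the total spaces, establishing $\omega((d\rho)(Y)) = J\,\omega(Y)\,J$ at the $\SO$-level and then its lift $\tilde\omega((d\tilde\rho)(\tilde Y)) = -E_1\,\tilde\omega(\tilde Y)\,E_1$ (equivalently $E_1\,\tilde\omega(\tilde Y)\,E_1^{-1}$) at the $\Spin$-level; you carry out the identical computation after pulling back along a local section, arriving at $\omega^{\tilde\rho(\tilde\cE)}(X) = E_1\,\omega^{\tilde\cE}(X)\,E_1^{-1}$, which is the same identity. Your worry about the twisted $\Spin(m)$-action is legitimate bookkeeping but does not affect the outcome: since $J$ is constant, the gauge-transformation term $J^{-1}dJ$ vanishes and the $\mathfrak{so}(m)$-valued forms in the two frames differ exactly by $J$-conjugation, which lifts to $E_1$-conjugation in $\spin(m)\subset\Cl_m$ as you say.
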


\begin{proof}
The differential of $\rho:P_{\SO}(M,g,-\Or)\to P_{\SO}(M,g,\Or)$ maps $TP_{\SO}(M,g,-\Or)$ to $TP_{\SO}(M,g,\Or)$. 
The connection $1$-form $\om:\SO(M)\to \so(n)$ then pulls
back according to 
\begin{equation*}
\om((d\rho)(Y)) = J \om(Y) J
\end{equation*}
for $Y\in T_\cE P_{\SO}(M,g,-\Or)$, a lift of $X\in TM$ under the projection
$ P_{\SO}(M,g,-\Or)\to M$. We lift this to a connection $1$-form 
$\ti\om: P_{\Spin}(M,g,-\Or)\to \spin(m)\subset \Cl_m$ which thus transforms as
\begin{equation*}
\ti\om ((d\ti\rho)(\ti Y))
= -E_1 \om(\ti Y) E_1
\end{equation*}
where $\ti Y\in T P_{\Spin}(M,g,-\Or)$ is a lift of $Y$. And this induces the
relation \eqref{rho.parallel}.
\end{proof}
Note that obviously $\rho_\#$ is isometric in each fiber.

Now finally in order to get a map $\Psi$ as in the proposition, we will compose
$\rho_\#$ with a further bundle isomorphism. 
We define $\hat\sigma(X)\definedas -\sigma(X)$ for any $X\in \R^m$. Obviously, $\hat\sigma$ satisfies the Clifford relations, and thus $(\Sigma_m^{(\#)},\hat\sigma)$ describes a representation of $\Cl_m$ which is due to its dimension irreducible. 
The classification of such representations implies that this representation is isomorphic to either $(\Sigma_m,\sigma)$ or (possibly in the case $m$ odd) $(\Sigma_m^\#,\sigma)$. 
If $m$ is even, then we obtain a complex 
vector space isomorphism $K\in \End(\Sigma_m)$ satisfying
\begin{equation}\label{K.antikomm}
K(\sigma(X)\phi)=-\sigma(X)K(X)
\end{equation}
for all $X\in \R^m$ and $\phi\in \Sigma_m$. We can choose $K$ to be isometric.
Similarly, by checking the effect of Clifford multiplication by the volume element we obtain for $m$ odd isometric complex isomorphisms
$K:\Sigma_m\to \Sigma_m^\#$ and $K:\Sigma_m^\#\to \Sigma_m$ satisfying \eqref{K.antikomm} for $X\in\R^m$ and $\phi\in \Sigma_m$ resp.\ $\phi\in \Sigma_m^\#$.
The associated map defined by $K$ defines parallel, fiberwise
isometric complex linear isomorphisms of vector bundles over $\id_M$
 $$K:\Sigma(M,g,\Or)\to \Sigma(M,g,\Or)$$
for $m$ even and 
 $$K:\Sigma(M,g,\Or)\to \Sigma^\#(M,g,\Or),\quad K:\Sigma^\#(M,g,\Or)\to \Sigma(M,g,\Or)$$
for $m$ odd, satisfying \eqref{K.antikomm} for $X\in T_pM$ and $\phi\in \Sigma^{(\#)}(M,g,\Or)$. 

The composition $\Psi:=K\circ \rho_\#$ now satisfies all properties requested in the proposition.

\begin{rem}Let $m$ be even. Recall that a spinor $\phi$ is called positive resp.\ negative
if $\om_\C\phi=\phi$ resp.\ $\om_\C\phi=-\phi$.  
It is easy to check that $\rho_\#$ and $\Psi$  map positive spinors to negative ones and vice versa, while $K$ preserves positivity and negativity of spinors.
\end{rem}

The proof of Proposition~\ref{prop.change.or} is thus complete.

%%%%%%%%%%%%%%%%%%%%%%%%%%%%%%%%%%%%%%%%%%%%%%%%%%%%%%%%%%%%%%%%
%%%%%%%%%%%%%%%%%%%%%%%%%%%%%%%%%%%%%%%%%%%%%%%%%%%%%%%%%%%%%%%%
\section{Making paths of metrics divergence free}\label{app.div.free}
%%%%%%%%%%%%%%%%%%%%%%%%%%%%%%%%%%%%%%%%%%%%%%%%%%%%%%%%%%%%%%%%
%%%%%%%%%%%%%%%%%%%%%%%%%%%%%%%%%%%%%%%%%%%%%%%%%%%%%%%%%%%%%%%%
In this appendix we show the following well-known lemma.

\begin{lemma}
Let $g_s$, $s\in [0,\ell]$ be a path of Riemannian metrics on a closed manifold $M$. We assume that the dimension of the space of Killing vector fields of $(M,g_s)$ does not depend on $s$.
Then there exists a family of 
diffeomorphisms~$\phi_s:M\to M$, depending smoothly on $s\in [0,\ell]$, $\phi_0=\id_M$, such that $\tilde g_s:=\phi_s^* g_s$ satisfies for all $s\in [0,\ell]$:
\begin{equation}\label{div.equation}
\divv^{\tilde g_s} \frac{d}{ds} \tilde g_s=0.
\end{equation}
\end{lemma}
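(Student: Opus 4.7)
The plan is to look for $\phi_s$ as the flow of a time-dependent vector field $Y_s$ on $M$, converting the condition \eqref{div.equation} into a linear elliptic PDE for $Y_s$ at each fixed $s$, and then integrating. Writing $\frac{d\phi_s}{ds}(x)=Y_s(\phi_s(x))$ with $\phi_0=\id_M$, one has, for any $s$-dependent tensor field $T_s$,
$$\frac{d}{ds}(\phi_s^* T_s)=\phi_s^*\bigl(\dot T_s+\mathcal{L}_{Y_s}T_s\bigr),$$
and combining this with the naturality relation $\divv^{\phi_s^* g_s}(\phi_s^* h)=\phi_s^*(\divv^{g_s} h)$, the requirement \eqref{div.equation} on $\tilde g_s=\phi_s^* g_s$ becomes equivalent to
$$\divv^{g_s}\mathcal{L}_{Y_s} g_s=-\divv^{g_s}\dot g_s$$
on the original metrics, for every $s\in[0,\ell]$.

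For fixed $s$, the operator $L_s\colon Y\mapsto \divv^{g_s}\mathcal{L}_Y g_s$ is a second-order, formally self-adjoint, elliptic operator on $\Gamma(TM)$ whose kernel is exactly the space of $g_s$-Killing vector fields, since integration by parts gives
$$\int_M\langle L_s Y,Y^\flat\rangle\,dV_{g_s}=-\tfrac{1}{2}\int_M|\mathcal{L}_Y g_s|^2\,dV_{g_s}.$$
By the Fredholm alternative, solvability of the equation for $Y_s$ reduces to $L^2$-orthogonality of the right-hand side against Killing fields, and this is automatic: for any $g_s$-Killing field $K$,
$$\int_M\langle\divv^{g_s}\dot g_s,K^\flat\rangle\,dV_{g_s}=-\tfrac{1}{2}\int_M\langle\dot g_s,\mathcal{L}_K g_s\rangle\,dV_{g_s}=0.$$
Thus a solution $Y_s$ exists pointwise in $s$, well-defined modulo the addition of a Killing vector field.

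To upgrade this pointwise existence to a smooth family $s\mapsto Y_s$ I would invoke the assumption that $\dim\ker L_s$ is constant on $[0,\ell]$. By standard perturbation theory for smooth families of self-adjoint elliptic operators with constant-dimensional kernel (Riesz-projector / Kato-type arguments), the $L^2$-orthogonal projection onto $\ker L_s$ and the Green operator $G_s\colon (\ker L_s)^\perp\to(\ker L_s)^\perp$ both depend smoothly on $s$. Setting $Y_s:=-G_s\bigl(\divv^{g_s}\dot g_s\bigr)$ then defines a smooth time-dependent vector field on the compact manifold $M$, and its flow starting at $\id_M$ exists on all of $[0,\ell]$ and yields the desired family $\phi_s$.

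I expect the main obstacle to be arranging this smooth $s$-dependence, rather than the pointwise PDE solution: because $Y_s$ is only determined up to a Killing field, an arbitrary pointwise choice of solution need not vary smoothly in $s$, and if $\dim\ker L_s$ jumped this could genuinely fail. The constant-dimension hypothesis is precisely what makes the pseudoinverse $G_s$ smooth in $s$, and it is also exactly the regularity one loses in general premoduli-space situations, which is why it has to be imposed as a hypothesis.
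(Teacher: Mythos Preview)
Your argument is correct and follows essentially the same route as the paper: reduce to the flow of a time-dependent vector field, rewrite the divergence-free condition as a self-adjoint elliptic equation $\divv^{g_s}(\divv^{g_s})^*\alpha_s=\tfrac12\divv^{g_s}\dot g_s$ whose kernel is the Killing fields, and observe that the right-hand side lies in the image. You are in fact more explicit than the paper about where the constant-dimension hypothesis enters, namely in guaranteeing smooth $s$-dependence of the Green operator and hence of $Y_s$; the paper simply asserts the existence of the unique solution orthogonal to Killing fields and leaves the smoothness in $s$ implicit.
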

\begin{proof}We make the following ansatz. 
Let $X_s\in \Gamma(TM)$ be a vector field
smoothly depending on the parameter $s\in [0,\ell]$. Let $\phi_s$ be the flow generated by $X_s$, i.e.\ 
  $$\frac{d}{ds}\phi_s(x)= X_s\big|_{\phi_s(x)}.$$
For this we define $h_s:= \frac{d}{ds} g_s$. Then 
$$\frac{d}{ds} \tilde g_s=\phi_s^* \left(\mathcal{L}_{X_s}g_s + h_s\right),$$
where $\mathcal{L}_{X_s}$ denotes the Lie derivative in the direction of $X_s$.
Thus \eqref{div.equation} is equivalent to 
  $$\divv^{g_s}  \left(\mathcal{L}_{X_s}g_s + h_s\right)=0.$$
Now let $(\divv^{g_s})^*:\Omega^1(M)\to \Gamma(T^*M\odot T^*M)$ be the adjoint of $\divv^{g_s}$. Then, see \cite[Lemma~1.60]{besse:87}, 
  $$\mathcal{L}_{X}g = -2 (\divv^{g})^*(X^b)$$
for all $X\in \Gamma(TM)$ and Riemannian metrics $g$, where $X^b:=g(X,\,\cdot\,)$. We thus see that \eqref{div.equation} is in fact equivalent to 
\begin{equation}\label{div.equation.equiv2}
2\divv^{g_s} (\divv^{g_s})^*\alpha_s= \divv^{g_s} h_s.
\end{equation}
where we set $\alpha_s:=g_s(X_s,\,\cdot\,)$.
By calculation the principal symbol of $P_s:=\divv^{g_s} (\divv^{g_s})^*$ one sees 
that $P_s$ is a self-adjoint elliptic operator, thus has discrete (non-negative) spectrum.
We have $\mathrm{ker}(P_s)=\mathrm{ker}((\divv^{g_s})^*)$. Thus again by   
 \cite[Lemma~1.60]{besse:87} the kernel of $P_s$ is the space of all Killing vector fields of $(M,g_s)$. We furthermore have
 $$\mathrm{im}(P_s)=\mathrm{ker}(P_s)^{\perp}=\mathrm{ker}(\divv^{g_s})^*)^{\perp}=\mathrm{im}(\divv^{g_s}).$$ thus \eqref{div.equation.equiv2} has a unique solution $\alpha_s$ that is $L^2$-orthogonal to any Killing vector field.
  By assumption, the dimension of $\mathrm{ker}(P_s)$ is constant. Thus, the spaces $\mathrm{im}(P_s)$ form a smooth family of isomorphic vector spaces and we have a smooth family of isomorphisms $P_s$ on $\mathrm{im}(P_s)$.
Thus, $\alpha_s$ and hence also $X_s$ and $\phi_s$ depend smoothly on~$s$. This solves the problem.
\end{proof}

Note that we apply the above theorem to a family of Ricci-flat metrics. On closed Ricci-flat Riemannian manifolds every Killing vector field is parallel. Furthermore $X$ is then parallel if and only if $X^b$ is harmonic. Thus the dimension
of the space of Killing vector fields is the first Betti-number and thus independent of $s$.

%\bibliographystyle{acm}
%\bibliographystyle{abbrv}
%\bibliographystyle{ammann}
%\bibliographystyle{bernd}

%\bibliography{special-imag-killing}

\end{document}